\setlist[enumerate,1]{label={\normalfont(\roman*)}}
\numberwithin{equation}{section}
\newcommand{\R}{\mathbf{R}}
\newcommand{\Z}{\mathbf{Z}}
\newcommand{\RN}{\mathbf{R}^N}
\newcommand{\e}{\varepsilon}
\newcommand{\be}{\bar{\e}}
\newcommand{\rd}{\mathrm{d}}
\newcommand{\cD}{\mathcal{D}}
\newcommand{\cL}{\mathcal{L}}
\newcommand{\cO}{\mathcal{O}}
\newcommand{\scrF}{\mathscr{F}}
\newcommand{\la}{\left\langle}
\newcommand{\ra}{\right\rangle}
\newcommand{\RE}{\mathrm{Re}\,}
\newcommand{\PV}{\mathrm{P.V.}}
\newcommand{\cDsr}{\mathcal{D}^s_{\rm rad}(\RN)}
\newcommand{\Hsr}{H^s_{\rm rad}}
\def\ov#1{\overline{#1}}
\def\wt#1{\widetilde{#1}}
\def\cDs#1{\mathcal{D}^s(#1)} 
\newcommand{\PSPc}{${\rm (PSP)}_c$}
\theoremstyle{definition}
\newtheorem{definition}{Definition}[section]
\theoremstyle{plain}
\newtheorem{theorem}[definition]{Theorem}
\newtheorem{lemma}[definition]{Lemma}
\newtheorem{proposition}[definition]{Proposition}
\theoremstyle{remark}
\newtheorem{remark}[definition]{Remark}
\title[]{Multiplicity of radial and nonradial solutions to equations with fractional operators}
\author[]{Norihisa Ikoma}
\date{\today}
\address{Department of Mathematics,
	Faculty of Science and Technology,
	Keio University, 
	Yagami Campus: 3-14-1 Hiyoshi, Kohoku-ku, 
	Yokohama, 2238522, JAPAN}
\email{ikoma@math.keio.ac.jp}
\subjclass[2010]{35J20, 35J60}
\keywords{radial solutions, nonradial solutions, least energy solution, symmetric mountain pass thoerem, fractional operators, 
zero mass case, positive mass case}
\begin{document}
\maketitle
\begin{abstract}
In this paper, we study the existence of radial and nonradial solutions 
to the scalar field equations with fractional operators. 
For radial solutions, we prove the existence of infinitely many solutions under $N \geq 2$. 
We also show the existence of least energy solution (with the Pohozaev identity) 
and its mountain pass characterization. 
For nonradial solutions, we prove the existence of at least one nonradial solution 
under $N \geq 4$ and infinitely many nonradial solutions under either $N =4$ or $N \geq 6$. 
We treat both of the zero mass and the positive mass cases. 
\end{abstract}

	\section{Introduction}\label{section:1}


In this paper, we discuss the existence of radial and nonradial solutions of 
	\begin{equation}\label{eq:1.1}
		(-\Delta)^s u = f(u) \quad {\rm in} \ \RN, \quad 
		u \in \cDs{\RN}
	\end{equation}
and 
	\begin{equation}\label{eq:1.2}
		(-\Delta + a )^s u = f(u) \quad {\rm in} \ \RN, \quad u \in H^s(\RN).
	\end{equation}
Here  $N \geq 2$, $0<s<1$ and $a \geq 0$, and 
the fractional operators are defined by
	\[
		\left( - \Delta + a \right)^s u (x) := 
		\scrF^{-1} \left( \left( 4\pi^2|\xi|^2 + a \right)^s \scrF u (\xi) \right), 
		\quad 
		\scrF u (\xi) := \int_{  \RN } e^{-2\pi i x \cdot \xi} u(x) \rd x.
	\]
For \eqref{eq:1.1}, we deal with the zero mass case, namely, 
we impose the conditions (f0)--(f3) below on the nonlinearity $f(t)$. 
On the other hand, for \eqref{eq:1.2}, we consider the positive mass case, 
that is, the nonlinearity $f(t)$ satisfying (F0)--(F3).

	Next, we give some remarks about the fractional operators and 
the function spaces. First, for Schwartz functions $u$ and $v$, we may check that 
(see Di Nezza, Palatucci and Valdinoci \cite{DNGE-12}, for instance)
	\[
		\begin{aligned}
			(-\Delta)^s u (x) &= C_{N,s} \PV \int_{\RN} 
			\frac{u(x)- u(y)}{|x-y|^{N+2s} } \rd y 
			= -\frac{C_{N,s}}{2} \int_{\RN} \frac{u(x+y)+u(x-y) - 2u(y) }{|y|^{N+2s}} \rd y
			,\\
			\int_{  \RN } \left[\left( - \Delta \right)^s u \right] v \rd x 
			&=
			\frac{C_{N,s}}{2}\int_{\RN} \frac{\left( u(x) - u(y) \right) \left( v(x) - v(y) \right) }{|x-y|^{N+2s}} \rd x \rd y 
			= : \la u, v \ra_{\cDs{\RN}},
			\\
			\int_{  \RN } \left[ \left( -\Delta + a  \right)^s u \right] v \rd x 
			&= \RE \int_{\RN} \left( 4\pi^2 |\xi|^2 + a \right)^s 
			\scrF u (\xi) \ov{\scrF v (\xi)} \rd \xi 
			=: \la u , v \ra_{s,a}.
		\end{aligned}
	\]
where 
	\[
		C_{N,s}^{-1} := \int_{\RN} \frac{1 - \cos (y_1)}{|y|^{N+2s}} \rd y.
	\]
Now we define $\cDs{\RN}$ and $H^s(\RN)$ by
	\[
		\begin{aligned}
			\cDs{\RN} &:= 
			\Set{ u \in L^{2^\ast_s} (\RN) 
				|
			\| u \|_{\cDs{\RN}}^2 := \la u , u \ra_{\cDs{\RN}}  < \infty } 
		\quad \text{where} \ 2^\ast_s := \frac{2N}{N-2s}, 
			\\
			H^s(\RN) &:= 
			\Set{ u \in L^2(\RN) 
				|
			\| u \|_{s,a}^2 := \la u, u \ra_{s,a}^2 < \infty.
		 }
		\end{aligned}
	\]
Remark that due to Sobolev's inequality, 
$\cD^s(\RN)$ and $H^s(\RN)$ are Hilbert spaces 
and $C^\infty_0(\RN)$ is dense in $\cD^s(\RN)$ and $H^s(\RN)$.

	Let us introduce the conditions for $f(t)$ in \eqref{eq:1.1} and \eqref{eq:1.2}. 
For \eqref{eq:1.1}, we deal with the zero mass case and suppose
	\begin{enumerate}
	\item[(f0)] 
		$f \in C(\R)$ and $f(-t) = - f(t)$ for $t \in \R$. 
	\item[(f1)] 
		\[
			- \infty \leq 
			\limsup_{t \to 0} \frac{f(t)}{|t|^{2^\ast_s - 2}t } \leq 0.
		\]
	\item[(f2)] 
		\[
			- \infty \leq \limsup_{t \to \infty} \frac{f(t)}{|t|^{2^\ast_s - 2}t} \leq 0.
		\]
	\item[(f3)] 
		There exists a $\zeta_1>0$ such that $F(\zeta_1) > 0$ 
		where $F(t) := \int_0^t f(\tau) \rd \tau$. 
\end{enumerate}
A simple example satisfying (f0)-(f3) is 
$f(t) = - |t|^{p-1} t + \sum_{i=1}^k a_i |t|^{p_i-1}t + |t|^{q-1} t$ where 
$0<p < p_1 < p_2 < \cdots < p_k <q<2^\ast_s - 1$ and $a_i \in \R$ $(1 \leq i \leq k)$.

	For \eqref{eq:1.2}, assume 
\begin{enumerate}
	\item[(F0)] 
		$f \in C(\R)$ and $f(-t) = - f(t)$ for $t \in \R$. 
	\item[(F1)] 
		\[
			- \infty < \liminf_{t \to 0} \frac{f(t)}{t} \leq \limsup_{t \to 0} \frac{f(t)}{t} < a^s .
		\]
	\item[(F2)] 
		\[
			- \infty \leq \limsup_{t \to \infty} \frac{f(t)}{|t|^{2^\ast_s-2} t } \leq 0.
		\]
	\item[(F3)] 
		There exists a $\zeta_0>0$ such that $F(\zeta_0) - \frac{a^s}{2} \zeta_0^2 > 0$. 
\end{enumerate}
An example satisfying (F0)--(F3) is 
$f(t) = - t + \sum_{i=1}^k a_i |t|^{p_i-1} t + |t|^{q-1}$ where $1 < p_1 < p_2 < \cdots < p_k < q < 2^\ast_s-1$ 
and $a_i \in \R$ ($1 \leq i \leq k$).

	Before proceeding, we give one remark about the notion of solutions of \eqref{eq:1.1} and \eqref{eq:1.2}. 
Formally, \eqref{eq:1.1} is variational, namely, a critical point of 
	\[
		I_0(u) := \frac{1}{2} \| u \|_{\cDs{\RN}}^2 - \int_{\RN} F(u) \rd x
	\]
corresponds to solutions of \eqref{eq:1.1}. However, (f0)--(f3) do not ensure that 
$F(u) , f(u) v \in L^1(\RN)$ for every $u,v \in \cDs{\RN}$, 
hence, the case $I_0 \not\in C^1(\cDs{\RN},\R)$ may occur. 
Therefore, in this paper, solutions of \eqref{eq:1.1} mean weak solutions, 
that is, $u \in \cDs{\RN}$ satisfying 
	\[
		\la u, \varphi \ra_{\cDs{\RN}} = \int_{  \RN } f(u) \varphi \rd x \quad 
		\text{for all } \varphi \in C^\infty_0(\RN). 
	\]

	A similar argument is applied to solutions of \eqref{eq:1.2} 
since (F0)--(F3) may not imply $I_a \in C^1(H^s(\RN), \R)$ where 
	\[
		 I_a (u) := \frac{1}{2} \| u \|_{s,a}^2 
		- \int_{  \RN } F(u) \rd x.
	\]
Thus, we look for weak solutions of \eqref{eq:1.2}, namely, $u \in H^s(\RN)$ satisfying 
	\[
		\la u, \varphi \ra_{s,a} = \int_{  \RN } f(u) \varphi \rd x \quad 
		\text{for each } \varphi \in C^\infty_0(\RN) .
	\]

	Next, we mention known results related to \eqref{eq:1.1} and \eqref{eq:1.2}. 
For \eqref{eq:1.1}, we first refer to \cite{BL-83-1,BL-83-3,Stru-82,Stru-86}. In these papers, 
\eqref{eq:1.1} with $s=1$ is studied and it is shown that under (f0)--(f3) with $s=1$ and $N \geq 3$, 
\eqref{eq:1.1} has infinitely many radial solutions. 
For the existence of nonradial solutions with $s=1$, recently, Mederski \cite{M-19} proved the existence of 
at least one nonradial solution when $N \geq 4$ and 
infinitely many solutions when either $N=4$ or else $N \geq 6$. 
For the case $0<s<1$, Ambrosio \cite{Amb-18-1,Amb-18-2} showed the existence of at least one radial solution 
by assuming $N \geq 2$ and some additional conditions on $f(t)$ to (f0)--(f3).

	On the other hand, for \eqref{eq:1.2}, 
in the case $s=1$, the existence of least energy solution and infinitely many radial solutions 
were proved in \cite{BGK-83,BL-83-1,BL-83-2,S-77} under (F0)--(F3) with $s=1$. See also \cite{HIT-10,JL-18}. 
For the nonradial solutions, we first mention the result by Bartsch and Willem \cite{BW-93} in which 
they proved the existence of infinitely many nonradial solutions under $N=4$ or $N \geq 6$. 
Recently, Mederski \cite{M-17} generalized \cite{BW-93} and showed the existence of at least one 
nonradial solution for $N \geq 4$ and infinitely many solutions for $N=4$ or $N \geq 6$. 
The same result to \cite{M-17} was also proved in Jeanjean and Lu \cite{JL-18} 
via the monotonicity trick with the symmetric mountain pass theorem. 
For $ 0< s < 1$, when $a = 0$ and $a>0$, the existence of infinitely many radial solutions were proved 
in \cite{Amb-18-1,I-17,I-17-2}. We also refer to the earlier results 
\cite{CW-13,FQT-12,FV-15,Se,TWY-12} and references therein.

	Motivated by the above works, we aim to address the following questions in this paper:
	\begin{itemize}
		\item 
			the existence of infinitely many radial solutions, least energy solution 
			and  nonradial solutions of \eqref{eq:1.1}. 
			This is a fractional counterpart of \cite{BL-83-3,Stru-82,Stru-86,M-19} and 
			extends the result by \cite{Amb-18-1,Amb-18-2}. 
		\item 
			the existence of nonradial solutions for \eqref{eq:1.2}. 
			This question corresponds to \cite{M-17,JL-18}. 
	\end{itemize}

	We first establish the existence of infinitely many radial solutions of \eqref{eq:1.1}:

	\begin{theorem}\label{theorem:1.1}
		Under $N \geq 2$ and \textup{(f0)--(f3)}, 
		\eqref{eq:1.1} has infinitely many radial solutions $(u_k)_k \subset \cDs{\RN}$ 
		which satisfy $|I_0(u_k)| < \infty$ and $ \| u_k \|_{\cDs{\RN}} \to \infty$. 
	\end{theorem}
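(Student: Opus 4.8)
The plan is to realize the radial solutions as critical points of the symmetric functional obtained by restricting to the radial subspace $\cDsr$, and to apply a version of the symmetric mountain pass theorem. Because (f0)--(f3) only give the zero mass structure — in particular $I_0$ need not be $C^1$ and $F(u)$ need not be integrable — the first step is a truncation/approximation argument. Concretely, I would modify $f$ outside a large ball, replacing it by a nonlinearity $f_n$ that agrees with $f$ on $[-n,n]$, is odd, still satisfies a uniform version of (f1)--(f2), and has subcritical growth, so that the associated functional $I_{0,n}$ is $C^1$ on $\cDsr$ and the Palais--Smale condition can be verified on the radial subspace. The key analytic input here is the compactness of the embedding $\cDsr \hookrightarrow L^p(\RN)$ for the relevant subcritical exponents $p \in (2, 2^\ast_s)$ (a fractional radial-lemma / Strauss-type estimate), which is where radiality is essential.

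Next I would set up the minimax scheme. After the truncation, $I_{0,n}$ is an even $C^1$ functional on the Hilbert space $\cDsr$ with $I_{0,n}(0)=0$; condition (f3) (positivity of $F$ somewhere) together with a scaling argument $u \mapsto u(\cdot/\lambda)$ produces, for any finite-dimensional subspace, a path (indeed a whole sphere) on which $I_{0,n}$ becomes negative — this gives the geometric hypotheses of the $\Z_2$-symmetric mountain pass theorem (Ambrosetti--Rabinowitz / Bartsch), while the behavior near $0$ forced by (f1) gives a mountain-pass-type barrier on small spheres intersected with complementary subspaces. One should be slightly careful: in the pure zero mass setting the natural linking is via the genus and the "annulus" geometry coming from the competition between the lower-order negative term near $0$ and the positive bump of $F$; I would follow the Berestycki--Lions / Struwe framework adapted to the fractional Laplacian, obtaining an unbounded sequence of critical values $c_{k,n}$ of $I_{0,n}$ on $\cDsr$, with corresponding critical points $u_{k,n}$.

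Then comes the passage to the limit $n\to\infty$. For each fixed $k$, I would show the minimax levels $c_{k,n}$ stay bounded as $n \to \infty$ (comparison with the untruncated minimax values, using that $F_n \le F$ on the relevant test functions after suitable scaling), obtain uniform $\cDsr$-bounds on $u_{k,n}$, and pass to a weak limit $u_k$. A crucial point — and I expect this to be the main obstacle — is to prove an a priori $L^\infty$ bound on the radial critical points that is independent of $n$, so that for $n$ large $f_n(u_{k,n}) = f(u_{k,n})$ and $u_{k,n}$ is actually a solution of the original equation; this is where one invokes a fractional Moser/De Giorgi iteration or a regularity bootstrap together with the radial decay estimate, exploiting (f1)--(f2) to control $f(u)/|u|$ by a small multiple of $|u|^{2^\ast_s-2}$ plus a bounded term. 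Once that is in hand, the weak limit $u_k$ is a weak radial solution of \eqref{eq:1.1}; a Brezis--Lieb / concentration-compactness argument on the radial subspace rules out vanishing (using that $c_k := \lim c_{k,n} > 0$ is bounded away from $0$ for each fixed $k$, and strictly increasing in $k$), so $u_k \ne 0$, $|I_0(u_k)| < \infty$, and $\|u_k\|_{\cDs{\RN}} \to \infty$ follows from $I_0(u_k) = c_k \to \infty$ combined with the growth control on $\int F(u_k)$. Finally I would record that distinct minimax levels genuinely produce geometrically distinct solutions, completing the proof.
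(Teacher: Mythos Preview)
There are several genuine gaps in your plan, each of which the paper addresses in a way quite different from what you propose.

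First, and most seriously, the claimed compact embedding $\cDsr \hookrightarrow L^p(\RN)$ for $p\in(2,2^\ast_s)$ is simply false: $\cDsr$ embeds only into $L^{2^\ast_s}(\RN)$, not into any smaller Lebesgue space, so there is no compactness to invoke. The paper gets around this in Lemma~\ref{lemma:3.4} by a two-step argument: for a bounded PS sequence $(u_n)$ in $\cDsr$ one truncates in the \emph{range}, setting $v_n:=\chi_\eta(u_n)$ with $\chi_\eta$ supported in $[t_\eta,2T_\eta]$. Then $(v_n)$ lies in $H^s_{\rm rad}(\RN)$ (bounded range plus finite measure of $[|u_n|\geq t_\eta]$), and it is this auxiliary sequence to which the genuine compact embedding $H^s_{\rm rad}\hookrightarrow L^p$ is applied.

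Second, you do not explain how to obtain bounded Palais--Smale sequences. In the zero mass setting with only (f0)--(f3) there is no Ambrosetti--Rabinowitz condition, no monotonicity of $f(t)/t$, and the standard symmetric mountain pass gives no a priori bound. This is precisely the reason for the paper's abstract framework (Theorems~\ref{theorem:2.3}--\ref{theorem:2.4}): one works with the augmented functional $J(\theta,u)=I(T_\theta u)$ and the condition $\mathrm{(PSP)}_c$, which produces sequences with $\rd_\theta J(0,u_n)\to 0$ in addition to $\rd_u J(0,u_n)\to 0$. The former is the Pohozaev identity, and combining it with $I(u_n)\to c$ yields $s\|u_n\|_{\cDs{\RN}}^2=Nc+o(1)$, i.e.\ boundedness for free.

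Third, your truncation is in the wrong place. Modifying $f$ only outside $[-n,n]$ does not make $I_{0,n}\in C^1(\cDsr)$: condition (f1) allows $f(t)/|t|^{2^\ast_s-2}t\to-\infty$ as $t\to 0$, so $F_-(u)$ may fail to be in $L^1$ for $u\in\cDsr$. The paper instead first replaces (f2) by (f2') via a cutoff \emph{at large $t$} together with the maximum principle (Lemma~\ref{lemma:3.1}), and then modifies $f_-$ \emph{near $0$} by $f_{\e,-}(t):=\min\{f_-(t),\e^{-1}t^{2^\ast_s-1}\}$ on $[0,\xi_0]$, so that $I_\e\in C^1(\cDsr)$ with the monotonicity $I_{\e_2}\leq I_{\e_1}\leq I_0$. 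The passage to the limit is then $\e\to 0$ with uniform bounds $c_{1,k}\leq c_{\e,k}\leq \bar c_k$ coming directly from this monotonicity, rather than an $L^\infty$ bootstrap.
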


	\begin{remark}
		When $s=1$, the property $I_0(u_k) \to \infty$ as $k \to \infty$ is shown in \cite{BL-83-3,Stru-82,Stru-86}. 
		Under some additional conditions to (f0)--(f3), we may prove $I_0(u_k) \to \infty$ as $k \to \infty$. 
		See Remark \ref{remark:3.6}. 
	\end{remark}

	Next, we turn to the existence of least energy solution of \eqref{eq:1.1} and its characterization. 
To this aim, we strength (f1) and (f2) as follows:
	\begin{equation}\label{eq:1.3}
		- \infty < \liminf_{t \to 0} \frac{f(t)}{|t|^{2^\ast_s-2}t} \leq 
		\limsup_{t \to 0} \frac{f(t)}{|t|^{2^\ast_s-2}t} \leq 0, \quad 
		- \infty < \liminf_{|t| \to \infty} \frac{f(t)}{|t|^{2^\ast_s-2}t} \leq 
		\limsup_{|t| \to \infty} \frac{f(t)}{|t|^{2^\ast_s-2}t} \leq 0.
	\end{equation}
Notice that \eqref{eq:1.3} implies $I_0 \in C^1( \cDs{\RN} , \R )$. 
	\begin{theorem}\label{theorem:1.3}
		Assume $N \geq 2$, \textup{(f0)}, \eqref{eq:1.3} and  \textup{(f3)}. 
		Then there exists a $u_0 (x) = u_0(|x|) \in \cD^s(\RN)$ such that 
			\begin{equation}\label{eq:1.4}
				0< c_{mp} =  I_0(u) = \inf \left\{ I_0(v) \ |\ 
				\text{$v \in \cD^s(\RN) \setminus \{0\} $ is a solution of \eqref{eq:1.1} with $P_0(v) = 0$
				}
				\right\}
			\end{equation}
		where 
			\[
				\begin{aligned}
					c_{mp} &:= \inf_{ \gamma \in \Gamma} \max_{0 \leq t \leq 1} I_0(\gamma (t)), 
					\quad 
					\Gamma := \Set{ \gamma \in C([0,1], \cDsr ) | \gamma (0) = 0, \ I_0(\gamma (1) ) < 0  },
					\\
					\cDsr &:= \Set{ u \in \cDs{\RN} | u(x) = u(|x|) }, \quad 
					P_0(u) := \frac{N-2s}{2} \| u \|_{\cDs{\RN}}^2 - N \int_{  \RN } F(u) \rd x.
				\end{aligned}
			\]
	\end{theorem}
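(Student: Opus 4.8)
The plan is to realize $c_{mp}$ as a genuine minimax level, produce a Palais--Smale sequence at that level by a deformation/monotonicity argument, and then recover compactness after passing to radial symmetrizations and a suitable rescaling, exactly as in the classical Berestycki--Lions--Struwe theory adapted to the fractional $\cD^s$ setting. First I would show, using \eqref{eq:1.3}, that $I_0 \in C^1(\cDs{\RN},\R)$ and that $I_0$ restricted to $\cDsr$ has the mountain pass geometry: \eqref{eq:1.3} together with Sobolev's inequality gives $F(t) \le \tfrac{1}{2}\big(\lambda_0 + o(1)\big)|t|^{2^\ast_s}$ near $0$ and near $\infty$ for any $\lambda_0>0$, so $I_0(u) \ge \tfrac12\|u\|_{\cDs{\RN}}^2 - C\|u\|_{\cDs{\RN}}^{2^\ast_s}$ is positive on a small sphere, while (f3) lets one build, from the normalized ansatz $u_\lambda = u_1(x/\lambda)$ with $u_1$ chosen so that $\int F(u_1)>0$ (possible since $F(\zeta_1)>0$ by (f3)), a path along which $I_0$ eventually becomes negative; the $\lambda$-dependence $I_0(u_\lambda)=\tfrac{\lambda^{N-2s}}{2}\|u_1\|^2 - \lambda^N\int F(u_1)$ shows $c_{mp}>0$ and that $\Gamma\neq\varnothing$.

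Next I would produce a Palais--Smale sequence $(u_n)\subset\cDsr$ at level $c_{mp}$ which in addition \emph{almost} satisfies the Pohozaev identity $P_0(u_n)\to 0$. The standard device here is the scaling-augmented functional $\tilde I_0(\theta,u) := I_0(u(\cdot/e^{\theta}))$ on $\R\times\cDsr$, for which ordinary minimax/deformation on $\R\times\cDsr$ with the product metric yields a sequence with $\partial_\theta \tilde I_0 \to 0$ (this is $P_0(u_n)\to 0$) and $\partial_u\tilde I_0\to 0$ (which after undoing the scaling gives $I_0'(v_n)\to 0$ for the rescaled functions $v_n$). One must check the minimax level of $\tilde I_0$ over paths in $\R\times\cDsr$ equals $c_{mp}$, which is immediate since scaling acts transitively enough and $I_0(\gamma(1))<0$ is preserved. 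At this point $(v_n)$ is a bounded PS sequence in $\cDsr$: boundedness follows by combining $I_0(v_n)\to c_{mp}$, $P_0(v_n)\to 0$ and $\langle I_0'(v_n),v_n\rangle\to 0$ to control $\|v_n\|_{\cDs{\RN}}$ — here the exponent arithmetic of the zero-mass case makes the three relations linearly independent in $\|v_n\|^2$ and $\int F(v_n)$, $\int f(v_n)v_n$.

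The compactness step is where the main difficulty lies, and it splits into two parts. First, one needs the compact embedding $\cDsr \hookrightarrow L^p(\RN)$ for $p$ strictly between the relevant exponents, or rather a radial-lemma / vanishing dichotomy: by the fractional radial Strauss-type estimate, a bounded sequence in $\cDsr$ that does not vanish has a subsequence converging weakly to a nonzero limit $u_0$, and one must rule out vanishing using $c_{mp}>0$ and the structure of $F$. Second — and this is the genuinely delicate point in the zero-mass case — even with $v_n\rightharpoonup u_0\neq 0$ one does not directly get strong convergence because $\cDs{\RN}$ sees both small-scale and large-scale concentration; the resolution, following Berestycki--Lions and its fractional analogues, is that the weak limit $u_0$ is automatically a nontrivial solution of \eqref{eq:1.1} (pass to the limit in the weak formulation, using a.e. convergence and (f1)--(f2) for the nonlinear term), hence satisfies $P_0(u_0)=0$ by the Pohozaev identity valid for $\cD^s$ solutions, and then a scaling argument shows $I_0(u_0) \le \liminf I_0(v_n) = c_{mp}$. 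Since any solution with $P_0=0$ gives an admissible competitor in \eqref{eq:1.4} and, via the path $t\mapsto u_0(\cdot/t)$, also an upper bound $c_{mp}\le I_0(u_0)$, both the infimum in \eqref{eq:1.4} and $c_{mp}$ are attained at $u_0$, which finishes the proof. The hardest step will be establishing this no-vanishing/strong-recovery dichotomy in $\cDsr$ for the zero-mass fractional problem; I expect to handle it by the Pohozaev-constrained scaling trick rather than by any concentration-compactness profile decomposition.
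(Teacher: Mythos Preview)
Your overall framework --- the scaling-augmented functional $\tilde I_0(\theta,u)=I_0(u(e^{-\theta}\cdot))$, a Palais--Smale sequence at level $c_{mp}$ with $P_0(u_n)\to 0$, and boundedness from $NI_0(u_n)-P_0(u_n)=s\|u_n\|_{\cDs{\RN}}^2$ --- is exactly the paper's. The gap is in your compactness step. You pass to a weak limit $u_0$, observe it solves \eqref{eq:1.1}, and then assert $P_0(u_0)=0$ ``by the Pohozaev identity valid for $\cD^s$ solutions.'' But the paper explicitly flags (see the comments preceding the proofs and Remark~\ref{remark:3.6}) that the Pohozaev identity for solutions of \eqref{eq:1.1} with merely continuous $f$ is \emph{not} known, in particular for $0<s<1/2$; you cannot simply invoke it. The paper sidesteps this entirely: under \eqref{eq:1.3} the argument of Lemma~\ref{lemma:3.4} applies to $I_0$ itself and yields the $\mathrm{(PSP)}_c$ condition, i.e.\ \emph{strong} convergence $u_n\to u_0$ in $\cDsr$. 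Then $P_0(u_0)=\lim_n P_0(u_n)=0$ follows by continuity of $J$, and $I_0(u_0)=c_{mp}$ is immediate. The key trick in that lemma --- truncating $u_n$ into $H^s_{\rm rad}$ via $\chi_\eta(u_n)$ to access compactness in $L^p$ for $2<p<2^\ast_s$, combined with Fatou for the $f_-$ part --- is precisely what replaces your appeal to Pohozaev, and you should adopt it.

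There is a second, smaller gap in your treatment of the characterization \eqref{eq:1.4}. The infimum there runs over \emph{all} solutions $v\in\cDs{\RN}\setminus\{0\}$ with $P_0(v)=0$, not just radial ones, while $\Gamma$ consists of paths in $\cDsr$. Your path $t\mapsto u_0(\cdot/t)$ only shows $c_{mp}\le I_0(u_0)$; to get $c_{mp}\le I_0(v)$ for a possibly nonradial $v$ you need Schwarz rearrangement: $\|v^\ast\|_{\cDs{\RN}}\le\|v\|_{\cDs{\RN}}$ and $\int_{\RN}F(v^\ast)=\int_{\RN}F(v)$ give $I_0(v^\ast(\theta^{-1}\cdot))\le I_0(v(\theta^{-1}\cdot))$ for all $\theta>0$, and then the scaling path through $v^\ast$ lies in $\Gamma$ with maximum at most $I_0(v)$. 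The paper does exactly this.
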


	\begin{remark}
		(i) The equality $P_0(u) = 0$ corresponds to the Pohozaev identity. 
		
		(ii) \eqref{eq:1.4} is obtained for \eqref{eq:1.2} with $a>0$ in Ikoma \cite{I-17} and 
		this is a fractional counterpart of Jeanjean and Tanaka \cite{JT-03-1}.
	\end{remark}

	Finally, we discuss the existence of nonradial solutions to \eqref{eq:1.1} and \eqref{eq:1.2}. 
To this aim, we follow the setting in \cite{BW-93,M-17,JL-18}. 
Let $N \geq 4$ and set 
	\[
		\cO_1 := O(m_1) \times O(m_1) \times \{ \mathrm{id}_{\R^{N-2m_1}} \} \subset O(N), \quad 
		\cO_2 := O(m_2) \times O(m_2) \times O(N-2m_2) \subset O(N)
	\]
where $2 \leq m_1$ and $2 \leq m_2$ with either $N-2m_2 = 0$ or else $N-2m_2 \geq 2$. 
Remark that in the latter case, we suppose $N=4$ or $N \geq 6$ in addition. 
Writing $x = (x_1,x_2,x_3) \in \R^{m_i} \times \R^{m_i} \times \R^{N-2m_i} = \RN$, 
we define 
\[
\begin{aligned}
\cD_{\cO_1}^s &:= 
\Set{ u \in \cDs{\RN} |
	u(x_1,x_2,x_3) = u(|x_1|,|x_2|,x_3), \ 
	u(x_2,x_1,x_3) = - u(x_1,x_2,x_3)  },
\\ 
\cD_{\cO_2}^s &:= 
\Set{ u \in \cDs{\RN} |
	u(x_1,x_2,x_3) = u(|x_1|,|x_2|,|x_3|), \ 
	u(x_2,x_1,x_3) = - u(x_1,x_2,x_3)  },
\\
H^s_{\cO_1} &:= 
\Set{ u \in H^s(\RN) | 
	u (x_1,x_2,x_3) = u(|x_1|,|x_2|,x_3), \ 
	u(x_2,x_1,x_3) = - u(x_1,x_2,x_3)},
\\
H^s_{\cO_2} &:= 
\Set{ u \in H^s(\RN) | 
	u (x_1,x_2,x_3) = u(|x_1|,|x_2|,|x_3|), \ 
	u(x_2,x_1,x_3) = - u(x_1,x_2,x_3)}.
\end{aligned}
\]
Remark that $u(gx) = u(x)$ for any $g \in \cO_i$ and $u \in \cD^s_{\cO_i}$, 
and due to the antisymmetry in $(x_1,x_2)$, we have 
\[
\cD_{\cO_i}^s \cap \cDsr = \left\{ 0 \right\}, \quad 
H^s_{\cO_i} \cap \Hsr (\RN) = \left\{ 0 \right\}
\]
where $\Hsr := \set{u \in H^s(\RN) | u(x) = u(|x|) }$. 
In this setting, we have
	\begin{theorem}\label{theorem:1.5}
			\begin{enumerate}
				\item
				Under $N \geq 4$, \textup{(f0)}, \eqref{eq:1.3} and \textup{(f3)},
				\eqref{eq:1.1} has at least one 
				nonradial solution $u_0 \in \cD_{\cO_1}^s$. 
				\item 
				Assume \textup{(f0)--(f3)} and either $N =4$ or $N \geq 6$. Then 
				\eqref{eq:1.1} admits infinitely many nonradial solutions 
				$(u_k)_{k \geq 1} \subset \cD_{\cO_2}^s $ 
				with $|I_0(u_k)| < \infty$ and $\| u_k \|_{\cDs{\RN}} \to \infty$ as $k \to \infty$. 
				\item 
				Suppose $N \geq 4$, $a \geq 0$ and \textup{(F0)--(F3)}. 
				Then \eqref{eq:1.2} has at least one nonradial solution $u_0 \in H^s_{\cO_1}$. 
				\item 
				Let $N = 4$ or $N \geq 6$. Under $a \geq 0$ and \textup{(F0)--(F3)}, 
				there exist infinitely many 
				nonradial solutions $(u_k)_{k \geq 1} \subset H^s_{\cO_2} $ of \eqref{eq:1.2} 
				with $P_a(u_k) = 0$, $\| u_k \|_{H^s} \to \infty$ and $I_a(u_k) \to \infty$. 
				Here $P_a$ is defined by 
					\[
						\begin{aligned}
						P_a(u) &:= \frac{N-2s}{2} \| u \|_{s,a}^2 
						+ as \int_{\RN} (a+4\pi^2|\xi|^2)^{s-1} |\scrF u(\xi)|^2 \rd \xi - N \int_{\RN} F(u) \rd x.
						\end{aligned}
					\]
			\end{enumerate}
	\end{theorem}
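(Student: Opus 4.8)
The plan is to prove all four parts in parallel, by transplanting the variational schemes behind Theorems~\ref{theorem:1.1} and~\ref{theorem:1.3} (and, for \eqref{eq:1.2}, the scheme of Ikoma~\cite{I-17}) from the radial subspaces to the symmetric subspaces $\cD^s_{\cO_i}$, $H^s_{\cO_i}$. The guiding principle is symmetric criticality: the compact group $\cO_i$ acts isometrically on $\cDs{\RN}$ and on $H^s(\RN)$, and, adjoining the involution $\tau(x_1,x_2,x_3)=(x_2,x_1,x_3)$ acting by $(\tau\cdot u)(x):=-u(\tau x)$, one obtains an isometric action of a group $G_i$ leaving $I_0$ and $I_a$ invariant, whose fixed-point set is exactly $\cD^s_{\cO_i}$ (resp.\ $H^s_{\cO_i}$). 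By Palais' principle, a critical point of $I_0$ restricted to $\cD^s_{\cO_i}$ --- or, in the zero-mass case, a weak solution produced inside that subspace as a limit of critical points of truncated functionals --- is a genuine weak solution of \eqref{eq:1.1}, and likewise for \eqref{eq:1.2}. Since it was noted above that $\cD^s_{\cO_i}\cap\cDsr=\{0\}$ and $H^s_{\cO_i}\cap\Hsr(\RN)=\{0\}$, every nonzero such solution is automatically nonradial. Hence it will only remain to produce \emph{nontrivial} critical points inside the symmetric subspaces.

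The new compactness input is the following. When $\mathrm{Fix}(\cO_i)=\{0\}$ --- which for $\cO_2$ is exactly why one imposes $N-2m_2=0$ or $N-2m_2\geq2$ --- the embedding $H^s_{\cO_2}(\RN)\hookrightarrow L^p(\RN)$ is compact for every $2<p<2^\ast_s$; this follows from a Strauss--Lions pointwise bound obtained by averaging $|u|^2$ over the $\cO_2$-orbit of a point $x$ with $|x|$ large (whose orbit contains a sphere of radius $\gtrsim|x|$ in one of the three blocks), which yields $|u(x)|\lesssim|x|^{-\alpha}\|u\|_{H^s}$ for a fixed $\alpha>0$. For $\cO_1$, where the third block carries no rotation, $H^s_{\cO_1}\hookrightarrow L^p$ fails to be compact because of translations along $\R^{N-2m_1}$, and one runs instead a concentration--compactness analysis of Palais--Smale (or minimizing) sequences. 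The escaping translations are forced onto the $x_3$-axis --- fixed by the two $O(m_1)$-factors, hence contained in the fixed set of $\tau$ --- so a loss of compactness below the level $2c_0$, where $c_0>0$ is the least energy of the autonomous problem (attained at a radial function by Theorem~\ref{theorem:1.3}, resp.\ by the radial results for \eqref{eq:1.2}), would split off a sign-changing solution of the autonomous equation (the antisymmetry of $u_n$ survives, modulo a bounded translation, and forces sign change), whose energy is however $\geq 2c_0$; a two-bump test function $U(\,\cdot\,-p)-U(\,\cdot\,-\tau p)$ keeps the relevant min-max level strictly below $2c_0$, so no such splitting occurs and relative compactness follows. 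In the zero-mass setting no subcritical $L^p$-embedding is available at all, so one argues as in the proof of Theorem~\ref{theorem:1.1}: truncate $f$ to a family $f_\delta$ of genuinely subcritical odd nonlinearities, solve the truncated equations inside the symmetric subspace, and pass to the limit $\delta\to0$.

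Granting this, parts (i) and (iii) follow by running over $\cD^s_{\cO_1}$ resp.\ $H^s_{\cO_1}$ the mountain-pass / Pohozaev-manifold argument of Theorem~\ref{theorem:1.3} (adjusted for $a\geq0$ in the case of \eqref{eq:1.2}): \eqref{eq:1.3}, (f0), (f3) --- resp.\ (F0)--(F3) --- give the mountain-pass geometry and the strict positivity of the level, one checks that the symmetric spaces are nonzero and carry paths along which $I_0$, $I_a$ become negative (insert a rescaled antisymmetric bump), and the compactness analysis yields a nontrivial critical point, hence, via symmetric criticality and the antisymmetry, the nonradial solution $u_0$. Parts (ii) and (iv) follow by applying the symmetric (equivariant) mountain-pass theorem in $\cD^s_{\cO_2}$ resp.\ $H^s_{\cO_2}$, exploiting the oddness (f0)/(F0) exactly as in the proof of Theorem~\ref{theorem:1.1} and as in \cite{BL-83-2,I-17} for the positive-mass radial case: for each $k$ one picks a $k$-dimensional subspace of the symmetric space on which the functional tends to $-\infty$, producing a min-max value $c_k$; the compactness lemma (after truncation when $a=0$) supplies the Palais--Smale condition, and the genus estimates give $c_k\to\infty$, hence $\|u_k\|\to\infty$ (and, in the positive-mass case, $I_a(u_k)\to\infty$). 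The Pohozaev identity $P_a(u_k)=0$ in part (iv) is then obtained by carrying out the min-max on the scaling-augmented functional $\wt{I}_a(u,t):=I_a\big(u(\,\cdot\,/e^{t})\big)$ on $H^s_{\cO_2}\times\R$, whose critical values coincide with the original ones and whose critical points automatically satisfy $P_a=0$.

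The main obstacle will be the compactness in parts (i) and (iii): unlike the $\cO_2$ situation there is no compact embedding, so one must track the loss of compactness along the non-compact direction $\R^{N-2m_1}$, and it is precisely the antisymmetry under $\tau$ that rules out the radial bump which would otherwise obstruct convergence --- making this rigorous through a $G_1$-equivariant profile decomposition, together with the strict energy inequality below $2c_0$, is the technical heart of the argument. A secondary and more routine difficulty, in part (ii), is that $I_0\notin C^1(\cDs{\RN},\R)$ under (f1)--(f2), so the equivariant min-max has to be performed on the truncated problems and the $\cO_2$-symmetry shown to persist in the limit $\delta\to0$.
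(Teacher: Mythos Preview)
Your treatment of parts (ii) and (iv) is essentially the paper's: compact embedding $H^s_{\cO_2}\hookrightarrow L^p(\RN)$ for $2<p<2^\ast_s$, the symmetric mountain pass on the scaling-augmented functional $J(\theta,u)=I(T_\theta u)$ (so that $\rd_\theta J(0,u_k)=0$ yields $P_a(u_k)=0$), and for the zero-mass case the truncation-and-limit scheme of Theorem~\ref{theorem:1.1} transplanted to $\cD^s_{\cO_2}$.

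The gap is in parts (i) and (iii). Your compactness mechanism in $\cD^s_{\cO_1}$ and $H^s_{\cO_1}$ rests on the assertion that every sign-changing solution of \eqref{eq:1.1} (resp.\ \eqref{eq:1.2}) has energy at least $2c_0$. Under the bare hypotheses (f0), \eqref{eq:1.3}, (f3) (resp.\ (F0)--(F3)) this is not established and is genuinely unclear: there is no Nehari structure, and the Pohozaev identity $P_0(u)=0$ does not split as $P_0(u_+)=P_0(u_-)=0$, so one cannot compare each piece with the ground-state level. For the nonlocal operator the cross term $\la u_+,u_-\ra_{\cDs{\RN}}$ is strictly negative, which further obstructs the $s=1$ heuristic. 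Without that lower bound, the strict inequality $c_{mp}^{\cO_1}<2c_0$ you arrange with the two-bump path does not preclude loss of compactness, and the $G_1$-equivariant profile-decomposition route stalls.

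The paper sidesteps this entirely. Since the statement only asks for \emph{one} nontrivial solution in $\cD^s_{\cO_1}$ (resp.\ $H^s_{\cO_1}$), strong convergence of the Palais--Smale sequence is unnecessary. Theorem~\ref{theorem:2.3} (applied without \PSPc) produces a bounded sequence $(u_n)$ with $I_0(u_n)\to c_{mp}>0$ and $\rd J(0,u_n)\to0$; vanishing $\sup_{z\in\Z^N}\|u_n\|_{L^2(z+Q)}\to0$ is ruled out via Lemma~\ref{lemma:4.1} applied to $G(t)=f_+(t)t$, since it would force $\|u_n\|_{\cDs{\RN}}\to0$ and hence $c_{mp}=0$. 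Because $m_1\geq2$, the concentration centers $z_n$ may be taken on $\{0\}\times\{0\}\times\R^{N-2m_1}$, so the translates $v_n:=u_n(\cdot+z_n)$ stay in $\cD^s_{\cO_1}$, and their weak limit $v_0\in\cD^s_{\cO_1}\setminus\{0\}$ is a critical point of $I_0$ on $\cDs{\RN}$ by symmetric criticality. No energy threshold, no two-bump comparison, no profile decomposition is invoked; the same scheme, with the boundedness step borrowed from \cite{Amb-18-1,I-17}, handles part (iii).
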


	\begin{remark}
		The equality $P_a(u) = 0$ corresponds to the Pohozaev identity. 
	\end{remark}

	Here, we comment on the proofs of Theorems \ref{theorem:1.1}, \ref{theorem:1.3} and \ref{theorem:1.5}. 
We prove Theorems \ref{theorem:1.1}, \ref{theorem:1.3} and \ref{theorem:1.5} relying on abstract results, 
which are essentially proved in Hirata and Tanaka \cite{HT-18} and based on the properties of scaled functions 
introduced in \cite{J-97, HIT-10} for the $L^2$-constraint problem and the scalar field equations. 
One of the advantages of the abstract results is that 
we need not to construct an auxiliary functional used in \cite{HIT-10,Amb-18-1,I-17,JL-18}. 
This auxiliary functional is exploited to ensure that the obtained sequence of critical values diverge, and 
its construction depends on the problems and the function spaces. 
However, the abstract result found in \cite{HT-18} allows us to avoid this construction and 
we may treat different equations \eqref{eq:1.1} and \eqref{eq:1.2} in the more unified way. 
In this paper, the setting in \cite{HT-18 } is slightly extended and for this some modifications 
of the arguments are necessary. We believe that the result in this paper 
may be applied to other various operators, like the $p$-Laplacian. 
A similar abstract result covering $L^2$-constraint problems is developed in Ikoma and Tanaka \cite{IT-19}. 
We remark that in \cite{M-17}, Mederski also developed an abstract critical point theory and 
in \cite{BS-17,BS-18,BS-19}, Bartsch and Soave studied the $L^2$-constraint problem 
together with the minimax method in Ghoussoub \cite{G-93}. 
The approaches in \cite{BS-17,BS-18,BS-19,M-17} are close and related to our abstract result 
since they used the scaling of functions and its properties. 
See also Bartsch and de Valeriola \cite{BdS-13}.

	We add some comments for the proofs of Theorems \ref{theorem:1.1} and \ref{theorem:1.5} (ii), 
and differences between Ambrosio \cite{Amb-18-1} and our arguments. 
First, to the best of author's knowledge, it is not known whether or not 
the Pohozaev identity is satisfied by every solution of \eqref{eq:1.1}, in particular, 
for the case $0<s<1/2$ and $f \in C(\R)$. 
Therefore, we may not apply the arguments in \cite{BL-83-3,Stru-82,Stru-86} directly. 
Second, since $I_0$ is not well-defined on $\cDs{\RN}$ under (f0)--(f3), we need modifications of $f(t)$. 
To overcome these difficulties, Ambrosio \cite{Amb-18-1} required $f(t)$ to be of class $C^{1,\alpha}$. 
In addition, he approximated $f(t)$ 
by functions in the positive mass case, namely satisfying (F0)--(F3), as in Berestycki and Lions \cite{BL-83-3}. 
Therefore, the function space $H^s$ is used for approximated problems in \cite{Amb-18-1} and \cite{BL-83-3}. 
On the other hand, in this paper, we perform a different modification. 
Our modification allows us to work on $\cDs{\RN}$ and is easy to compare critical values. 
This point is useful to find infinitely many solutions. 
We also mention the recently announced paper \cite{M-19}. 
In \cite{M-19}, a similar modification of $f(t)$ to ours was used and Mederski worked on $\cD^1(\RN)$ 
and applied an abstract critical point theory in \cite{M-17}  to obtain infinitely many solutions.

	Let us point out that our arguments work for the case $s=1$. 
Therefore, the argument in this paper provides another proof 
for the results by \cite{BL-83-3,M-17,M-19,JL-18,Stru-82,Stru-86}.

	This paper is organized as follows. 
In section \ref{section:2}, we present and prove the abstract result inspired by \cite{HT-18}. 
Section \ref{section:3} is devoted to the proofs of Theorem \ref{theorem:1.1} and \ref{theorem:1.3}. 
Finally, in section \ref{section:4}, we prove Theorem \ref{theorem:1.5}.


\section{Abstract Results}\label{section:2}


	In this section, we state and prove an abstract result for proving Theorems 
\ref{theorem:1.1}, \ref{theorem:1.3} and \ref{theorem:1.5}. 
The result is motivated and essentially proved in \cite{HT-18}. 
Here we generalize the result slightly from that in \cite{HT-18}, but 
the argument is similar to \cite{HT-18}.

	Let $X$ be a Banach space and $I \in C^1(X,\R)$ be an even functional. 
Suppose that there is an action of $\R$ on $X$ and write $(\theta, u) \mapsto T_\theta u : \R\times X \to X$ 
for its action where $T_\theta \in \cL(X)$, 
$T_{\theta_1 + \theta_2} = T_{\theta_1} \circ T_{\theta_2}$ and 
$\cL(X)$ stands for the set consisting of bounded linear transformations on $X$. 
We further assume the following for $(T_\theta)_{\theta \in \R}$: 
	\begin{equation}\label{T1}
		\left\{\begin{aligned}
			&\text{A family $(T_\theta)_{\theta \in \R}$ is strongly continuous, that is,}\\
			&\text{for each $u \in X$, the map $\theta \mapsto T_\theta u : \R \to X$ is continuous}.
		\end{aligned}\right.
		\tag{T1}
	\end{equation}
Notice that by the uniform boundedness principle, \eqref{T1} and 
$(T_\theta)^{-1} = T_{-\theta}$, 
it may be verified that for every $m > 0$
	\begin{equation}\label{eq:2.1}
		\sup_{ |\theta | \leq m  } \left\| T_\theta \right\|_{\cL (X)} < \infty, 
		\quad 
		0 < \inf_{ |\theta | \leq m , \| u \|_X = 1 } \left\| T_\theta u \right\|_X.
	\end{equation}
In particular, the map $(\theta, u) \mapsto T_\theta u : \R \times X \to X $ is continuous.

	Next, put $J(\theta, u) := I( T_\theta u ) $. Since we do not assume the smoothness of the action, 
we only have $J \in C (\R \times X, \R)$, but as the second assumption, we suppose 
	\begin{equation}\label{T2}
		J(\theta, u) \in C^1( \R \times X, \R ).
		\tag{T2}
	\end{equation}
Typical examples satisfying \eqref{T1} and \eqref{T2} are given below. 
But we first notice that by 
$J(\theta + h, u) = I( T_{\theta +h} u ) = I (T_h \circ T_\theta u) = J(h,T_\theta u)$, 
it follows that 
	\[
		J(\theta + h, u) - J(\theta , u) = J( h, T_\theta u ) - J(0,T_\theta u) \quad 
		{\rm for\ each} \ \theta, h \in \R, \ u \in X.
	\]
Hence, we get 
	\begin{equation}\label{eq:2.2}
	\rd_\theta J( \theta_1 , u_1 ) = \rd_{\theta} J(0, T_{\theta_1} u_1 ) \quad 
		{\rm for\ every \ } (\theta_1, u_1) \in \R \times X.
	\end{equation}
Remark also that $\rd J(0,u) = 0$ implies $\rd I(u) = 0$. 
Therefore, 
in what follows, we look for critical points $(0,u)$ of $J$ 
instead of those of $I$. 

	Now we give examples which enjoy \eqref{T1} and \eqref{T2}. 

(i) Let $\cO \subset O(N)$ be a closed subgroup and 
	\[
		X := \Set{ u \in H^s(\RN) | u(g x) = u(x) \ {\rm for\ all} \ g \in \cO }
	\]
with a norm induced from $H^s(\RN)$. 
Set $ T_\theta u (x) := u( e^{-\theta} x )$ and 
	\[
		I(u) := \frac{1}{2} \| u \|_{\cDs{\RN}}^2 - \int_{\RN} F(u) \rd x
	\]
where $f$ satisfies (F0)--(F3) with $ - \infty < \liminf_{|t| \to \infty} |t|^{2-2^\ast_s} t^{-1} f(t)$. 
In this case, one easily sees that the map $\theta \mapsto T_\theta u : \R \to X$ 
is continuous for each $u \in X$ and 
	\[
		J(\theta, u) = I ( T_\theta u ) 
		= \frac{e^{(N-2s) \theta  }}{2} \| u \|_{\cDs{\RN}}^2 - e^{N\theta} \int_{\RN} F(u) \rd x 
		\in C^1( \R \times X , \R ).  
	\]
Thus, \eqref{T1} and \eqref{T2} hold. 

(ii) In (i), replace $H^s(\RN)$ by $\cD^s(\RN)$, namely, consider 
	\[
		X := \Set{ u \in \cD^s (\RN) | u(gx) = u(x) \ {\rm for \ all}\ g \in \cO }.
	\]
Let $T_\theta$, $I$ and $J$ be as in the above 
where $f$ satisfies (f0)--(f3) with $ - \infty < \liminf_{|t| \to 0} |t|^{2-2^\ast_s} t^{-1} f(t)$ and 
$- \infty < \liminf_{|t| \to \infty} |t|^{2-2^\ast_s} t^{-1} f(t)$. 
Then it may be checked that \eqref{T1} and \eqref{T2} hold.

	\begin{remark}\label{remark:2.1}
		In the above examples, the equality $0 = \rd_\theta J(0,u)$ is equivalent 
		to the Pohozaev identity. 
	\end{remark}

As in \cite{HT-18}, we introduce the following compact condition:
	\begin{definition}\label{definition:2.2}
		Let $c \in \R$. The functional $I$ is said to satisfy \PSPc\  if
		for every $(u_n)_{n=1}^\infty \subset X$ with 
		$J(0,u_n) = I(u_n) \to c$ and $\| \rd J(0,u_n) \|_{\R \times X^\ast} \to 0$, 
		there exists a strongly convergent subsequence $(u_{n_k})_{k=1}^\infty$ in $X$. 
	\end{definition}

Then our abstract results are the following

	\begin{theorem}\label{theorem:2.3}
		Let $X$ be a Banach space, $I \in C^1(X,\R)$, and \eqref{T1} and \eqref{T2} be satisfied. 
		Assume also that there exist $\rho_0>0$ and $w_0 \in X$ such that 
			\[
				I(0) = 0   < \inf_{ \| u \|_X = \rho_0} I (u), \quad 
				I(w_0) < 0.
			\]
		Then there exists $(u_n)_{n=1}^\infty \subset X$ such that 
		$I(u_n) \to c_{mp}$ and $\| \rd J(0,u_n) \|_{\R \times X^\ast} \to 0$ 
		where 
			\[
				c_{mp} := \inf_{ \gamma \in \Gamma} \max_{0 \leq t \leq 1} I( \gamma (t) ), \quad 
				\Gamma := \Set{ \gamma \in C( [0,1], X ) | \gamma(0) = 0, \ \rho_0< \| \gamma (1) \|_X, \   I(\gamma(1) ) < 0  }.
			\]
		In addition, if $\mathrm{(PSP)}_{c_{mp}}$ holds, then 
		there exists a $u_0 \in X$ such that $\rd J(0,u_0) = 0$ and $I(u_0) = J(0,u_0) = c_{mp}$. 
	\end{theorem}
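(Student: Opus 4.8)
The plan is to reduce the statement to a mountain pass theorem applied to the auxiliary functional $J$ on the product space $\R \times X$, rather than to $I$ on $X$. The point of passing to $J$ is that, although the action $(T_\theta)$ is only strongly continuous and hence $I \circ T$ need not be differentiable in general, assumption \eqref{T2} guarantees $J \in C^1(\R \times X, \R)$, so the standard deformation machinery is available for $J$. First I would record the elementary observation, already noted after \eqref{eq:2.2}, that a critical point $(0,u_0)$ of $J$ yields $\rd I(u_0) = 0$ with $I(u_0) = J(0,u_0)$; thus it suffices to produce a Palais--Smale (more precisely \PSPc) sequence for $J$ at the level $c_{mp}$ and then invoke \PSPc.

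The main step is to set up the correct minimax class for $J$ and to check the geometric hypotheses. I would define
\[
	\wt\Gamma := \Set{ \wt\gamma \in C([0,1], \R\times X) | \wt\gamma(0) = (0,0), \ \wt\gamma(1) = (0, w_1) \text{ with } \rho_0 < \|w_1\|_X, \ I(w_1) < 0 },
\]
and $\wt c := \inf_{\wt\gamma \in \wt\Gamma} \max_{t} J(\wt\gamma(t))$. The first claim is that $\wt c = c_{mp}$: given $\gamma \in \Gamma$, the path $t \mapsto (0, \gamma(t))$ lies in $\wt\Gamma$ and $J(0,\gamma(t)) = I(\gamma(t))$, giving $\wt c \leq c_{mp}$; conversely, from any $\wt\gamma(t) = (\theta(t), v(t)) \in \wt\Gamma$ one produces the competitor $t \mapsto T_{\theta(t)} v(t) \in C([0,1],X)$, which by $J(\wt\gamma(t)) = I(T_{\theta(t)}v(t))$ has the same max, and it lies in $\Gamma$ because at $t=0$ it is $T_0(0) = 0$ and at $t=1$ it is $T_0 w_1 = w_1$ with $\|w_1\|_X > \rho_0$ and $I(w_1)<0$; hence $c_{mp} \leq \wt c$. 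Here I would use the continuity of $(\theta,u)\mapsto T_\theta u$ from \eqref{eq:2.1}. For the mountain pass geometry of $J$: since $J(0,0) = I(0) = 0$, $J(0,w_0) = I(w_0) < 0$, and for the separating sphere one uses that any $\wt\gamma \in \wt\Gamma$, projected to $X$ via $T_{\theta(t)}v(t)$, must cross $\{\|u\|_X = \rho_0\}$, so $\max_t J(\wt\gamma(t)) \geq \inf_{\|u\|_X = \rho_0} I(u) > 0$; thus $c_{mp} = \wt c > 0$ and the abstract mountain pass lemma on $\R\times X$ (a genuine $C^1$ functional on a Banach space, no symmetry needed) produces $(\theta_n, v_n)$ with $J(\theta_n, v_n) \to c_{mp}$ and $\rd J(\theta_n, v_n) \to 0$ in $(\R\times X)^\ast$.

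Finally I would convert this into a sequence of the required form $(0, u_n)$. Set $u_n := T_{\theta_n} v_n$. By $J(\theta_n, v_n) = I(u_n) = J(0,u_n)$ we get $J(0,u_n) \to c_{mp}$, and by \eqref{eq:2.2} the partial derivative $\rd_\theta J(0,u_n) = \rd_\theta J(\theta_n, v_n) \to 0$; the $X$-component of $\rd J(0,u_n)$ equals $\rd I(u_n)$, and one checks this also tends to $0$ by comparing with the $X$-component of $\rd J(\theta_n, v_n)$ through the chain rule relating $\rd_u J(\theta,u)$ and $\rd_u J(0, T_\theta u)$ — this uses the group law $T_{\theta_1+\theta_2} = T_{\theta_1}\circ T_{\theta_2}$ together with the uniform bounds \eqref{eq:2.1} on $\|T_{\theta_n}\|_{\cL(X)}$ and $\|T_{-\theta_n}\|_{\cL(X)}$, which are available once we know $(\theta_n)$ stays bounded. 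Establishing that $(\theta_n)$ is bounded is the point I expect to be the main obstacle: it does not follow from the abstract hypotheses alone and must be extracted from the structure of the minimax sequence (for instance, a Palais--Smale sequence for $J$ at a finite level together with the coercivity of $\theta \mapsto J(\theta,u)$ near the mountain pass geometry, or by a suitable choice of almost-minimizing paths staying in a bounded $\theta$-strip). Granting boundedness of $(\theta_n)$, the uniform operator bounds transfer the smallness of $\rd J(\theta_n,v_n)$ to smallness of $\rd J(0,u_n)$, so $(u_n)$ satisfies $I(u_n) \to c_{mp}$ and $\|\rd J(0,u_n)\|_{\R\times X^\ast}\to 0$. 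Then, assuming $\mathrm{(PSP)}_{c_{mp}}$, a subsequence of $(u_n)$ converges strongly to some $u_0 \in X$; passing to the limit in $I(u_n) \to c_{mp}$ and $\rd J(0,u_n)\to 0$ using $J \in C^1$ gives $I(u_0) = J(0,u_0) = c_{mp}$ and $\rd J(0,u_0) = 0$, which completes the proof.
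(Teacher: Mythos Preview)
Your approach differs from the paper's and leaves a genuine gap at the step you yourself flag: the boundedness of $(\theta_n)$.

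The paper does not produce a Palais--Smale sequence for $J$ on $\R\times X$ in the standard dual norm and then project. The whole purpose of Section~\ref{section:2.1} is instead to build a deformation lemma directly for $I$ on $X$ (Lemma~\ref{lemma:2.5}), obtained by projecting via $\pi(\theta,u)=T_\theta u$ a deformation for $J$ constructed with respect to the Finsler metric $d_0$ and the position-dependent dual norm $\|\cdot\|_{(\theta,u),\ast}$. The proof of Theorem~\ref{theorem:2.3} is then the usual contradiction argument: if no sequence as in \eqref{eq:2.26} exists, Remark~\ref{remark:2.12} (through \eqref{eq:2.12}) triggers Lemma~\ref{lemma:2.5}, and the resulting $\eta$ lowers a near-optimal $\gamma\in\Gamma$ below $c_{mp}$. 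The key identity is \eqref{eq:2.11},
\[
\|\rd J(\theta,u)\|_{(\theta,u),\ast}^2 = |\rd_\theta J(0,T_\theta u)|^2 + \|\rd_u J(0,T_\theta u)\|_{X^\ast}^2,
\]
which makes the relevant gradient norm depend only on $T_\theta u$ and hence removes the $\theta$-variable entirely; this is precisely what the paper's machinery buys.

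Your route gives $\|\rd J(\theta_n,v_n)\|_{(\R\times X)^\ast}\to 0$ in the standard norm, and the transfer to $\rd J(0,u_n)$ needs $\|\rd_u J(0,u_n)\|_{X^\ast}\leq \|T_{-\theta_n}\|_{\cL(X)}\,\|\rd_u J(\theta_n,v_n)\|_{X^\ast}$, hence a bound on $\|T_{-\theta_n}\|$, i.e.\ on $(\theta_n)$. Under \eqref{T1}--\eqref{T2} alone there is no coercivity in $\theta$ to invoke, so your first suggestion does not work abstractly. Your second suggestion can be made rigorous: since paths $(0,\gamma)$ with $\gamma\in\Gamma$ already realise $\wt c=c_{mp}$, a localised Ekeland/quantitative deformation near such a path yields $(\theta_n,v_n)$ with $|\theta_n|\to 0$, after which your conversion via \eqref{eq:2.1} goes through. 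But you have not carried this out, and in any case it is not the paper's argument; the investment in $d_0$ and $\|\cdot\|_{(\theta,u),\ast}$ is exactly to make the deformation $\theta$-independent from the outset rather than control $\theta$ a posteriori.
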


Next, we state a symmetric mountain pass version:

	\begin{theorem}\label{theorem:2.4}
		Let $X$ be a Banach space and $I \in C^1(X,\R)$ with $I(-u) = I(u)$ for any $u \in X$. 
		Suppose \eqref{T1} and \eqref{T2}, and that 
		$I$ satisfies \PSPc\   for each $c > 0$ 
		as well as the symmetric mountain pass geometry: 
			\begin{enumerate}
				\item
					There exists a $\rho_0>0$ such that 
					$ \inf_{\| u \|_{X} = \rho_0} I(u) > 0 = I(0)$. 
				\item
					For each $j \geq 1$, there exists an odd map 
					$\gamma_j \in C( S^{j-1} , X )$ such that 
					$\max_{ \sigma \in S^{j-1} } I(\gamma_j(\sigma)) < 0$ and 
					$\gamma_j (S^{j-1}) \subset X \setminus \ov{B_{\rho_0} (0) }$ 
					where $S^{j-1} := \set{ \sigma \in \R^j |   |\sigma|=1 }$. 
			\end{enumerate}
		Then there exist $(u_j)_{j=1}^\infty$ so that 
		$\rd J(0,u_j) = 0$ and $I(u_j) = J(0,u_j) = c_j \to \infty$ as $j \to \infty$ 
		where $c_j$ is defined in \eqref{eq:2.27} below. 
	\end{theorem}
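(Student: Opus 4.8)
\emph{Strategy.} The plan is to run Rabinowitz's symmetric minimax scheme, but to carry out the deformation on the slice $\{0\}\times X\subset\R\times X$ via the $J$-gradient flow, so that only \PSPc\ (rather than the full Palais--Smale condition) is needed; the critical levels are counted with the Krasnoselskii genus. Let $D_k$ denote the closed unit ball $\{\sigma\in\R^k:|\sigma|\le1\}$, let $S^{k-1}=\partial D_k$, and for an odd $\gamma\in C(S^{k-1},X)$ write $\wt\gamma\in C(D_k,X)$, $\wt\gamma(\sigma):=|\sigma|\gamma(\sigma/|\sigma|)$ and $\wt\gamma(0):=0$, for its odd radial extension. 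For $j\ge1$ I would set
\[
\Gamma_j:=\bigl\{\,\ov{\gamma(D_k\setminus Y)}\ :\ k\ge j,\ \gamma\in C(D_k,X)\ \text{odd},\ \gamma|_{S^{k-1}}=\gamma_k,\ Y\subset D_k\ \text{closed symmetric},\ 0\notin Y,\ \mathrm{genus}(Y)\le k-j\,\bigr\}
\]
and
\begin{equation}\label{eq:2.27}
c_j:=\inf_{A\in\Gamma_j}\ \sup_{u\in A}I(u).
\end{equation}
Taking $k=j$, $Y=\emptyset$, $\gamma=\wt{\gamma_j}$ gives $\Gamma_j\ne\emptyset$ and $c_j<\infty$; $\Gamma_{j+1}\subset\Gamma_j$ gives $c_j\le c_{j+1}$; and $c_j\ge\delta_0:=\inf_{\|u\|_X=\rho_0}I(u)>0$, since for $A=\ov{\gamma(D_k\setminus Y)}\in\Gamma_j$ the set $O:=\{\sigma\in D_k:\|\gamma(\sigma)\|_X<\rho_0\}$ is a bounded open symmetric neighbourhood of $0$ in $\R^k$ with $\ov O\subset\operatorname{int}\,D_k$ (because $\|\gamma_k\|_X>\rho_0$ on $S^{k-1}$), whence $\mathrm{genus}(\partial O)\ge k$; if $\gamma(D_k\setminus Y)$ missed $\{\|u\|_X=\rho_0\}$ then $\partial O\subset Y$, forcing $k\le\mathrm{genus}(Y)\le k-j$, which is impossible, so $\sup_AI\ge\delta_0$.

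\emph{Deformation.} Next I would invoke the deformation from the proof of Theorem \ref{theorem:2.3}, built from the $J$-gradient flow on $\R\times X$ together with the identity $J(\theta,u)=J(0,T_\theta u)$ (so that the component $v(t)=T_{\theta(t)}u(t)$ of the flow stays in $X$ and $I(v(t))=J(\theta(t),u(t))$ is nonincreasing). Since $T_\theta$ is linear and $I$ even, $J$ is even in $u$, so this deformation may be taken odd, yielding: for every $c>0$ and every closed symmetric neighbourhood $N$ of $K_c:=\{u\in X:\rd J(0,u)=0,\ J(0,u)=c\}$ (with $N=\emptyset$ allowed if $K_c=\emptyset$), there exist $\be\in(0,\delta_0/2)$ and an odd $\eta\in C(X,X)$ with $\eta(\{I\le c+\be\}\setminus N)\subset\{I\le c-\be\}$, $\eta=\mathrm{id}$ on $\{I\le c-2\be\}$, and $I(\eta(u))\le I(u)$ for all $u$; the quantitative descent uses \PSPc\ (to bound $\|\rd J(0,\cdot)\|$ from below off the critical set). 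Moreover, \PSPc\ for $c>0$ makes each $K_c$ compact, and $0\notin K_c$ since $I(0)=0$, so $\mathrm{genus}(K_c)<\infty$.

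\emph{Conclusion.} Granting this, the rest is the standard genus argument. First, each $c_j$ is a critical value of $J$ on the slice: otherwise $K_{c_j}=\emptyset$, and choosing $A=\ov{\gamma(D_k\setminus Y)}\in\Gamma_j$ with $\sup_{\gamma(D_k\setminus Y)}I<c_j+\be$, the map $\eta\circ\gamma$ is odd and equals $\gamma_k$ on $S^{k-1}$ (since $I\circ\gamma_k<0<c_j-2\be$), so $\ov{(\eta\circ\gamma)(D_k\setminus Y)}\in\Gamma_j$ has $\sup I\le c_j-\be$, contradicting \eqref{eq:2.27}; this produces $u_j$ with $\rd J(0,u_j)=0$ and $I(u_j)=J(0,u_j)=c_j$, hence $\rd I(u_j)=0$ and the Pohozaev-type identity $\rd_\theta J(0,u_j)=0$ by \eqref{eq:2.2} and Remark \ref{remark:2.1}. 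Second, $c_j\to\infty$: if $c_j\le M$ for all $j$, then $c_j\nearrow c^*\in[\delta_0,M]$; with $q:=\mathrm{genus}(K_{c^*})<\infty$, a closed symmetric neighbourhood $N$ of $K_{c^*}$ with $\mathrm{genus}(N)=q$, and the corresponding $\be,\eta$, fix $j$ with $c_j>c^*-\be$ and take $A=\ov{\gamma(D_k\setminus Y)}\in\Gamma_{j+q}$ with $\sup_{\gamma(D_k\setminus Y)}I<c^*+\be$; then $Y':=Y\cup\gamma^{-1}(N)\subset D_k$ is closed symmetric with $0\notin Y'$ and $\mathrm{genus}(Y')\le\mathrm{genus}(Y)+\mathrm{genus}(\gamma^{-1}(N))\le(k-j-q)+q=k-j$ (using the odd map $\gamma:\gamma^{-1}(N)\to N$), so $\ov{(\eta\circ\gamma)(D_k\setminus Y')}\in\Gamma_j$; but on $D_k\setminus Y'$ one has $\gamma(\sigma)\notin N$ and $I(\gamma(\sigma))<c^*+\be$, hence $I(\eta(\gamma(\sigma)))\le c^*-\be$, giving $c_j\le c^*-\be$, a contradiction. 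Thus $c_j\to\infty$.

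\emph{Main obstacle.} The one non-routine ingredient is the deformation quoted above: one must build a descent deformation for $I$ on $X$ using only \PSPc\ (Palais--Smale together with the asymptotic Pohozaev information carried by $\rd_\theta J(0,\cdot)$) in place of the usual Palais--Smale condition, and in particular keep the flow component in a controlled $\theta$-range so that \eqref{eq:2.1} transfers the lower bound on $\|\rd J(0,\cdot)\|$ to $\|\rd J(\theta,\cdot)\|$. This is precisely where the $\R$-action and the reduction to critical points of $J$ on $\{0\}\times X$ are indispensable, and it has to be done following the argument for Theorem \ref{theorem:2.3}; the remaining pieces (monotonicity and subadditivity of the genus, its behaviour under odd continuous maps, $\mathrm{genus}(\partial O)\ge k$, finiteness of the genus on compact sets and the existence of genus-preserving symmetric neighbourhoods) are standard.
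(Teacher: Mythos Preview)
Your proposal is correct and follows essentially the same approach as the paper: Rabinowitz's genus--based symmetric minimax scheme combined with the odd deformation lemma for $I$ built from the $J$--flow under \PSPc\ (this is Lemma~\ref{lemma:2.5} in the paper, which you rightly single out as the one non--routine ingredient). The only cosmetic discrepancy is that your family $\Gamma_j$ is what the paper calls $\Lambda_j$, so your $c_j$ is the paper's $d_j$; the paper additionally introduces a simpler value $c_j\ge d_j$ (odd extensions $D_j\to X$ of $\gamma_j$, no excision) and phrases the divergence via the multiplicity statement $g(K_d)\ge\ell+1$, but the substance --- criticality from the deformation, unboundedness from the genus bound --- is exactly your argument.
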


Before proving Theorems \ref{theorem:2.3} and \ref{theorem:2.4}, 
we first prepare a deformation lemma introduced in \cite{HT-18}.


\subsection{Deformation Lemma}
\label{section:2.1}


Let $X$ be a Banach space, and 
$(T_\theta)_{\theta \in \R}$ and $I \in C^1(X,\R)$ satisfy 
\eqref{T1} and \eqref{T2}. 
The aim of this subsection is to prove

	\begin{lemma}\label{lemma:2.5}
		For $c > 0$, define 
		\[
			K_c := \Set{ u \in X | I(u) = J(0,u) = c, \ \rd J(0,u) =0 }. 
		\]
		Suppose that $I$ satisfies \PSPc and 
		let $\cO \subset X $ be any neighborhood of $K_c$ and $\cO = \emptyset$ provided $K_c = \emptyset$. 
		Then for any $ \be > 0$, there exist $\e \in (0, \be )$ and 
		$\eta \in C( [0,1] \times X , X )$ such that 
		\begin{enumerate}
			\item[\rm (i)] $\eta (0,u) = u$ for every $u \in X$. 
			\item[\rm (ii)] $\eta (t,-u) = -\eta (t,u)$ for each $(t,u) \in [0,1] \times X$ if $I$ is even. 
			\item[\rm (iii)] $\eta (t,u) = u$ if $u \in [I \leq c - \be ] := \set{ u \in X | I(u) \leq c - \be }$. 
			\item[\rm (iv)] A map $t \mapsto I(\eta (t,u)) : [0,1] \to \R$ is nonincreasing for any $u \in X$. 
			\item[\rm (v)] $\eta (1 , [ I \leq b + \e ] \setminus \cO ) \subset [ I \leq b - \e ]$ and 
			$\eta (1, [ I \leq b + \e] ) \subset [  I \leq b - \e ] \cup \cO$. 
		\end{enumerate}
	\end{lemma}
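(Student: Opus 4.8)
The plan is to adapt the classical quantitative deformation lemma (à la Willem) to the present setting, where the crucial point is that we must deform in $X$ while using only the partial gradient information $\rd J(0,\cdot)$ encoded in \PSPc, rather than $\rd I$ itself. First I would record the consequence of \PSPc: there exist constants $\e_0 \in (0,\be)$ and $\delta > 0$ such that
\[
\| \rd J(0,u) \|_{\R \times X^\ast} \geq \delta
\qquad \text{for all } u \in A_{\e_0} := \{ u \in X : |I(u) - c| \leq \e_0 \} \setminus \mathcal{N},
\]
where $\mathcal{N}$ is a suitable (possibly smaller) neighborhood of $K_c$ contained in $\cO$ with $\mathcal{N} = \emptyset$ if $K_c = \emptyset$. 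If this failed, a sequence violating it would, by \PSPc, converge to a point of $K_c$ lying outside $\mathcal{N}$, a contradiction. Then I would choose $\e \in (0, \e_0)$ small, with the standard relation $2\e < \delta \cdot (\text{gap between } \mathcal{N} \text{ and the complement of } \cO)$ so that the flow, which will move points at controlled speed, cannot leave $\cO$ before descending.

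Next I would build the vector field. Using \eqref{eq:2.2}, $\rd_\theta J(\theta_1, u_1) = \rd_\theta J(0, T_{\theta_1} u_1)$, and the fact that $\rd J(0,u)$ splits as $(\rd_\theta J(0,u), \rd_u J(0,u)) = (\rd_\theta J(0,u), \rd I(u))$, one constructs a locally Lipschitz pseudo-gradient vector field $V: \widetilde{A} \to X$ on a neighborhood $\widetilde{A}$ of $A_{\e_0}$ satisfying $\|V(u)\|_X \leq 1$ and $\langle \rd I(u), V(u)\rangle \geq \frac{1}{2}\min\{\delta, \|\rd I(u)\|_{X^\ast}\}$ — but with the important twist that, because \PSPc\ only controls the \emph{full} norm $\|\rd J(0,u)\|_{\R\times X^\ast}$, when $\rd I(u)$ alone is small we instead exploit $\rd_\theta J(0,u) \neq 0$ and move in the $\theta$-direction: that is, the deformation is realized as $\eta(t,u) = T_{\theta(t,u)} \sigma(t,u)$ where $\sigma$ flows along a pseudo-gradient of $I$ and $\theta$ flows along $\rd_\theta J(0,\cdot)$, the two being run simultaneously on the product flow for $J$ on $\R \times X$ and then projected via $T$. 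Cutting off $V$ to vanish outside $A_{\e_0}$ and inside $[I \leq c - \be]$ (using an Urysohn-type locally Lipschitz function), one integrates the ODE $\dot{y} = -V(y)$, $y(0) = u$, which is globally defined since $V$ is bounded; rescaling time to $[0,1]$ gives $\eta$. Evenness (ii) follows by choosing $V$ odd when $I$ is even, which is possible since $\rd I$ and $\rd_\theta J(0,\cdot)$ are then odd; properties (i), (iii), (iv) are immediate from the construction of $V$ and the cutoffs.

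For (v), the argument is the usual one: if $u \in [I \leq c + \e]\setminus \cO$ and $I(\eta(1,u)) > c - \e$, then along the whole trajectory $I(\eta(t,u)) \in [c-\e, c+\e] \subset [c - \e_0, c+\e_0]$, and one must check the trajectory never enters $\mathcal{N}$ — this is where the speed bound $\|V\|\leq 1$ together with the choice of $\e$ small relative to $\mathrm{dist}(\mathcal{N}, X\setminus\cO)$ is used. On that trajectory $V$ does not vanish and $\frac{d}{dt} I(\eta(t,u)) \leq -\frac{\delta}{2}$ (after time rescaling, $\leq -2\e$), forcing $I(\eta(1,u)) \leq c + \e - 2\e = c - \e$, a contradiction; hence $\eta(1, [I\leq c+\e]\setminus\cO)\subset [I\leq c-\e]$. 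The second inclusion in (v) follows because any $u \in [I\leq c+\e]$ whose trajectory enters $\cO$ either stays in $\cO$ up to $t=1$ — giving $\eta(1,u)\in\cO$ — or exits, after which the first inclusion applies to the exit point.

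\textbf{Main obstacle.} The chief difficulty, and the only real departure from the textbook proof, is that \PSPc\ gives coercivity of the \emph{combined} differential $\rd J(0,u) \in (\R\times X)^\ast$ rather than of $\rd I(u)\in X^\ast$; so one cannot simply deform along a pseudo-gradient of $I$. The resolution — deforming along the flow of $J$ on $\R\times X$ and pushing forward by the (non-smooth but, by \eqref{eq:2.1}, jointly continuous) action $T_\theta$ — must be set up carefully so that the resulting $\eta$ is genuinely continuous $[0,1]\times X \to X$, maps $X$ into $X$ (using $T_\theta X \subseteq X$, which holds in all our examples by equivariance), and still yields the descent estimate in (v) in terms of $I = J(0,\cdot)$ alone. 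Verifying that the $\theta$-component of the flow stays bounded on $[0,1]$ — so that \eqref{eq:2.1} supplies uniform control of $\|T_{\theta}\|_{\cL(X)}$ — is the technical heart of the continuity claim.
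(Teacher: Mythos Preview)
Your overall strategy --- run a pseudo-gradient flow for $J$ on $\R\times X$ and then project to $X$ via $(\theta,u)\mapsto T_\theta u$ --- is exactly the paper's, and you correctly isolate the real issue: \PSPc\ controls $\rd J(0,\cdot)$, not $\rd I$. Where your sketch and the paper diverge is in how the distance/neighborhood argument for (v) is carried out.

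You propose to do the annulus argument directly in $X$, using $\mathrm{dist}_X(\mathcal N, X\setminus\cO)$ and a speed bound $\|V\|\le 1$. The gap is that the projected trajectory $t\mapsto \eta(t,u)=T_{\theta(t)}\sigma(t)$ does \emph{not} have $X$-speed bounded by $1$: its derivative involves $\|T_{\theta(t)}\|_{\cL(X)}$, which is only controlled once $\theta(t)$ is known to be bounded, and also the $\theta$-variation of $T_\theta\sigma$ (which need not even exist, since the action is only strongly continuous). So the step ``the flow cannot cross from $X\setminus\cO$ into $\mathcal N$ in time $\le 1$'' is not justified as written. Your proposed remedy --- first prove $\theta(t)$ bounded --- is viable (with the right normalization of the flow the $\theta$-speed is $\le 3/2$, so $|\theta(t)|\le 3/2$ on $[0,1]$), but even then you still need a uniform $X$-speed bound for the \emph{projected} flow, and that requires additional care because $\theta\mapsto T_\theta v$ is not assumed differentiable.

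The paper sidesteps all of this by never doing the distance argument in $X$. It introduces a Finsler-type metric $d_0$ on $\R\times X$ built from the norms $\|(\tilde\theta,\tilde u)\|_{(\theta,u)}=(\tilde\theta^2+\|T_\theta\tilde u\|_X^2)^{1/2}$; the normalized pseudo-gradient flow then has $d_0$-speed $\le 3/2$ automatically, and the neighborhoods $\wt N_\rho(\wt K_c)$ are taken in $d_0$. The crossing-time estimate for (v) is done entirely on $(\R\times X,d_0)$ (Lemma~\ref{lemma:2.9}), and only afterwards is the result pushed down to $X$ via a separate lemma (Lemma~\ref{lemma:2.11}) showing $\pi(\wt N_\rho(\wt K_c))\subset N_{R(\rho)}(K_c)$ with $R(\rho)\to 0$; compactness of $K_c$ from \PSPc\ is used here. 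This two-step structure --- deform upstairs in $d_0$, then translate neighborhoods downstairs --- is the missing idea in your plan.
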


To prove Lemma \ref{lemma:2.5}, as in \cite{HT-18}, 
we exploit the functional $J(\theta, u)$ and first 
prove a deformation lemma for $J$. 
For this purpose, we need some preparations.

	Let us write
	\[
		\| (\theta, u) \|_{\R \times X} := | \theta | + \| u \|_X, \quad 
		d_{\R \times X} \left( (\theta_1,u_1) , (\theta_2, u_2 ) \right) := 
		\left\| (\theta_1-\theta_2 , u_1 - u_2 ) \right\|_{\R \times X}.
	\]
Next, we define the second metric $d_0$ by
	\[
		d_0 \left(  \left( \theta_1,u_1  \right) ,  \left( \theta_2,u_2  \right)\right) 
		:= \inf \left\{  \int_0^1 \left\| \dot{\sigma} (\tau) \right\|_{ \sigma(\tau) } \rd \tau 
		\;\middle| \; \sigma \in C^1( [0,1], \R \times X ), \ \sigma (0) = \left( \theta_1, u_1 \right), \ 
		\sigma(1) = \left( \theta_2, u_2 \right)  \right\}
	\]
where 
	\[
		\left\| \left( \wt{\theta} , \wt{u} \right) \right\|_{(\theta,u)} 
		:= \left( \wt{\theta}^2 + \left\| T_{\theta} \wt{u} \right\|_X^2 \right)^{1/2} 
		\quad {\rm for} \ (\theta,u), (\wt{\theta}, \wt{u}) \in \R \times X .
	\]
Since $(\theta, u) \mapsto \| T_\theta u \|_X$ is continuous, 
so is $\tau \mapsto \| \dot{\sigma} (\tau) \|_{\sigma (\tau)}$ and 
$d_0$ is well-defined. Furthermore, we have 
	\begin{proposition}\label{proposition:2.6}
		\begin{enumerate}
			\item 
				The metric space $(\R \times X,d_0)$ is complete. 
			\item 
				For every $m > 0$ and $R>0$, there exist $C_{m,R}, \wt{C}_{m,R} > 0$ such that 
				for each $(\theta,u) \in [-m,m] \times X$ and $r \in (0,R]$, 
					\begin{align}
						& d_{\R \times X} \left(  (\theta,u) , (\theta_1,u_1) \right) 
						\leq C_{m,R} d_0 \left( (\theta,u) , (\theta_1,u_1) \right)
						\label{align:2.3}
						& &\textup{for\ all\ } (\theta_1,u_1) \in B^{d_0}_r(\theta,u),
						\\
						& d_{0} \left(  (\theta,u) , (\theta_1,u_1) \right) 
						\leq \wt{C}_{m,R} d_{\R \times X} \left( (\theta,u) , (\theta_1,u_1) \right)
						& &\textup{for\ all\ } (\theta_1,u_1) \in B^{d_{\R\times X}}_{ \wt{C}_{m,R}^{-1}\, r }(\theta,u)
						\label{align:2.4}
					\end{align}
			where $B^d_r(\theta,u)$ denotes a ball centered at $(\theta,u)$ with radius $r>0$ 
			in a metric $d$. 
			In particular, 
				\[
					B_{ \wt{C}_{ m ,R }^{-1} \, r  }^{d_{\R \times X}} (\theta, u) 
					\subset B_{r}^{d_0}(\theta,u) 
					\subset B_{ C_{ m, R } \, r  }^{d_{\R \times X}} (\theta, u) 
					\quad {\rm for\ each} \ r \in (0,R].
				\]
			\item[\emph{(iii)}] For all $\alpha \in \R$ and $(\theta_1,u_1), (\theta_2,u_2) \in \R \times X$, 
				\begin{equation}\label{eq:2.5}
					d_0 \left( \left( \theta_1, u_1 \right) , \left( \theta_2 , u_2 \right) \right) 
					= d_0 \left( \left( \theta_1 + \alpha , T_{-\alpha} u_1 \right) , 
					\left( \theta_2 + \alpha,  T_{-\alpha} u_2 \right) \right). 
				\end{equation}
			\item[\emph{(iv)}] 
			For each $u,v \in X$, 
				\begin{equation}\label{eq:2.6}
					d_0 \left( \left( 0,u \right) , \left( 0 ,v  \right) \right) 
					\leq  \left\| v - u \right\|_X.
				\end{equation}
		\end{enumerate}
	\end{proposition}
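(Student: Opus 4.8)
The plan is to establish the four assertions of Proposition~\ref{proposition:2.6} essentially in the order in which the needed estimates build on each other, starting from the local comparison of the two metrics, since (i) will follow from (ii) and the completeness of $\R \times X$ in $d_{\R \times X}$, and (iii)--(iv) are soft consequences of the definition of $d_0$ together with the group law $T_{\theta_1+\theta_2} = T_{\theta_1}\circ T_{\theta_2}$.

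\medskip
\noindent\textbf{Step 1: the local bi-Lipschitz comparison (ii).} Fix $m>0$ and $R>0$ and set $M := \sup_{|\theta|\le m+R} \|T_\theta\|_{\cL(X)} < \infty$ and $\mu := \inf_{|\theta|\le m+R,\ \|u\|_X=1}\|T_\theta u\|_X > 0$, both finite and positive by \eqref{eq:2.1}. For the lower bound \eqref{align:2.3}: given a $C^1$ path $\sigma = (\sigma_1,\sigma_2)$ from $(\theta,u)$ to $(\theta_1,u_1)$ whose $d_0$-length is within, say, twice $d_0((\theta,u),(\theta_1,u_1)) \le 2r$, I first argue that $|\sigma_1(\tau)| \le m + 2r \le m + 2R$ along the path (since $\int |\dot\sigma_1| \le \text{length} \le 2r$), so that all the $T_{\sigma_1(\tau)}$ and $T_{\sigma_1(\tau)}^{-1} = T_{-\sigma_1(\tau)}$ that appear are controlled by $M$ and $\mu$; then
\[
	\|\dot\sigma_1(\tau)\| + \|\dot\sigma_2(\tau)\|_X
	\le \|\dot\sigma_1(\tau)\| + \mu^{-1}\|T_{\sigma_1(\tau)}\dot\sigma_2(\tau)\|_X
	\le (1+\mu^{-1})\sqrt{2}\,\|\dot\sigma(\tau)\|_{\sigma(\tau)},
\]
and integrating gives $d_{\R\times X}((\theta,u),(\theta_1,u_1)) \le \sqrt{2}(1+\mu^{-1})\cdot 2\, d_0(\cdots)$; taking the infimum and adjusting constants yields \eqref{align:2.3} with $C_{m,R} := 2\sqrt{2}(1+\mu^{-1})$, after one also checks that the assumed bound $d_0 \le r \le R$ indeed forces the path to stay in $|\theta|\le m+R$. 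For the upper bound \eqref{align:2.4}, given $(\theta_1,u_1)$ with $d_{\R\times X}((\theta,u),(\theta_1,u_1))$ small, take the straight-line path $\sigma(\tau) = (1-\tau)(\theta,u) + \tau(\theta_1,u_1)$; its first component stays in $[-m-R,m+R]$ once the $d_{\R\times X}$-distance is $\le R$, so $\|\dot\sigma(\tau)\|_{\sigma(\tau)} = (|\theta_1-\theta|^2 + \|T_{\sigma_1(\tau)}(u_1-u)\|_X^2)^{1/2} \le |\theta_1-\theta| + M\|u_1-u\|_X \le (1+M)\, d_{\R\times X}(\cdots)$, giving \eqref{align:2.4} with $\wt C_{m,R} := 1+M$. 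The chain of ball inclusions is then immediate from \eqref{align:2.3}--\eqref{align:2.4}.

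\medskip
\noindent\textbf{Step 2: completeness (i).} A $d_0$-Cauchy sequence is eventually contained in a fixed $d_0$-ball around some point $(\theta_*,u_*)$ with $|\theta_*|\le m$; by \eqref{align:2.3} it is then $d_{\R\times X}$-Cauchy, hence converges in $d_{\R\times X}$ to some limit (as $\R\times X$ is a Banach space), and by \eqref{align:2.4} it converges in $d_0$ to the same limit. One small point to handle: the $d_0$-ball radius $r$ must be chosen $\le R$ so Step~1 applies; this is fine since tails of a Cauchy sequence shrink.

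\medskip
\noindent\textbf{Step 3: the translation invariance (iii) and (iv).} For \eqref{eq:2.5}, observe that $\sigma = (\sigma_1,\sigma_2) \mapsto \tilde\sigma := (\sigma_1 + \alpha,\ T_{-\alpha}\sigma_2)$ is a bijection between $C^1$ paths joining $(\theta_1,u_1)$ to $(\theta_2,u_2)$ and $C^1$ paths joining $(\theta_1+\alpha, T_{-\alpha}u_1)$ to $(\theta_2+\alpha, T_{-\alpha}u_2)$ ($T_{-\alpha}\in\cL(X)$ is bounded, so $\tilde\sigma$ is $C^1$ iff $\sigma$ is); moreover $\dot{\tilde\sigma}(\tau) = (\dot\sigma_1(\tau), T_{-\alpha}\dot\sigma_2(\tau))$ and, using $T_{\sigma_1(\tau)+\alpha}\circ T_{-\alpha} = T_{\sigma_1(\tau)}$,
\[
	\|\dot{\tilde\sigma}(\tau)\|_{\tilde\sigma(\tau)}
	= \bigl(\dot\sigma_1(\tau)^2 + \|T_{\sigma_1(\tau)+\alpha}T_{-\alpha}\dot\sigma_2(\tau)\|_X^2\bigr)^{1/2}
	= \bigl(\dot\sigma_1(\tau)^2 + \|T_{\sigma_1(\tau)}\dot\sigma_2(\tau)\|_X^2\bigr)^{1/2}
	= \|\dot\sigma(\tau)\|_{\sigma(\tau)},
\]
so the two path-length functionals coincide termwise and the infima agree. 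For \eqref{eq:2.6}, test with the straight segment $\sigma(\tau) = (0, (1-\tau)u + \tau v)$ joining $(0,u)$ to $(0,v)$: here $\sigma_1 \equiv 0$, so $\|\dot\sigma(\tau)\|_{\sigma(\tau)} = \|T_0(v-u)\|_X = \|v-u\|_X$ and its length is exactly $\|v-u\|_X$, bounding $d_0((0,u),(0,v))$ from above.

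\medskip
\noindent\textbf{Main obstacle.} The only genuinely delicate point is the \emph{a priori} confinement of the $\theta$-component in Step~1: to invoke the uniform bounds from \eqref{eq:2.1} one must know $|\sigma_1(\tau)| \le m + R$ (say) along \emph{every} path whose length is within a constant factor of $d_0(\cdots) \le r \le R$, and this requires a careful bootstrap — one restricts attention to nearly length-minimizing paths, uses $\int_0^1|\dot\sigma_1| \le \int_0^1\|\dot\sigma\|_\sigma \le 2d_0 \le 2R$ to bound the total variation of $\sigma_1$, and only then is allowed to use $M$ and $\mu$ on that path. Getting the constants and the radius restrictions mutually consistent (so that the ball inclusions in (ii) come out with the stated radii $\wt C_{m,R}^{-1}r$ and $C_{m,R}r$) is where the bookkeeping must be done with some care; everything else is routine.
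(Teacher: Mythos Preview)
Your proposal is correct and follows essentially the same approach as the paper: both use the trivial estimate $\int_0^1|\dot\sigma_1|\le \int_0^1\|\dot\sigma\|_\sigma$ to confine the $\theta$-component of near-minimizing paths, then invoke \eqref{eq:2.1} for the $X$-component, and use straight-line paths for all upper bounds. The only organizational difference is that you prove (ii) first and deduce (i) from it, whereas the paper proves (i) directly from the same path-length estimates and then (ii) separately; your order is arguably cleaner but not materially different. One small bookkeeping slip: you define $M,\mu$ over $|\theta|\le m+R$ but your confinement argument for \eqref{align:2.3} only gives $|\sigma_1(\tau)|\le m+2R$, so the constants should be taken over the larger interval --- you flag exactly this issue in your ``Main obstacle'' paragraph, so just be sure the final write-up uses $m+2R$ (or restricts to paths of length $\le r$ rather than $\le 2r$).
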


	\begin{proof}
(i) Remark that for any $\sigma (\tau) = ( \sigma_\theta (\tau), \sigma_u (\tau ) ) \in C^1([0,1], \R \times X)$ 
and $s \in [0,1]$, we have 
	\begin{equation}\label{eq:2.7}
		| \sigma_\theta (s) - \sigma_\theta (0) | \leq \int_0^s | \dot{\sigma}_\theta (\tau) | \rd \tau 
		\leq \int_0^1 \| \dot{\sigma} (\tau) \|_{\sigma (\tau)} \rd \tau. 
	\end{equation}

	Let $( (\theta_n, u_n)  )_n$ be a Cauchy sequence in $(\R \times X, d_0)$ and 
$\sigma_{n,m} \in C^1( [0,1], \R \times X )$ satisfy 
	\begin{equation}\label{eq:2.8}
		\int_0^1 \| \dot{\sigma}_{n,m} (\tau)  \|_{\sigma_{n,m} (\tau) } \rd \tau 
		\leq 2 d_0 \left( (\theta_n,u_n) , (\theta_m,u_m) \right), \quad 
		\sigma_{n,m} (0) = (\theta_n,u_n), \quad 
		\sigma_{n,m} (1) = ( \theta_m, u_m ). 
	\end{equation}
From \eqref{eq:2.7} and \eqref{eq:2.8}, it follows that $(\theta_n)_n$ is a Cauchy sequence in $\R$. 
Hence, there exists a $\theta_\infty \in \R$ such that 
	\[
		\max_{0 \leq s \leq 1} \left| \sigma_{n,m,\theta} (s) - \theta_\infty \right| 
		\to 0 \quad {\rm as} \ n,m \to \infty. 
	\]
Thus, $(\max_{0 \leq s \leq 1} |\sigma_{n,m,\theta} (s) |)_{n,m}$ is bounded and 
$\theta_n \to \theta_{\infty}$. 
By \eqref{eq:2.1}, for some $c>0$, we obtain 
	\begin{equation}\label{eq:2.9}
		\begin{aligned}
			2 d_0 \left( (\theta_n,u_n) , (\theta_m,u_m) \right) 
			& \geq 
			\int_0^1 \| \dot{\sigma}_{n,m} (\tau) \|_{\sigma_{n,m} (\tau)} \rd \tau 
			\\
			& \geq \int_0^1 \| T_{\sigma_{n,m,\theta} (\tau)  } \dot{\sigma}_{n,m,u} (\tau) \|_X \rd \tau 
			\\
			& \geq c \int_0^1 \| \dot{\sigma}_{n,m,u} (\tau) \|_X \rd \tau 
			\\
			& \geq c \left\| \int_0^1 \dot{\sigma}_{n,m,u} (\tau) \rd \tau \right\|_X 
			= c \| u_n - u_m \|_X.
		\end{aligned}
	\end{equation}
This implies that $(u_n)_n$ is a Cauchy sequence in $(X,\| \cdot \|_X)$ and 
$\| u_n - u_\infty \|_X \to 0$ for some $u_\infty \in X$.

	Now by the straight line joining $(\theta_n,u_n)$ and $(\theta_\infty, u_\infty)$ 
and \eqref{eq:2.1}, we may find a $C>0$ such that 
	\[
		d_0 \left( (\theta_n,u_n) , (\theta_\infty , u_\infty) \right) 
		\leq C \left( | \theta_n - \theta_\infty | + \| u_n - u_\infty \|_X \right) \to 0.
	\]
Hence, $(\R \times X , d_0)$ is complete.

	(ii) Let $(\theta,u) \in [-m,m] \times X$, $r \in (0,R]$ and 
	$(\theta_1,u_1) \in B^{d_0}_r ( \theta, u )$. 
Let $\sigma \in C^1([0,1], \R \times X)$ satisfy 
	\[
		\sigma(0) = (\theta,u), \quad \sigma (1) = (\theta_1, u_1), \quad 
		\int_0^1 \| \dot{\sigma} (\tau) \|_{\sigma (\tau)} \rd \tau 
		\leq 2 d_0 \left( (\theta, u) , ( \theta_1, u_1) \right) <  2R. 
	\]
By \eqref{eq:2.7}, we obtain 
	\[
		| \theta_1 - \theta | \leq 2 d_0 \left( (\theta, u) , ( \theta_1, u_1) \right), 
		\quad 
		| \sigma_\theta (s) | \leq | \theta | + 2R \leq m + 2 R \quad 
		{\rm for\ each \ } s \in [0,1].
	\]
Notice that from a similar argument to \eqref{eq:2.9}, we may find 
a $c_{m,R}>0$ so that 
	\[
		\| u_1 - u \|_X \leq c_{m,R}^{-1} \int_0^1 \| \dot{\sigma} (\tau) \|_{\sigma(\tau)} \rd \tau 
		\leq 2 c_{m,R}^{-1} \, d_0 \left( (\theta, u) , ( \theta_1, u_1) \right) .
	\]
Hence, 
	\[
		\begin{aligned}
			d_{\R \times X}  \left( (\theta_1, u_1) , (\theta,u)  \right) 
			=  | \theta - \theta_1| + \| u - u_1 \|_X 
			\leq 2 \left( 1 + c_{m,R}^{-1} \right) d_0 \left( (\theta, u) , ( \theta_1, u_1) \right) 
		\end{aligned}
	\]
and \eqref{align:2.3} holds.

	On the other hand, 
let $(\theta,u) \in [-m,m] \times X$, $r \in (0,R]$ and select $\wt{C}_{m,R} \geq 1$ so that 
$\sup_{ |\theta | \leq m + R  } \| T_\theta \|_{ \cL (X) } \leq \wt{C}_{m,R}$. 
Remark that 
	\[
		| \theta + \tau (\theta_1 - \theta) | \leq m + \tau \wt{C}_{m,R}^{-1} r \leq m+ R \quad 
		\text{for any $\tau \in [0,1]$ and $(\theta_1,u_1) \in B^{d_{\R \times X}}_{ \wt{C}^{-1}_{m,R} \, r } (\theta, u)$}.
	\]
Joining $(\theta,u)$ and $(\theta_1,u_1)$ 
by the straight line, we find that 
	\[
		\begin{aligned}
			d_0 ( (\theta, u) , ( \theta_1, u_1 ) ) 
			& \leq \int_0^1 \left( |\theta_1-\theta|^2 + 
			\| T_{ \tau \theta_1 + (1-\tau) \theta } (u_1-u) \|_X^2 \right)^{1/2}
			\rd \tau 
			\\
			& \leq \wt{C}_{m,R} \int_0^1 
			\left( |\theta_1 - \theta| + \| u_1 - u \|_X \right) \rd \tau 
			= \wt{C}_{m,R} \, d_{\R \times X} 
			\left( (\theta_1, u_1) , (\theta, u) \right).
		\end{aligned}
	\]
Hence, \eqref{align:2.4} holds.

	(iii) \eqref{eq:2.5} follows from the fact 
	\[
		\left\| \left( \wt{\theta} , \wt{u} \right) \right\|_{(\theta,u)}^2 
		= \wt{\theta}^2 + \left\| T_{\theta + \alpha}  T_{-\alpha} \wt{u}  \right\|^2_{X} 
		=  \left\| \left( \wt{\theta} , T_{-\alpha} \wt{u} \right) \right\|_{(\theta+\alpha,u)}^2 
		\quad {\rm for\ every\ } \alpha \in \R, \ \left(\theta,u\right), \left(\wt{\theta},\wt{u} \right) \in \R \times X.
	\]

	(iv) Joining $(0,u)$ and $(0,v)$ by the straight line, 
we see that \eqref{eq:2.6} holds. 
	\end{proof}

	\begin{remark}\label{remark:2.7}
		As a consequence of Proposition \ref{proposition:2.6} (ii), 
		both of the continuity and 
		the local Lipschitz continuity for maps 
		are equivalent in $d_0$ and $d_{\R \times X}$.
	\end{remark}

	Next, for $F \in (\R \times X)^\ast$ and $(\theta,u) \in \R \times X$,  define $\| \cdot \|_{(\theta,u), \ast}$ by 
	\[
		\left\| F \right\|_{ (\theta, u) , \ast } := \sup \Set{ \la F , \left( \wt{\theta} , \wt{u} \right) \ra 
		 |  \left\| \left( \wt{\theta} , \wt{u} \right) \right\|_{ (\theta, u) } \leq 1}.
	\]
In what follows, we shall use $\| \cdot \|_{(\theta,u),\ast}$ for $\rd J(\theta,u)$. 
By noting $J(\theta, u) = I(T_\theta u) = J(0,T_\theta u)$ and 
	\[
		\rd_u J(\theta, u) [ \wt{u} ] = \rd I ( T_\theta u ) \left[ T_{\theta} \wt{u} \right] 
		= \rd_u J(0, T_\theta u) \left[ T_\theta \wt{u} \right], 
	\]
it follows from \eqref{eq:2.2} that 
	\begin{equation}\label{eq:2.10}
		\rd J(\theta,u) \left[ \left( \wt{\theta} , \wt{u} \right) \right] 
		= \rd_\theta J( \theta, u) \wt{\theta} 
		 + \rd I (T_\theta u) \left[ T_\theta \wt{u} \right] 
		 = \rd_\theta J( 0 , T_\theta u ) \wt{\theta} 
		 + \rd_u J (0, T_\theta u) \left[ T_\theta \wt{u} \right],
	\end{equation}
which implies 
	\begin{equation}\label{eq:2.11}
		\begin{aligned}
			&\left\| \rd J(\theta, u) \right\|_{ (\theta,u) , \ast } 
			= \sqrt{
				\left| \rd_{\theta} J(0,T_\theta u) \right|^2 + 
				\left\| \rd J (0,T_\theta u) \right\|_{X^\ast}^2 
			},
			\\
			&\text{the map $(\theta,u) \mapsto \left\| \rd J(\theta,u) \right\|_{(\theta,u),\ast} : 
			\R \times X \to \R $ is continuous}
		\end{aligned}
	\end{equation}
Furthermore, it is not difficult to check that for $c \in \R$ and $\delta > 0$,
	\begin{equation}\label{eq:2.12}
		0 < \inf \left\{
		\left\| \rd J (0, u) \right\|_{(\R \times X)^\ast} 
		\ \big|\ \left| J(0,u) - c \right| < \delta 
		\right\}
		\ \Leftrightarrow \ 
		0 < \inf \left\{
		\left\| \rd J (\theta, u) \right\|_{(\theta,u),\ast} 
		\ \big|\ \theta \in \R, \ \left| J(\theta,u) - c \right| < \delta 
		\right\}.
	\end{equation}

For $c,\delta,\rho > 0$, set 
	\[
		\begin{aligned}
			\wt{K}_c &:= \Set{ (\theta, u) \in \R \times X | J(\theta, u) = c, \ \rd J(\theta, u) = 0 },
			\\
			\wt{K}_{c,\delta,\rho} &:= 
			\Set{ (\theta, u) \in \R \times X |  
				\;
			| J(\theta, u) - c | < \delta, \ d_0 \left( (\theta, u) , \wt{K}_c \right) \geq \rho
			 }. 
		\end{aligned}
	\]
Notice that  \eqref{eq:2.10} shows that for every $\theta,\theta_1 \in \R$, 
	\begin{equation}\label{eq:2.13}
		u \in K_c \quad \Leftrightarrow \quad (0, u ) \in \wt{K}_c, \qquad 
		\quad 
		(\theta,u) \in \wt{K}_c \quad 
		\Leftrightarrow \quad (0,T_\theta u) \in \wt{K}_c \quad \Leftrightarrow \quad 
		(\theta + \theta_1, T_{-\theta_1} u) \in \wt{K}_c.
	\end{equation}

	\begin{proposition}\label{proposition:2.8}
		Assume that \PSPc \  holds. 
		\begin{enumerate}
			\item[\rm (i)] 
				If $( (\theta_n,u_n) )_n \subset \R \times X$ is a Palais--Smale sequence of $J$ at level $c$, that is, 
				$J(\theta_n,u_n) \to c$ and $\| \rd J (\theta_n,u_n) \|_{(\theta_n,u_n),\ast} \to 0$,  
				then $(T_{\theta_n} u_n)_n$ has a strongly convergent subsequence in $X$, $K_c \neq \emptyset$ and 
				$d_0( (\theta_n,u_n) , \wt{K}_c ) \to 0$.  
				In particular, if $K_c = \emptyset$ (equivalently $\wt{K}_c = \emptyset$), then there exists a $\delta_0>0$ 
				such that 
					\begin{equation}\label{eq:2.14}
						\delta_0 \leq \inf \left\{  \| \rd J(\theta, u) \|_{(\theta,u),\ast}  \;\middle|\;
						|J(\theta,u) - c | < \delta_0 \right\}. 
					\end{equation}
			\item[\rm (ii)] 
				Suppose $K_c \neq \emptyset$ (equivalently $\wt{K}_c \neq \emptyset$). 
				For each $\rho >0$, there exists a $\delta_\rho > 0$ such that 
				either $\wt{K}_{c,\delta_\rho,\rho/3} = \emptyset$ 
				or else $\wt{K}_{c,\delta_\rho , \rho / 3} \neq \emptyset$ and 
					\begin{equation}\label{eq:2.15}
						\delta_\rho \leq \inf \left\{ \| \rd J(\theta, u) \|_{(\theta,u),\ast} 
						\;\middle|\;
						(\theta, u) \in \wt{K}_{c,\delta_\rho,\rho/3} \right\}.
					\end{equation}
		\end{enumerate}
	\end{proposition}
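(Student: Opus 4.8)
The plan is to reduce both parts to the compactness hypothesis $\mathrm{(PSP)}_c$, using the identity \eqref{eq:2.11} to pass between a point $(\theta,u)$ and the gauge-fixed point $(0,T_\theta u)$, and the metric relations \eqref{eq:2.5}, \eqref{eq:2.6}, \eqref{eq:2.13} to control $d_0$-distances.

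\emph{Part (i).} Given a Palais--Smale sequence $((\theta_n,u_n))_n$ for $J$ at level $c$, put $v_n := T_{\theta_n}u_n$. From $J(\theta_n,u_n) = I(T_{\theta_n}u_n) = J(0,v_n)$ we get $I(v_n) = J(0,v_n) \to c$, and by \eqref{eq:2.11} the hypothesis $\|\rd J(\theta_n,u_n)\|_{(\theta_n,u_n),\ast}\to 0$ forces $|\rd_\theta J(0,v_n)|\to 0$ and $\|\rd_u J(0,v_n)\|_{X^\ast}\to 0$, hence $\|\rd J(0,v_n)\|_{(\R\times X)^\ast}\to 0$. Thus $(v_n)_n$ is admissible in Definition \ref{definition:2.2}, so after passing to a subsequence $v_n\to v_\infty$ in $X$; continuity of $I$ and of $\rd J(0,\cdot)$ then gives $I(v_\infty)=J(0,v_\infty)=c$ and $\rd J(0,v_\infty)=0$, i.e.\ $v_\infty\in K_c$ and, by \eqref{eq:2.13}, $(0,v_\infty)\in\wt{K}_c$; in particular $K_c\neq\emptyset$. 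For the distance estimate---the step I expect to be the most delicate, since $(\theta_n)_n$ need not be bounded and $d_0$ is not invariant under a naive translation---I would use \eqref{eq:2.13} to note that $(\theta_n,T_{-\theta_n}v_\infty)\in\wt{K}_c$ for every $n$, and then combine the genuine invariance \eqref{eq:2.5} (with $\alpha=-\theta_n$) with \eqref{eq:2.6}:
\[
d_0\big((\theta_n,u_n),\wt{K}_c\big)\leq d_0\big((\theta_n,u_n),(\theta_n,T_{-\theta_n}v_\infty)\big)=d_0\big((0,v_n),(0,v_\infty)\big)\leq\|v_n-v_\infty\|_X\to 0 .
\]
Since every subsequence of a Palais--Smale sequence is again one, a subsequence-of-a-subsequence argument (each subsequence produces some limit point in $K_c$, giving the same bound) promotes this to $d_0((\theta_n,u_n),\wt{K}_c)\to 0$ along the full sequence. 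The ``in particular'' clause is a routine contradiction: if no $\delta_0$ as in \eqref{eq:2.14} existed, picking $(\theta_n,u_n)$ with $|J(\theta_n,u_n)-c|<1/n$ and $\|\rd J(\theta_n,u_n)\|_{(\theta_n,u_n),\ast}<1/n$ yields a Palais--Smale sequence at level $c$, whence $K_c\neq\emptyset$---a contradiction.

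\emph{Part (ii).} This follows from (i) by contradiction. If no $\delta_\rho$ with the stated property existed, then (identifying the two alternatives via the convention $\inf\emptyset=+\infty$) for each $n\geq 1$ one would have $\wt{K}_{c,1/n,\rho/3}\neq\emptyset$ together with $\inf\{\|\rd J(\theta,u)\|_{(\theta,u),\ast}:(\theta,u)\in\wt{K}_{c,1/n,\rho/3}\}<1/n$. Choosing $(\theta_n,u_n)\in\wt{K}_{c,1/n,\rho/3}$ with $\|\rd J(\theta_n,u_n)\|_{(\theta_n,u_n),\ast}<1/n$ gives $|J(\theta_n,u_n)-c|<1/n\to 0$ and $\|\rd J(\theta_n,u_n)\|_{(\theta_n,u_n),\ast}\to 0$, i.e.\ a Palais--Smale sequence at level $c$; by part (i) then $d_0((\theta_n,u_n),\wt{K}_c)\to 0$, contradicting $d_0((\theta_n,u_n),\wt{K}_c)\geq\rho/3$ for all $n$. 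Apart from the metric bookkeeping in the displayed chain of (i), every step here is routine.
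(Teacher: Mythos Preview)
Your proof is correct and follows essentially the same route as the paper: reduce to $(0,v_n)$ with $v_n=T_{\theta_n}u_n$ via \eqref{eq:2.11}, apply $\mathrm{(PSP)}_c$, and control the $d_0$-distance using \eqref{eq:2.5}, \eqref{eq:2.6} and \eqref{eq:2.13}; part (ii) is the same contradiction against part (i). The only cosmetic differences are that the paper writes the distance step as the equality $d_0((\theta_n,u_n),\wt{K}_c)=d_0((0,v_n),\wt{K}_c)$ (using the $\wt{K}_c$-invariance in \eqref{eq:2.13} globally rather than picking the specific comparison point $(\theta_n,T_{-\theta_n}v_\infty)$), and in (ii) it first invokes the monotonicity $\wt{K}_{c,\delta_1,\rho/3}\subset\wt{K}_{c,\delta_2,\rho/3}$ to split into the two cases before running the contradiction.
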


	\begin{proof}
(i) Write $v_n := T_{\theta_n} u_n$ and notice that $I(v_n) = J(\theta_n,u_n) \to c$, 
$ \rd_\theta J(0,v_n) = \rd_\theta J(\theta_n,u_n)  \to 0$ and 
$\| \rd J(0,v_n) \|_{X^\ast} \to 0$ thanks to \eqref{eq:2.2} and \eqref{eq:2.11}. 
Thus, \PSPc asserts that 
$(v_n)_n$ has a strongly convergent subsequence in $X$ and its limit belongs to $K_c$. 
If there exist $\e_0>0$ and a subsequence $(\theta_{n_k}, u_{n_k})$ such that 
$d_0 (( \theta_{n_k}, u_{n_k} ), \wt{K}_c) \geq \e_0$, as in the above, subtracting a subsequence 
further if necessary (we still write $(n_k)$), we find that $T_{\theta_{n_k}} u_{n_k} \to v_0$ strongly in $X$ 
where $v_0 \in K_c$. Now from \eqref{eq:2.5}, \eqref{eq:2.6}, $v_0 \in K_c$ and \eqref{eq:2.13}, it follows that 
	\[
		\e_0 \leq d_0 \left( (\theta_{n_k}, u_{n_k}) , \wt{K}_c \right) 
		= d_0 \left( (0, T_{\theta_{n_k}} u_{n_k}  ) , \wt{K}_c \right) 
		\leq \| T_{\theta_{n_k}} u_{n_k} - v_0 \|_X \to 0,
	\]
which is a contradiction. Thus, $d_0( (\theta_n,u_n) , \wt{K}_c ) \to 0$.

	Finally, if $K_c = \emptyset$, then the above fact implies that 
there is no Palais--Smale sequence of $J$ at level $c$. Therefore, \eqref{eq:2.14} holds.

(ii) Fix $\rho >0$. We first remark 
$ \wt{K}_{c,\delta_1 , \rho/3} \subset \wt{K}_{c,\delta_2,\rho/3}$ for any $\delta_1 < \delta_2$. 
Hence, either there exists a $\wt{\delta}_\rho >0$ such that $\wt{K}_{c,\delta,\rho/3} = \emptyset$ 
for every $\delta \in (0,\wt{\delta}_\rho)$, or else $\wt{K}_{c,\delta,\rho/3} \neq \emptyset$ 
for each $\delta >0$. 
Thus, it suffices to show \eqref{eq:2.15} when $\wt{K}_{c,\delta,\rho/3} \neq \emptyset$ 
for any $\delta > 0$. 
We argue indirectly and suppose that there exists $(\theta_n,u_n) \in \wt{K}_{c,n^{-1},\rho/3}$ such that 
$\| \rd J(\theta_n,u_n) \|_{(\theta_n,u_n),\ast} \leq n^{-1} \to 0$. 
By (i) and $(\theta_n,u_n) \in \wt{K}_{c,n^{-1},\rho/3}$, 
we obtain a contradiction: $0 < \rho/3 \leq d_0( (\theta_n,u_n) , \wt{K}_c ) \to 0$. Hence, \eqref{eq:2.15} holds. 
	\end{proof}

	Now we state a deformation lemma for $J(\theta,u)$ under \PSPc.

	\begin{lemma}\label{lemma:2.9}
		For $\rho > 0$, put
			\[
				\wt{N}_\rho ( \wt{K}_c ) := \Set{ (\theta, u) \in \R \times X  |
				d_0 \left( (\theta, u) , \wt{K}_c \right) < \rho }
			\]
		and suppose that \PSPc\  holds. 
		Then for each $\be, \rho > 0$ there exist $\e \in (0,\be)$ 
		and $\wt{\eta} \in C( [0,1] \times \R \times X , \R \times X )$ such that 
		\begin{enumerate}
			\item[\rm (i)] 
			$\wt{\eta} (0,\theta,u) = (\theta,u)$ for each $(\theta,u) \in \R \times X$. 
			\item[\rm (ii)] 
			If $I$ is even, then 
			for each $(t,\theta,u) \in \R \times \R \times X$, 
			$\wt{\eta}_\theta (t,\theta,-u) = \wt{\eta}_\theta (t,\theta,u)$ and 
			$\wt{\eta}_u (t,\theta,-u) = - \wt{\eta}_u (t,\theta,u)$ where 
			$\wt{\eta} (t,\theta,u) = (\wt{\eta}_\theta (t,\theta,u) , \wt{\eta}_u (t,\theta , u) )$. 
			\item[\rm (iii)] 
			$\wt{\eta} (t,\theta,u) = (\theta, u)$ provided 
			$(\theta,u) \in [J \leq c - \be] := \set{ (\theta,u) \in \R \times X | J(\theta,u) \leq c - \be}$. 
			\item[\rm (iv)] 
			A map $t \mapsto J(\wt{\eta}(t,\theta,u)) : [0,1] \to \R$ is nonincreasing for any $(\theta,u) \in \R \times X$. 
			\item[\rm (v)] 
			$\wt{\eta} (1, [J \leq c + \e] \setminus \wt{N}_\rho ( \wt{K}_c ) ) \subset [ J \leq c - \e]$ and 
			$\wt{\eta} (1, [J \leq c + \e]) \subset [ J \leq c - \e ] \cup \wt{N}_\rho ( \wt{K}_c )$. 
			When $\wt{K}_c = \emptyset$, $\wt{\eta} (1, [ J \leq c + \e]) \subset [J\leq c - \e]$ holds.  
		\end{enumerate}
	\end{lemma}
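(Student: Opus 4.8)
The strategy is the standard deformation-lemma construction carried out in the metric space $(\R\times X,d_0)$ rather than in $(\R\times X,d_{\R\times X})$, exactly as in \cite{HT-18}. The point of using $d_0$ is that along orbits $\theta\mapsto T_\theta u$ the functional $J$ is constant, so the pseudo-gradient flow built with respect to $d_0$ does not "waste" energy moving in the group direction, and Proposition \ref{proposition:2.6}(iii)--(iv) lets us transfer estimates back to the original metric. First I would fix $\be,\rho>0$. If $\wt K_c=\emptyset$, Proposition \ref{proposition:2.8}(i) gives $\delta_0>0$ with \eqref{eq:2.14}; otherwise Proposition \ref{proposition:2.8}(ii) gives $\delta_\rho>0$ with \eqref{eq:2.15}. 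In either case we obtain a closed "annular" region
\[
A:=\Set{(\theta,u)\in\R\times X | |J(\theta,u)-c|\le \delta,\ d_0((\theta,u),\wt K_c)\ge \rho/3}
\]
(with $\wt K_c=\emptyset$ reducing to the slab $|J-c|\le\delta$) on which $\|\rd J(\theta,u)\|_{(\theta,u),\ast}\ge\delta$, after shrinking $\delta\le\be$. On $A$, by the definition of the dual norm $\|\cdot\|_{(\theta,u),\ast}$ and \eqref{eq:2.11}, one can choose at each point a unit vector (in $\|\cdot\|_{(\theta,u)}$) on which $\rd J$ is at least $\delta/2$; a partition-of-unity argument over the (locally Lipschitz — see Remark \ref{remark:2.7}) pseudo-gradient vectors produces a locally Lipschitz vector field $V:\cN\to\R\times X$, defined on a $d_0$-neighbourhood $\cN$ of $A$, with $\|V(\theta,u)\|_{(\theta,u)}\le 1$ and $\rd J(\theta,u)[V(\theta,u)]\ge \delta/4$ there. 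If $I$ is even, one further symmetrizes $V$ using the relations in \eqref{eq:2.13} so that $V_\theta$ is even and $V_u$ is odd in $u$; this is what yields (ii).

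Next I would cut off $V$ to produce a complete flow. Let $\chi$ be a locally Lipschitz function on $\R\times X$, valued in $[0,1]$, equal to $1$ on $\set{|J-c|\le \e,\ d_0(\cdot,\wt K_c)\ge 2\rho/3}$ and equal to $0$ outside $\set{|J-c|\le \delta,\ d_0(\cdot,\wt K_c)\ge \rho/3}$ and also outside the whole strip if needed, with $\e\in(0,\delta)$ chosen small at the end. Consider the Cauchy problem $\partial_t\sigma=-\chi(\sigma)\,V(\sigma)$, $\sigma(0,\theta,u)=(\theta,u)$. Because $\|{-\chi V}\|_{\sigma}\le 1$, solutions exist for all $t\in[0,1]$ by completeness of $(\R\times X,d_0)$ (Proposition \ref{proposition:2.6}(i)): the $d_0$-length of any orbit on $[0,1]$ is $\le 1$, so no finite-time blow-up in $d_0$, and Proposition \ref{proposition:2.6}(ii) turns this into genuine global existence and continuity of $(t,\theta,u)\mapsto\sigma(t,\theta,u)$ in the product topology. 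Set $\wt\eta:=\sigma$ restricted to $[0,1]$. Properties (i), (iii), (iv) are then immediate: (i) is the initial condition; (iii) holds because $\chi\equiv0$ on $[J\le c-\be]\supset[J\le c-\delta]$ so the flow is stationary there; (iv) holds because $\tfrac{d}{dt}J(\sigma)=-\chi(\sigma)\rd J(\sigma)[V(\sigma)]\le 0$. The evenness property (ii) follows from uniqueness of solutions of the ODE together with the equivariance of $\chi V$ under $u\mapsto -u$ (using that $I$, hence $J$, is even and that $V$ was symmetrized).

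The quantitative descent (v) is the main point and the step that needs the most care. Fix a starting point $(\theta,u)\in[J\le c+\e]$ and let $\sigma(t)$ be its orbit; write $\sigma(t)=(\theta(t),u(t))$. If there is some $t_\ast\in[0,1]$ with $J(\sigma(t_\ast))\le c-\e$ we are done by monotonicity. Otherwise $c-\e< J(\sigma(t))\le c+\e$ for all $t$, so along the whole orbit $\chi(\sigma(t))=1$ as long as $d_0(\sigma(t),\wt K_c)\ge 2\rho/3$; in that regime $\tfrac{d}{dt}J(\sigma(t))\le -\delta/4$, forcing $J(\sigma(1))\le c+\e-\delta/4<c-\e$ once $\e<\delta/16$ — contradiction. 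Hence the orbit must enter $\set{d_0(\cdot,\wt K_c)<2\rho/3}$ at some time. The delicate part is to show that if the orbit \emph{started} outside $\wt N_\rho(\wt K_c)$, i.e. with $d_0((\theta,u),\wt K_c)\ge\rho$, it still descends below $c-\e$; this is a "the flow cannot sneak past the annulus" argument: since the $d_0$-speed is $\le1$ and the annular region $\set{\rho/3\le d_0(\cdot,\wt K_c)\le 2\rho/3}$ (where $\chi$ is still identically $1$ outside level $c-\e$) has $d_0$-width $\rho/3$, the orbit must spend $d_0$-time, hence actual time, at least $\rho/3$ traversing it, during which $J$ drops by at least $(\delta/4)(\rho/3)$; choosing $\e<\tfrac{1}{16}\min\set{\delta,\delta\rho/3}$ makes $J(\sigma(1))\le c-\e$. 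The second inclusion in (v), $\wt\eta(1,[J\le c+\e])\subset[J\le c-\e]\cup\wt N_\rho(\wt K_c)$, then follows: any orbit not ending in $\wt N_\rho(\wt K_c)$ either never left it (impossible since it started in $[J\le c+\e]$ — trivial if it was always in $\wt N_\rho$, in which case it ends there) or spent enough time in the annulus to drop below $c-\e$. The case $\wt K_c=\emptyset$ is simpler: there $\chi\equiv1$ on the whole slab $\set{|J-c|\le\e}$, $\tfrac{d}{dt}J\le-\delta/4$ throughout, and $\e<\delta/16$ forces every orbit from $[J\le c+\e]$ into $[J\le c-\e]$. I expect the bookkeeping of which $\e$ works — balancing $\be$, $\delta$ (from Proposition \ref{proposition:2.8}), $\rho$, and the pseudo-gradient constant $\delta/4$ — to be the only real obstacle; everything else is a routine transcription of the classical deformation argument into the $d_0$ setting, with Proposition \ref{proposition:2.6} doing the work of moving between the two metrics.
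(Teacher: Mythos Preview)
Your plan is essentially the paper's own argument: build a pseudo-gradient field in the $\|\cdot\|_{(\theta,u)}$-geometry, cut it off by a level function and by $d_0$-distance to $\wt K_c$, integrate, and use an annulus-crossing estimate for (v). Two concrete slips to fix. First, in your crossing argument you work in the annulus $\{\rho/3\le d_0(\cdot,\wt K_c)\le 2\rho/3\}$ and assert $\chi\equiv 1$ there; by your own definition $\chi\equiv 1$ only where $d_0\ge 2\rho/3$, so you must instead use the annulus $\{2\rho/3\le d_0\le \rho\}$ (width $\rho/3$, $\chi\equiv 1$), exactly as the paper does in \eqref{eq:2.24}. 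Second, obtaining a locally Lipschitz $V$ with the pointwise bound $\|V(\theta,u)\|_{(\theta,u)}\le 1$ is not free, because the norm $\|\cdot\|_{(\theta,u)}$ itself varies only continuously in $\theta$; the paper circumvents this by first building a Palais pseudo-gradient $W$ and then dividing by a locally Lipschitz scalar $w$ approximating $\|W(\theta,u)\|_{(\theta,u)}$ (see \eqref{eq:2.17}), which gives speed $\le 3/2$ rather than $\le 1$. Either adopt that device or simply accept a uniform bound $\|V\|_{(\theta,u)}\le C$ and absorb $C$ into the choice of $\e$. With these corrections your argument matches the paper's.
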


	\begin{proof}
Put $ M := \set{ (\theta, u) \in \R \times X | \rd J (\theta, u) \neq 0 }$. 
Following Palais \cite{P-66} and noting Proposition \ref{proposition:2.6}, Remark \ref{remark:2.7} and 
the continuity of $(\theta,u) \mapsto T_\theta u$, 
we may find a locally Lipschitz continuous vector field 
$W$ on $ M $ such that for each $(\theta,u) \in M $, 
	\begin{equation}\label{eq:2.16}
		\left\| W (\theta, u) \right\|_{(\theta,u)} \leq \frac{3}{2} \left\| \rd J(\theta, u) \right\|_{(\theta,u), \ast} , 
		\quad 
		\rd J(\theta,u) W (\theta, u) \geq \left\| \rd J(\theta,u) \right\|_{(\theta,u),\ast}^2
		.
	\end{equation}
If $I$ is even, then by $J(\theta, - u) = I (T_\theta (-u)) = I(-T_\theta u) = J(\theta, u)$, 
we may also assume that $W$ satisfies 
	\[
		W_\theta (\theta, -u ) = W_\theta (\theta, u), \quad 
		W_u (\theta, - u) = - W_u (\theta, u)
	\]
where $W (\theta,u) = ( W_\theta (\theta,u) , W_u(\theta,u) )$. 
In fact, replace $W$ by the following:
	\[
		\left( \frac{1}{2} \left[ W_\theta (\theta, u) + W_\theta (\theta, -u) \right], 
		\ 
		\frac{1}{2} \left[ W_u(\theta, u) - W_u(\theta, - u) \right]
		 \right). 
	\]
In addition, using a similar argument to the above, we may also find 
a locally Lipschitz function $w : M \to (0,\infty)$ such that 
for every $(\theta, u) \in M$, 
	\begin{equation}\label{eq:2.17}
		\left| w(\theta, u) - \left\| W(\theta, u) \right\|_{(\theta,u)} \right| 
		\leq \frac{ \left\| W(\theta, u) \right\|_{(\theta,u)}  }{3}, 
	\end{equation}
and if $I$ is even, then we also have $w(\theta,-u) = w(\theta, u)$.

	Firstly, we treat the case $\wt{K}_c \neq \emptyset$. 
Let $\delta_\rho > 0$ be a number appearing in Proposition \ref{proposition:2.8} (ii). 
Remark that $K_{c,\delta_\rho, \rho/3} \subset M $ if $K_{c,\delta_\rho,\rho/3} \neq \emptyset$ 
and that by shrinking $\be >0$ if necessary, we may assume 
	\begin{equation}\label{eq:2.18}
		\be < \delta_\rho. 
	\end{equation}

	Next, we may choose a locally Lipschitz continuous function $\varphi : \R \times X \to [0,1]$ so that 
	\begin{equation}\label{eq:2.19}
		\varphi \equiv 1 \quad {\rm on} \ M \setminus \wt{N}_{\frac{2\rho}{3}} ( \wt{K}_c ), 
		\quad \varphi \equiv 0 \quad {\rm on} \ \wt{N}_{\frac{\rho}{3}} ( \wt{K}_c ), 
		\quad \text{and} \quad 
		\varphi ( \theta, - u) = \varphi (\theta, u) \ \text{if $I$ is even}.
	\end{equation}
Indeed, since $(\theta, u) \in \wt{K}_c$ (resp. $(\theta, u) \in \wt{N}_\rho (\wt{K}_c)$) is 
equivalent to $(\theta, -u) \in \wt{K}_c$ (resp. $(\theta, -u) \in \wt{N}_\rho (\wt{K}_c)$) 
provided $I$ is even, we set 
	\[
		\begin{aligned}
			\psi_1(\theta, u)& := \inf \left\{ \left\| (\theta ,u) - (\wt{\theta} , \wt{u}) \right\|_{\R \times X} 
			\;\middle|\; (\wt{\theta} , \wt{u}) \not \in \wt{N}_{\frac{5\rho}{9}} ( \wt{K}_c )  \right\},
			\\
			\psi_2(\theta, u)& := \inf \left\{ \left\| (\theta ,u) - (\wt{\theta} , \wt{u}) \right\|_{\R \times X} 
			\;\middle|\; (\wt{\theta} , \wt{u})  \in \wt{N}_{\frac{4\rho}{9}} ( \wt{K}_c )  \right\}
			.
		\end{aligned}
	\]
Then it is easily seen that $\psi_i$ are 
locally Lipschitz on $\R \times X$ and $\psi_i(\theta, -u) = \psi_i(\theta,u)$ 
if $I$ is even. Thus, 
	\[
		\varphi(\theta, u) := \frac{\psi_2(\theta, u)}{\psi_1(\theta, u) + \psi_2 (\theta, u)}
	\]
satisfies \eqref{eq:2.19}.

	Next, we pick up a $\psi \in C^1(\R, \R)$ so that $ 0 \leq \psi(t) \leq 1$ on $\R$ and 
	\begin{equation}\label{eq:2.20}
		\psi (t) = 1 \quad {\rm if} \ t \in \left[ c - \frac{\be}{2} , c + \frac{\be}{2} \right], \quad 
		\psi (t) = 0 \quad {\rm if} \ t \in (-\infty , c - \be ] \cup [c+\be,\infty)
		.
	\end{equation}
We consider the following ODE:
	\begin{equation}\label{eq:2.21}
		\frac{\rd \wt{\eta}}{\rd t} 
		= - \varphi ( \wt{\eta} (t) ) \psi \left( J( \wt{\eta} (t) ) \right) \frac{W(\wt{\eta}(t))}
		{w ( \wt{\eta} (t) )  }, \quad 
		\wt{\eta} (0,\theta,u) = (\theta, u). 
	\end{equation}
By Proposition \ref{proposition:2.8} (ii), \eqref{eq:2.17}, \eqref{eq:2.18} and \eqref{eq:2.20}, 
we see that the right hand side of \eqref{eq:2.21} is well-defined, bounded 
and locally Lipschitz on $\R \times X$, 
hence, \eqref{eq:2.21} is uniquely solvable for every $(\theta, u ) \in \R \times X$ and 
$\wt{\eta} \in C(\R \times \R \times X, \R \times X)$.

	By the choices of $\varphi, \psi, w$ and $W$,  properties (i)--(iv) are easily checked. 
Therefore, we will only prove (v) for some $\e \in (0,\be)$. We first fix $\e $ so that 
	\begin{equation}\label{eq:2.22}
		0 < \e < \min \left\{ \frac{\be}{2} , \frac{\delta_\rho}{4} , \frac{\rho \delta_\rho}{18} \right\}.
	\end{equation}
Let $(\theta, u) \in [J \leq c + \e] $. Our next task is to show 
$\wt{\eta} (1,\theta, u) \in [J \leq c - \e] \cup \wt{N}_{\rho}( \wt{K}_c)$. 
If $\eta (1,\theta,u) \not \in [J \leq c - \e]$, then 
it follows from (iv) that
	\begin{equation}\label{eq:2.23}
		c - \e \leq J( \wt{\eta} (t , \theta,u) ) \leq c + \e \quad 
		{\rm for\ every} \ t \in [0,1].
	\end{equation}
We divide the argument into two cases: 
	\begin{enumerate}
		\item[(I)] 
		$\wt{\eta} (t , \theta,u) \not \in \wt{N}_{\frac{2\rho}{3}} ( \wt{K}_c ) $ for every $ t \in [0,1]$. 
		\item[(II)]
		There exists a $t_0 \in [0,1]$ such that $\wt{\eta}(t_0 , \theta,u) \in \wt{N}_{\frac{2\rho}{3}} ( \wt{K}_c )$. 
	\end{enumerate}

	In Case (I), it follows from \eqref{eq:2.22} and \eqref{eq:2.23} that 
$\wt{\eta} (t) = \wt{\eta} (t,\theta,u) \in \wt{K}_{c,\delta_\rho,\rho/3}$ and 
$ \| \rd J ( \wt{\eta} (t) ) \|_{\wt{\eta}(t),\ast} \geq \delta_\rho$ for every $ t \in [0,1]$. 
Therefore, by $w(\theta, u) \leq 2 \| J(\theta, u) \|_{(\theta,u),\ast} $ 
due to \eqref{eq:2.17} and \eqref{eq:2.16}, 
we get 
	\[
		\frac{\rd}{\rd t} J ( \wt{\eta} (t ) ) = - \rd J( \wt{\eta} ( t ) ) 
		\left[  \frac{W(\wt{\eta}(t))} {  w( \wt{\eta}(t) ) } \right] 
		\leq -  \frac{1}{2} \| \rd J ( \wt{\eta} (t) ) \|_{ \wt{\eta} (t) , \ast } 
		\leq - \frac{1}{2} \delta_\rho
	\]
and by \eqref{eq:2.22}, one has 
	\[
		J ( \wt{\eta} (1) ) \leq J ( \wt{\eta} (0 , \theta,u) ) - \frac{1}{2} \delta_\rho 
		\leq c + \e - \frac{1}{2} \delta_\rho < c - \e.
	\]
This contradicts \eqref{eq:2.23} and Case (I) does not occur.

	In Case (II), if there exists a $t_1 \in [0,1]$ such that 
$\wt{\eta} (t_1 , \theta,u) \in \partial \wt{N}_\rho ( \wt{K}_c )$, 
then we may find $[t_2,t_3] \subset [0,1]$ so that 
	\begin{equation}\label{eq:2.24}
		\begin{aligned}
			&\wt{\eta} ( (t_2,t_3) , \theta, u ) \subset \wt{N}_\rho ( \wt{K}_c ) \setminus \wt{N}_{\frac{2\rho}{3}} (\wt{K}_c) 
			\subset \wt{K}_{c,\delta_\rho, \frac{\rho}{3}}, 
			\\
			&\text{either }
			(\wt{\eta} (t_2), \wt{\eta} (t_3) ) 
			\in \partial  \wt{N}_\rho ( \wt{K}_c ) \times \partial \wt{N}_{\frac{2\rho}{3}} (\wt{K}_c)
			\text{ or else } 
			(\wt{\eta} (t_2), \wt{\eta} (t_3) ) \in \partial  \wt{N}_{\frac{2\rho}{3}} ( \wt{K}_c ) \times \partial \wt{N}_{\rho} (\wt{K}_c).
		\end{aligned}
	\end{equation}
Since $\| \rd \wt{\eta} (t) / \rd t  \|_{\wt{\eta} (t) } \leq 3/2$ and it follows from 
\eqref{eq:2.20}, \eqref{eq:2.22}, \eqref{eq:2.23} and \eqref{eq:2.24} that 
	\[
		\frac{3}{2} \left( t_3-t_2 \right)  
		\geq \int_{t_2}^{t_3} \left\| \frac{\rd \wt{\eta}}{\rd t} (\tau) \right\|_{\wt{\eta} (\tau)} \rd \tau 
		\geq d_0 \left( \wt{\eta}(t_2) , \wt{\eta} (t_3) \right) \geq \frac{\rho}{3},
		\quad \frac{\rd}{\rd t} J (\wt{\eta} (t)) \leq - \frac{1}{2} \delta_\rho \quad 
		{\rm for\ all} \ t \in [t_2,t_3],
	\]
we obtain 
	\[
		\begin{aligned}
			J( \wt{\eta} (1) ) \leq J ( \wt{\eta} (t_3) ) 
			&= J( \wt{\eta} (t_2) ) + \int_{t_2}^{t_3} \frac{\rd}{\rd \tau} J ( \wt{\eta} (\tau) ) \rd \tau 
			\\
			&\leq J(\wt{\eta} (t_2)) - \int_{t_2}^{t_3} \frac{1}{2} \delta_\rho \rd \tau 
			\leq c + \e - \frac{\rho \delta_\rho}{9} < c - \e.
		\end{aligned}
	\]
This is a contradiction. From the above argument, 
we infer that $\wt{\eta} (t , \theta, u) \in \wt{N}_\rho ( \wt{K}_c )$ for all $t \in [0,1]$ 
when $(\theta,u) \in [J \leq c + \e]$ and $\wt{\eta} (1,\theta,u) \not\in [J \leq c - \e]$. 
Hence, we conclude that $\wt{\eta} (1, [J \leq c + \e] ) \subset [J \leq c - \e] \cup \wt{N}_\rho(\wt{K}_c)$.

	When $(\theta, u) \in [J \leq c + \e] \setminus \wt{N}_\rho( \wt{K}_c )$, 
if $\wt{\eta} (1, \theta, u) \not \in [ J \leq c - \e]$, then 
by (iv), one sees $(\theta,u) \in \wt{K}_{c,\delta_\rho,\rho/3}$. 
As in the above, we may see that Case (I) does not occur and 
$\wt{\eta} ([0,1], (\theta,u)) \cap \wt{N}_{2\rho/3} ( \wt{K}_c ) \neq \emptyset$. 
Thus, \eqref{eq:2.24} holds, however, again we get a contradiction $J(\wt{\eta} (1)) < c - \e$. 
Hence, $\wt{\eta} (1, [ J \leq c + \e ] \setminus \wt{N}_\rho ( \wt{K}_c )  ) \subset [J \leq c - \e]$. 
Therefore, Lemma \ref{lemma:2.9} holds when $\wt{K}_c \neq \emptyset$.

	Secondly, we deal with the case $\wt{K}_c = \emptyset$, 
and let $\delta_0>0$ be a number in Proposition \ref{proposition:2.8} (i) and 
$\be > 0$ satisfy $0 < \be < \delta_0$. Then instead of \eqref{eq:2.21}, we shall consider 
	\begin{equation}\label{eq:2.25}
		\frac{\rd \wt{\eta}}{\rd t} 
		= -  \psi \left( J( \wt{\eta} (t) ) \right) \frac{W(\wt{\eta}(t))}
		{w ( \wt{\eta} (t) ) }, \quad 
		\wt{\eta} (0,\theta,u) = (\theta, u). 
	\end{equation}
Select an $\e > 0$ so that $\e < \min \{  \be/2,  \delta_0 / 4 \} $. Since it is easy to check (i)--(iv), 
we shall prove $\wt{\eta} (1, [J \leq c + \e] ) \subset [J \leq c - \e]$. 
If $(\theta,u) \in [J \leq c + \e]$ and $\wt{\eta} (1,\theta, u) \not \in [J \leq c - \e]$, then 
$ c - \e < J( \wt{\eta}(t,\theta,u) ) \leq c + \e$ for all $ t \in [0,1]$. 
Thus, \eqref{eq:2.25}, Proposition \ref{proposition:2.8} (i), \eqref{eq:2.16} and  \eqref{eq:2.17} imply 
	\[
		\frac{\rd}{\rd t} J (\wt{\eta} (t,\theta, u) ) \leq - \frac{1}{2}  \| \rd J( \wt{\eta} (t,\theta,u) ) \|_{\wt{\eta}(t)}  
		\leq - \frac{1}{2} \delta_0.
	\]
Hence, we get a contradiction:
	\[
		c + \e \leq J ( \wt{\eta} (1,\theta,u) ) 
		\leq J ( \wt{\eta} (0,\theta,u) ) - \frac{1}{2} \delta_0 \leq c + \e - \frac{1}{2} \delta_0 
		< c - \e .
	\]
Thus, (v) holds and we complete the proof. 
	\end{proof}

	\begin{remark}\label{remark:2.10}
		By the proof of Lemma \ref{lemma:2.9}, 
		if $0< \inf \{ \| \rd J(\theta, u) \|_{(\theta,u),\ast} \ |\ |J(\theta,u)- c| < \be \}$ for some $\be>0$, 
		then $\wt{K}_c = \emptyset$ and 
		the assertions in Lemma \ref{lemma:2.9} still hold. 
	\end{remark}

To prove the deformation lemma for $I$ from $\wt{\eta}$, we define 
	\[
		i (u) := (0,u) : X \to \R \times X, \quad \pi (\theta, u) := T_\theta u : \R \times X \to X.
	\]
Note that $J(\theta, u) = I(\pi (\theta, u))$.

	\begin{lemma}\label{lemma:2.11}
		For every $\rho \in (0,1)$ there exists an $R(\rho) > 0$ such that 
			\[
				\pi \left( \wt{N}_\rho ( \wt{K}_c  ) \right) \subset N_{R(\rho)} (K_c), \quad 
				i \left( X \setminus N_{R(\rho)} (K_c) \right) 
				\subset \left( \R \times X \right) \setminus \wt{N}_\rho ( \wt{K}_c ), 
				\quad \lim_{\rho \to 0} R(\rho) = 0
			\]
		where $N_\alpha (A) := \set{ u \in X | \| u - A \|_X < \alpha }$. 
	\end{lemma}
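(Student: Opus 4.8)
The plan is to dispose of the trivial case $K_c=\emptyset$ first and then reduce both inclusions to a single one-sided estimate. If $K_c=\emptyset$, then $\wt{K}_c=\emptyset$ by \eqref{eq:2.13}, hence $\wt{N}_\rho(\wt{K}_c)=\emptyset$ and $N_\alpha(K_c)=\emptyset$ for every $\alpha>0$, so any choice such as $R(\rho):=\rho$ works. So assume $K_c\neq\emptyset$; since $I$ satisfies \PSPc, the set $K_c$ is compact, in particular bounded. The reduction I would use is the claim that there is $R(\rho)>0$ with $R(\rho)\to0$ as $\rho\to0$ such that, for all $v\in X$,
\[
	d_0\big((0,v),\wt{K}_c\big)<\rho \quad\Longrightarrow\quad \|v-K_c\|_X<R(\rho).
\]
Granting the claim, the second inclusion is its contrapositive; and for the first one, given $(\theta,u)\in\wt{N}_\rho(\wt{K}_c)$, I would first note, using the translation identity \eqref{eq:2.5} (with $\alpha=-\theta$) together with the invariance of $\wt{K}_c$ expressed in \eqref{eq:2.13}, that $d_0((0,T_\theta u),\wt{K}_c)\leq d_0((\theta,u),\wt{K}_c)<\rho$, and then apply the claim to $v:=T_\theta u=\pi(\theta,u)$.

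For the claim, fix $v$ with $d_0((0,v),\wt{K}_c)<\rho<1$ and choose $(\theta_\ast,u_\ast)\in\wt{K}_c$ with $d_0((0,v),(\theta_\ast,u_\ast))<\rho$. The key move is to replace $(\theta_\ast,u_\ast)$ by a genuine element of $K_c$: put $w:=T_{\theta_\ast}u_\ast$, so that $w\in K_c$ by \eqref{eq:2.13}, and translate by $\alpha=-\theta_\ast$ in \eqref{eq:2.5} to get $d_0((-\theta_\ast,T_{\theta_\ast}v),(0,w))<\rho$. Since $(0,w)\in[-1,1]\times X$ and $\rho\leq1$, Proposition \ref{proposition:2.6} (ii) with $m=R=1$, $r=\rho$ converts this into
\[
	|\theta_\ast|+\|T_{\theta_\ast}v-w\|_X=d_{\R\times X}\big((0,w),(-\theta_\ast,T_{\theta_\ast}v)\big)\leq C_{1,1}\,\rho.
\]
Then I would write $v-w=T_{-\theta_\ast}(T_{\theta_\ast}v-w)+(T_{-\theta_\ast}-\mathrm{id})w$ and use \eqref{eq:2.1} to obtain $\|v-w\|_X\leq\big(\sup_{|\theta|\leq1}\|T_\theta\|_{\cL(X)}\big)C_{1,1}\,\rho+\omega(C_{1,1}\rho)$, where $\omega(\delta):=\sup\{\|(T_\theta-\mathrm{id})z\|_X : |\theta|\leq\delta,\ z\in K_c\}$. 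Since $w\in K_c$, this gives $\|v-K_c\|_X\leq\big(\sup_{|\theta|\leq1}\|T_\theta\|_{\cL(X)}\big)C_{1,1}\,\rho+\omega(C_{1,1}\rho)$, so the claim holds with $R(\rho):=\big(\sup_{|\theta|\leq1}\|T_\theta\|_{\cL(X)}\big)C_{1,1}\rho+\omega(C_{1,1}\rho)+\rho$ (the final summand $\rho$ makes the inequality strict and $R(\rho)>0$), provided $\omega(\delta)\to0$ as $\delta\to0^+$.

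Establishing $\omega(\delta)\to0$ is the only genuinely non-algebraic step, and I expect it to be the main obstacle: the action is not assumed to be smooth, so one cannot control $\|(T_{-\theta_\ast}-\mathrm{id})w\|_X$ by differentiating along $d_0$-minimizing paths. Instead I would exploit the compactness of $K_c$. The map $(\theta,z)\mapsto T_\theta z$ is continuous on $\R\times X$ (recorded after \eqref{eq:2.1}) and equals $z$ at $\theta=0$, so a tube-lemma argument works: given $\e>0$, for each $z_0\in K_c$ pick $\delta_{z_0}>0$ and an open $U_{z_0}\ni z_0$ with $\|(T_\theta-\mathrm{id})z\|_X<\e$ for $|\theta|<\delta_{z_0}$ and $z\in U_{z_0}$, extract a finite subcover $U_{z_1},\dots,U_{z_n}$ of $K_c$, and set $\delta:=\min_j\delta_{z_j}$; then $\omega(\delta')\leq\e$ for $\delta'<\delta$. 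Finiteness of $\omega(\delta)$ for each $\delta>0$ follows from \eqref{eq:2.1} and the boundedness of $K_c$. This yields $\omega(\delta)\to0$, hence $R(\rho)>0$ and $R(\rho)\to0$, completing the proof; everything else is just bookkeeping with \eqref{eq:2.5}, \eqref{eq:2.13}, \eqref{eq:2.1} and Proposition \ref{proposition:2.6}.
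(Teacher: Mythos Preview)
Your proof is correct and follows essentially the same approach as the paper: translate via \eqref{eq:2.5} and \eqref{eq:2.13} to the slice $\{0\}\times X$, compare $d_0$ with $d_{\R\times X}$, and control the remaining $T_\theta$-drift on $K_c$ by compactness together with \eqref{T1}; the only stylistic difference is that the paper redoes the path estimate by hand whereas you invoke Proposition~\ref{proposition:2.6}~(ii) as a black box. One small slip: Proposition~\ref{proposition:2.6}~(ii) only yields $|\theta_\ast|<C_{1,1}\rho<C_{1,1}$, so your bound on $\|T_{-\theta_\ast}\|_{\cL(X)}$ should use $\sup_{|\theta|\leq C_{1,1}}\|T_\theta\|_{\cL(X)}$ rather than $\sup_{|\theta|\leq 1}$, but this is harmless.
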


	\begin{proof}
Let $\rho \in (0,1)$ and $(\theta, u) \in \wt{N}_\rho (\wt{K}_c)$. By \eqref{eq:2.5} and \eqref{eq:2.13}, 
remark that 
	\[
		\rho > d_0 \left( (\theta, u) , \wt{K}_c \right) 
		= d_0 \left( (0, T_\theta u ), \wt{K}_c \right). 
	\]
Choose $\sigma \in C^1([0,1], \R \times X)$ so that 
	\[
		\sigma (0) = (0,T_\theta u), \quad 
		\sigma (1) \in \wt{K}_c, \quad 
		\int_0^1 \| \dot{\sigma} (\tau) \|_{\sigma (\tau)} \rd \tau \leq 2 \rho. 
	\]
Writing $\sigma (\tau) = ( \sigma_\theta (\tau), \sigma_u (\tau) )$, 
we observe from $\sigma_\theta(0) = 0$ that 
	\[
		| \sigma_\theta (\tau) | \leq \int_0^\tau | \dot{\sigma}_\theta (t) | \rd t 
		\leq \int_0^1 \| \dot{\sigma} (t) \|_{\sigma(t)} \rd t \leq 2 \rho 
		\quad {\rm for\ all} \ \tau \in [0,1].
	\]
Next, by \eqref{eq:2.1}, there exists a $c_1 > 0$ such that 
	\[
		c_1 \| v \|_X \leq \| T_\Theta v \|_X \quad 
		{\rm for\ all} \ (\Theta, v) \in [-2\rho, 2 \rho] \times X.
	\]
Then 
	\[
		\| T_\theta u - \sigma_u(1) \|_X 
		\leq \int_0^1 \| \dot{\sigma}_u (t) \|_X \rd t 
		\leq c_1^{-1} \int_0^1 \| \dot{\sigma} (t) \|_{\sigma (t)} \rd t 
		\leq 2 c_1^{-1} \rho.
	\]
Since $( \sigma_\theta (1) , \sigma_u(1) ) \in \wt{K}_c$ and 
$T_{\sigma_\theta(1)} \sigma_u(1) \in K_c$ due to \eqref{eq:2.13}, one has 
	\[
		 d_X ( T_\theta u, K_c ) \leq 
		\| T_\theta u - T_{\sigma_\theta (1)} \sigma_u(1) \|_X 
		\leq \| T_\theta u - \sigma_u(1) \|_X + \| \sigma_u(1) - T_{\sigma_\theta (1)} \sigma_u(1) \|_X.
	\]
Therefore, from $\pi(\theta, u) = T_\theta u$ and $T_{\sigma_\theta (1)} \sigma_u(1) \in K_c$,  
it follows that 
	\[
		\begin{aligned}
			d_X ( \pi(\theta, u), K_c ) 
			&\leq 2 \rho c_1^{-1} 
			+ \sup \left\{ \| T_{\Theta} w - w \|_X 
			\;\middle|\; 
			| \Theta | \leq 2 \rho, \ w \in K_c \right\} 
			\\
			&< 3 \rho c_1^{-1} 
			+ \sup \left\{ \| T_{\Theta} w -  w \|_X \;\middle|\;
			| \Theta | \leq 2 \rho, \ w \in K_c \right\} =: R(\rho).
		\end{aligned}
	\]
Note that by \PSPc, $K_c$ is compact. Thus, \eqref{T1} yields $R(\rho) \to 0$ as $\rho \to 0$. 
Since $(\theta, u) \in \wt{N}_\rho( \wt{K}_c )$ is arbitrary, we obtain 
$\pi ( \wt{N}_\rho ( \wt{K}_c ) ) \subset N_{R(\rho)} (K_c)$.

	On the other hand, if $u \in X \setminus N_{R(\rho)} (K_c)$ and 
$i(u) = (0,u) \in \wt{N}_\rho (\wt{K}_c)$, 
then we have a contradiction $u = \pi (i (u) ) \in N_{R(\rho)} (K_c)$. 
Therefore, $i (X \setminus N_{R(\rho) } (K_c ) ) \subset \left(  \R \times X \right) \setminus \wt{N}_\rho (\wt{K}_c)$ and 
this completes the proof. 
	\end{proof}

Now we prove Lemma \ref{lemma:2.5}.

	\begin{proof}[Proof of Lemma \ref{lemma:2.5}]
Let $\be >0$, $K_c \neq \emptyset$ and $\cO \supset K_c$ be any open set. 
Since $K_c$ is compact by \PSPc, choose $\rho>0$ sufficiently small so that $N_{R(\rho)} (K_c) \subset \cO$. 
Let $\e \in (0,\be)$ and $\wt{\eta}$ be constructed in Lemma \ref{lemma:2.9}. 
Then we define $\eta$ by 
	\[
		\eta (t,u) := \pi \left( \wt{\eta} ( t, i(u) ) \right) 
		= \pi \left( \wt{\eta} ( t, 0, u) \right) \in C( [0,1] \times X, X ).
	\]
By $ I (\eta (t,u)) = J( \wt{\eta} (t,0,u) )$, 
it is immediate to check (i)--(iv). For (v), 
since $[I \leq c + \e] \setminus \cO \subset [ I \leq c + \e] \setminus N_{R(\rho)} (K_c)$ and 
$i( [I \leq c + \e] ) \subset [ J \leq c + \e]$, we obtain 
$i ( [I \leq c + \e] \setminus \cO ) \subset [J \leq c + \e] \setminus \wt{N}_\rho (\wt{K}_c)$ 
thanks to Lemma \ref{lemma:2.11}. 
Thus, Lemma \ref{lemma:2.9} gives 
	\[
		\wt{\eta} \left( 1 , i \left( [I \leq c + \e] \setminus \cO \right) \right) 
		\subset \wt{\eta} \left( 1 , [ J \leq c + \e] \setminus \wt{N}_\rho ( \wt{K}_c ) \right) 
		\subset [ J \leq c - \e].
	\]
From $\pi ( [ J \leq c - \e] ) = [I \leq c - \e]$, it follows that 
$\eta ( 1 , [ I \leq c + \e ] \setminus \cO ) \subset [ I \leq c - \e]$.

	In a similar way, Lemma \ref{lemma:2.9} yields 
$\wt{\eta} (1, i( [I \leq c + \e] ) ) \subset \wt{\eta} (1, [J \leq c + \e]  ) 
\subset [ J \leq c - \e] \cup \wt{N}_\rho ( \wt{K}_\rho ) $. 
Thus, 
Lemma \ref{lemma:2.11} implies 
$\eta (1, [I \leq c + \e] ) \subset [I \leq c - \e] \cup \wt{N}_{R(\rho)} (\wt{K}_c)
\subset [ I \leq c - \e] \cup \cO$.

	When $K_c = \emptyset$, since $\wt{K}_c = \emptyset$, 
Lemma \ref{lemma:2.9} gives $\wt{\eta} (1, [J \leq c + \e] ) \subset [J \leq c - \e]$. 
Hence, $\eta (1, [I \leq c + \e]) \subset [I \leq c - \e]$ and 
we complete the proof. 
	\end{proof}

	\begin{remark}\label{remark:2.12}
By \eqref{eq:2.12} and Remark \ref{remark:2.10}, 
if $0< \inf \{ \| \rd J (0,u) \|_{(\R \times X)^\ast} \ |\ | J(0,u) - c | < \be \}$ for some $\be > 0$, 
then the assertions of Lemma \ref{lemma:2.5} hold. 
	\end{remark}


\subsection{Proof of Theorems \ref{theorem:2.3} and \ref{theorem:2.4}}
\label{section:2.2}


	We first treat Theorem \ref{theorem:2.3}:
	
	\begin{proof}[Proof of Theorem \ref{theorem:2.3}]
Notice that under the assumption of Theorem \ref{theorem:2.3}, 
$c_{mp}$ is well-defined and $c_{mp} > 0$ holds. 
In addition, the existence of $(u_n)$ in Theorem \ref{theorem:2.3} is 
equivalent to 
	\begin{equation}\label{eq:2.26}
		0= \inf \left\{ \left\| \rd J(0,u) \right\|_{(\R \times X)^\ast} \ \Big| \ 
		\left| J(0,u) - c_{mp} \right| < \frac{1}{n} \right\}
	\end{equation}
for each $n \geq 1$. Thus, by Remark \ref{remark:2.12} and the standard argument 
(see Rabinowitz \cite{R}, for instance), 
we have \eqref{eq:2.26} and the existence of $(u_n)$ follows. 
In addition, if $I$ satisfies $\mathrm{(PSP)}_{c_{mp}}$, then 
we may find a $u_0 \in X$ such that $\rd I(u_0) = 0$ with 
$\rd_{\theta} J (0,u_0) = 0$. 
Therefore, Theorem \ref{theorem:2.3} holds. 
	\end{proof}

	Next, we shall prove Theorem \ref{theorem:2.4}.

	\begin{proof}[Proof of Theorem \ref{theorem:2.4}]
We first define the minimax values: 
	\begin{equation}\label{eq:2.27}
		\begin{aligned}
			c_j &:= \inf_{\gamma \in \Gamma_j } \max_{\sigma \in D_j} 
			I (\gamma (\sigma)), \quad 
			D_j := \left\{\sigma \in \R^j \;\middle|\; |\sigma | \leq 1\right\},
			\\
			\Gamma_j &:= \Set{ \gamma \in C( D_j, X ) |
			\gamma(-\sigma) = - \gamma (\sigma) \ {\rm for}\ \sigma \in D_j, \ 
			\gamma = \gamma_j \ {\rm on} \ \partial D_j = S^{j-1} },
			\\
			d_j &:= \inf_{A\in \Lambda_j} \max_{ u \in A} I(u), \\
			\Lambda_j &:= \Set{ \gamma \left( \ov{D_m \setminus B } \right) 
			| 
			m \geq j, \ \gamma \in \Gamma_{m}, \ 
			B \subset D_m \setminus \{0\}: \text{closed}, 
			-B = B, \ g(B) \leq m-j   }
		\end{aligned}
	\end{equation}
where $g(B)$ denotes the Krasnoselskii genus of $B$. Remark that 
$d_j \leq c_j$ holds due to $g( \emptyset ) = 0$. 
Furthermore, $\Gamma_j \neq \emptyset$ since a map defined by
$\gamma_{0,j} (\sigma) := |\sigma| \gamma_j(\sigma / |\sigma|)$ for $\sigma \in D_j \setminus \{0\}$ and 
$\gamma_{0,j} (0) := 0$ belongs to $\Gamma_j$.

	Next, as in \cite[section 9]{R} and \cite[Lemma 2.4]{HT-18}, 
noting that $\gamma^{-1} ( \ov{B_{\rho_0}(0)} ) \cap S^{j-1} = \emptyset$ 
for any $\gamma \in \Gamma_j$ due to 
$\gamma_j (S^{j-1}) \cap \ov{B_{\rho_0}(0)} = \emptyset$ and $\gamma = \gamma_j$ on $S^{j-1}$, 
we may prove that 
	\begin{itemize}
		\item $\Lambda_{j+1} \subset \Lambda_j$ and $d_j \leq d_{j+1}$. 
		\item If $\psi \in C(X,X)$ is odd and satisfies $\psi (u) = u$ provided $u \in [I \leq 0]$, then 
		$\psi \circ \gamma \in \Gamma_j$ for any $\gamma \in \Gamma_j$, hence, 
		$\psi (A) \in \Lambda_j$ for each $A \in \Lambda_j$. 
		\item For every $A \in \Lambda_j$ and closed set $Z \subset X \setminus \{0\}$ with $-Z=Z$ 
		and $ g(Z) \leq s < j$, we have $\ov{A \setminus Z } \in \Lambda_{j-s}$. 
		\item $A \cap \partial B_{\rho_0} (0) \neq \emptyset$ for any $A \in \Lambda_j$. 
		Thus, Theorem \ref{theorem:2.4} (i) yields 
		$0 < \inf_{\| u \|_{X} = \rho_0} I(u) \leq d_j \leq c_j$. 
	\end{itemize}

	Next, we claim that $K_{c_j} \neq \emptyset$ and $K_{d_j} \neq \emptyset$. 
Indeed, if $K_{d_j} = \emptyset$, then we apply Lemma \ref{lemma:2.5} with $c = d_j$. 
Let $\be \in (0, d_j/2)$, $\eta$ and $\e \in (0,\be)$ appear in Lemma \ref{lemma:2.5}. 
Choose $A \in \Lambda_j$ so that $\max_{ u \in A} I(u) \leq d_j + \e$. 
By the properties of $\eta$, we see $\eta (1, A ) \in \Lambda_j$ and 
$\max_{u \in \eta(1,A)} I(u) \leq d_j - \e$, which is a contradiction. 
Hence, $K_{d_j} \neq \emptyset$. In a similar way, we can prove $K_{c_j} \neq \emptyset$.

	Since $\rd J(0,u) = 0$ and $I(u) = c_j \geq d_j$ for $u \in K_{c_j}$, 
it suffices to show that $d_j \to \infty$ as $j \to \infty$. 
Following \cite[section 9]{R} and replacing $\mathcal{K}$ in \cite[proof of Proposition 9.33]{R} 
by $\mathcal{K}:= \set{ u \in X | d_{1} \leq I(u) \leq \ov{d}, \ \rd J(0,u) = 0 }$ where 
$\ov{d} := \lim_{j \to \infty} d_j$, we may also show that 
	\begin{itemize}
		\item 
			If $d_{j}= d_{j+1} = \cdots = d_{j+\ell} =: d$, then $g( K_d ) \geq \ell + 1$.
		\item 
			$d_j \to \infty$ as $j \to \infty$. 
	\end{itemize}
This completes the proof of Theorem \ref{theorem:2.4}. 
\end{proof}


\section{Proof of Theorems \ref{theorem:1.1} and \ref{theorem:1.3}}\label{section:3}


	In this section, using Theorems \ref{theorem:2.3} and \ref{theorem:2.4}, 
we prove Theorems \ref{theorem:1.1} and \ref{theorem:1.3}. 
We first deal with Theorem \ref{theorem:1.1}. 
In order to apply Theorem \ref{theorem:2.4}, 
we modify the nonlinearity $f(t)$. As the first step, we prove

	\begin{lemma}\label{lemma:3.1}
		Suppose that $f$ satisfies \textup{(f0)}, \textup{(f1)} and \textup{(f3)}. 
		Assume also that there exists a $\zeta_2 > \zeta_1$ such that $f(t) = 0$ for each $t \geq \zeta_2$. 
		Then $\| u \|_{L^\infty(\RN)} \leq \zeta_2$ holds for every solution $u$ of \eqref{eq:1.1}.
	\end{lemma}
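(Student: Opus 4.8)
The plan is to prove a pointwise $L^\infty$-bound on solutions by a cutoff-and-testing argument of Stampacchia type, adapted to the nonlocal operator $(-\Delta)^s$. Since $f(t) = 0$ for $t \geq \zeta_2$, the idea is that on the set where $u > \zeta_2$ the equation forces $u$ to be "subharmonic with no source," and the fractional operator cannot sustain this against the nonlocal averaging. Concretely, I would test the weak formulation
\[
\la u, \varphi \ra_{\cDs{\RN}} = \int_{\RN} f(u) \varphi \, \rd x \qquad \text{for } \varphi \in C^\infty_0(\RN)
\]
against (an approximation in $C^\infty_0$ of) $\varphi = (u - \zeta_2)_+$. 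By density of $C^\infty_0(\RN)$ in $\cDs{\RN}$ and the fact that $w := (u-\zeta_2)_+ \in \cDs{\RN}$ (composition with the $1$-Lipschitz map $t \mapsto (t-\zeta_2)_+$ does not increase the Gagliardo seminorm), this testing is legitimate after a standard approximation argument. The right-hand side vanishes: on $\{u > \zeta_2\}$ we have $f(u) = 0$ by hypothesis, while on $\{u \leq \zeta_2\}$ we have $(u-\zeta_2)_+ = 0$. Hence $\la u, w \ra_{\cDs{\RN}} = 0$.

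The key algebraic step is the pointwise inequality, valid for the Gagliardo kernel,
\[
\bigl( u(x) - u(y) \bigr)\bigl( w(x) - w(y) \bigr) \geq \bigl( w(x) - w(y) \bigr)^2,
\]
where $w = (u - \zeta_2)_+$; this follows by checking the cases according to whether each of $u(x), u(y)$ exceeds $\zeta_2$ (when both do, one gets equality; when neither does, the right side is $0$ and the left side is the square of the difference of the positive-part truncations of two quantities below $\zeta_2$, which is $0$; the mixed cases give a strict inequality in the correct direction). Integrating against $C_{N,s}|x-y|^{-N-2s}/2$ over $\RN \times \RN$ gives
\[
0 = \la u, w \ra_{\cDs{\RN}} \geq \| w \|_{\cDs{\RN}}^2 \geq 0,
\]
forcing $\| w \|_{\cDs{\RN}} = 0$, hence $w \equiv 0$ by the Sobolev inequality (so that $w \in L^{2^\ast_s}$ with zero seminorm is the zero function). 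Therefore $u \leq \zeta_2$ a.e. Applying the same argument to $-u$, which solves $(-\Delta)^s(-u) = -f(u) = f(-u)$ by the oddness (f0), and using that $f(t) = -f(-t) = 0$ for $t \leq -\zeta_2$, yields $-u \leq \zeta_2$, i.e. $u \geq -\zeta_2$. Combining, $\|u\|_{L^\infty(\RN)} \leq \zeta_2$.

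The main obstacle I anticipate is the rigorous justification that $w = (u-\zeta_2)_+$ is an admissible test function: the weak formulation is stated only for $\varphi \in C^\infty_0(\RN)$, and $w$ is neither smooth nor compactly supported. One must first extend the identity to all $\varphi \in \cDs{\RN}$ by density — using that the bilinear form $\la \cdot, \cdot \ra_{\cDs{\RN}}$ is continuous on $\cDs{\RN}$ and that $\varphi \mapsto \int f(u)\varphi$ extends continuously, which requires a mild a priori integrability input on $f(u)$ (available from (f1) near $0$ together with $f$ being bounded, since $f(t)=0$ for large $t$ and $f$ is continuous, so $|f(u)| \le C|u|^{2^\ast_s-1}$ globally, putting $f(u) \in L^{(2^\ast_s)'}$). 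Once the identity holds for all test functions in $\cDs{\RN}$, the truncation $w$ is admissible because it lies in $\cDs{\RN}$ as noted. I would also remark that (f1) — more precisely the boundedness of $f$ following from continuity plus $f \equiv 0$ on $[\zeta_2,\infty)$ and oddness — is exactly what is needed to close this integrability gap; no further regularity of $f$ is required.
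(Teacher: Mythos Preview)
Your proposal is correct and follows essentially the same approach as the paper: set $w=(u-\zeta_2)_+$, test the weak formulation against it, use the pointwise inequality $(u(x)-u(y))(w(x)-w(y))\ge (w(x)-w(y))^2$, and conclude $\|w\|_{\cDs{\RN}}=0$; then repeat for $-u$ via the oddness in (f0).

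One small caveat on your justification of the test function. You claim $|f(u)|\le C|u|^{2^\ast_s-1}$ globally, invoking (f1) near $0$; but (f1) only bounds the $\limsup$ of $f(t)/(|t|^{2^\ast_s-2}t)$ from above and explicitly allows the limit to be $-\infty$, so $|f(t)|$ need not be controlled by $|t|^{2^\ast_s-1}$ for small $t$, and hence $f(u)\in L^{(2^\ast_s)'}$ is not guaranteed. The paper avoids this by inserting a spatial cutoff: with $\varphi_R\in C^\infty_0(\RN)$, the function $\varphi_R w$ has compact support, can be approximated in $\cDs{\RN}$ by $C^\infty_0$ functions supported in a fixed ball, and since $f$ is continuous, odd, and vanishes on $[\zeta_2,\infty)$ one has $f(u)\in L^\infty(\RN)$; this is enough for $\int f(u)\varphi_n\to\int f(u)\varphi_R w=0$. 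Then one lets $R\to\infty$ on the left-hand side. Your argument is easily repaired the same way.
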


	\begin{proof}
Consider $v_0(x) := \max\{ u(x) , \zeta_2 \} - \zeta_2 = (u(x) - \zeta_2 )_+ $ 
where $a_+ := \max\{ 0, a \}$ and 
let $\varphi_1 \in C^\infty_0(\RN)$ be a cut off function with 
$0 \leq \varphi_1 (x) \leq 1$, $\varphi_1(x)=0$ if $|x| \geq 2$ and 
$\varphi_1(x) =1$ if $|x| \leq 1$. 
Set also $\varphi_R(x) := \varphi_1(x/R)$ for $R \geq 1$. 
Then we can check that 
$v_0 \in \cDs{\RN}$ and $\| \varphi_R v_0 - v_0 \|_{\cDs{\RN}} \to 0$ as $R \to \infty$. 
Since $u$ is a solution of \eqref{eq:1.1} and $\varphi_R v_0$ can be approximated by 
functions in $C^\infty_0(\RN)$ and $f(u) \in L^\infty(\R)$ due to the assumption, we have 
	\begin{equation}\label{eq:3.1}
		\la u , \varphi_R v_0 \ra_{\cDs{\RN} } 
		= \int_{\RN} f( u(x) ) \varphi_R v_0(x) \rd x \quad {\rm for\ all} \ R \geq 1. 
	\end{equation}
By the definition of $v_0$ and the assumption on $f$, 
we observe that $f(u(x)) \varphi_R (x) v_0(x) \equiv 0$ on $\RN$. 
Letting $R \to \infty$ in \eqref{eq:3.1}, we obtain 
	\[
		0= \la u, v_0 \ra_{\cDs{\RN} }
		= \int_{\RN \times \RN} \frac{( u(x) - u(y) ) ( v_0(x) - v_0 (y) ) }{|x-y|^{N+2s}} \rd x \rd y 
		\geq \| v_0 \|^2_{\cDs{\RN}},
	\]
which implies $v_0 \equiv 0$ and $ u(x) \leq \zeta_2$.

	In a similar way, we may prove $ - \zeta_2 \leq u(x)$ and Lemma \ref{lemma:3.1} holds. 
	\end{proof}

	By Lemma \ref{lemma:3.1}, without loss of generality, 
we may assume the following condition instead of (f2):
	\begin{equation}
		\lim_{t \to \infty} \frac{f(t)}{t^{2^\ast_s-1}} = 0.
		\tag{f2'}
	\end{equation}
In fact, if 
	\begin{equation}\label{eq:3.2}
		f(t) > 0 \quad {\rm for\ all} \ t > \zeta_1,
	\end{equation}
then (f2) and \eqref{eq:3.2} imply (f2'). 
On the other hand, 
if there exists a $\zeta_2 > \zeta_1$ such that 
$f(\zeta_2) = 0$, then set 
	\[
		\ov{f} (t) := \left\{\begin{aligned}
			& f(t) & &{\rm if} \ 0 \leq t \leq \zeta_2, \\
			& 0 & &{\rm if} \ \zeta_2 < t
		\end{aligned}\right.
	\]
and extend $\ov{f}$ as an odd function on $\R$. 
Remark that $\ov{f}$ satisfies (f2') and 
instead of \eqref{eq:1.1}, we consider 
	\begin{equation}\label{eq:3.3}
		(-\Delta)^s u = \ov{f} (u) \quad {\rm in} \ \RN, \quad u \in \cDs{\RN}.
	\end{equation}
By Lemma \ref{lemma:3.1}, any solution $w$ of \eqref{eq:3.3} also satisfies \eqref{eq:1.1} and 
$I_0(w) = \ov{I}_0 (w)$ where 
	\[
		\ov{I}_0 (u) := \frac{1}{2} \|u\|_{\cDs{\RN}}^2 - \int_{\RN} \ov{F} (u) \rd x, 
		\quad 
		\ov{F} (t) := \int_0^t \ov{f} (\tau) \rd \tau
		.
	\]
Therefore, instead of $f$, we may use $\ov{f}$ to obtain the desired solutions of \eqref{eq:1.1}.

	In what follows, we assume that (f0), (f1), (f2') and (f3) hold. 
By (f3), we have the following two cases: 
	\begin{enumerate}
		\item[(I)] There exists a $\xi_0 \in (0,\zeta_1)$ such that $f(\xi_0) = 0$. 
		\item[(II)] $f(t) > 0$ for each $ t \in (0, \zeta_1)$. 
	\end{enumerate}

	In case (I), writing $f(t) = f_+(t) - f_-(t)$ where 
$f_{\pm} (t) := \max\{ \pm f(t) , 0 \}$ if $ t \geq 0$ and 
$f_\pm (t) := - f_\pm (-t)$ if $t < 0$, 
for each $\e \in (0,1]$ and $t \geq 0$, set 
	\[
		f_{\e, -} (t) := \left\{\begin{aligned}
			& \min \left\{  f_-(t) , \e^{-1} t^{2^\ast_s - 1}  \right\} 
			& &{\rm if} \ 0 \leq t \leq \xi_0,
			\\
			& f_- (t) & &{\rm if} \ \xi_0 < t
		\end{aligned}\right., \quad 
		f_{\e} (t) := f_+(t) - f_{\e,-}(t)
	\]
and extend $f_{\e,-}, f_\e$ as odd functions on $\R$. It is easily seen that $f_{\e,-}, f_\e \in C(\R)$ and 
	\begin{align}
		& f_{\e_2,-} (t) \leq f_{\e_1,-}(t) \leq f_-(t) \quad \text{for each $0< \e_1 < \e_2 \leq 1$ and $ t \geq 0$},
		\label{align:3.4}
		\\
		&\sup_{t \in \R} \left\{ \left| f_{\e,-}(t) - f_-(t) \right| 
		+ \left| f_\e(t) - f(t) \right| \right\}  \to 0 \quad {\rm as} \ \e \to 0.
		\label{align:3.5}
	\end{align}

		On the other hand, in case (II), we do not need any further modification of $f$ and 
for notational convenience, write $f_{\e,-}(t) := f_-(t)$ and $f_\e(t) := f_+(t) - f_{\e,-} (t) = f(t)$ for each $\e \in (0,1]$. 
Then \eqref{align:3.4} and \eqref{align:3.5} still hold in this case.

	In either case, we remark that $f_\e$ satisfies $f(t) \leq f_\e(t)$ for $t \geq 0$ and 
	\begin{equation}\label{eq:3.6}
		\text{(f0), (f2'), (f3), }
		\lim_{t \to +0} \frac{f_{+} (t) }{t^{2^\ast_s - 1}} = 0, \quad 
		0 \leq \liminf_{t \to +0} \frac{f_{\e,-}(t)}{t^{2^\ast_s - 1 }} 
		\leq \limsup_{t \to +0} \frac{ f_{\e,-} (t) }{t^{2^\ast_s - 1}} \leq \e^{-1} < \infty. 
	\end{equation}
Therefore, the functional defined by 
	\[
		I_\e (u) := \frac{1}{2} \| u \|_{\cDs{\RN}}^2 - \int_{\RN} F_+(u) - F_{\e,-}(u) \rd x 
		= \frac{1}{2} \| u \|_{\cDs{\RN}}^2 - \int_{\RN} F_+ (u) \rd x + \int_{\RN} F_{\e,-} (u) \rd x 
	\]
enjoys $I_\e \in C^1( \cDs{\RN}, \R )$ and 
	\begin{equation}\label{eq:3.7}
		I_{\e_2} (u) \leq I_{\e_1} (u) \leq I_0(u) \quad {\rm for\ every} \ 0<\e_1<\e_2 \leq 1, \ 
		u \in \cDs{\RN}.
	\end{equation}

	We shall apply Theorem \ref{theorem:2.4} for 
	\[
		X = \cDsr =: \Set{ u \in \cDs{\RN} | u(x) = u(|x|) \ {\rm for \ a.a.}\ x \in \RN },
	\]
$I=I_\e$, $T_\theta u (x) := u(e^{-\theta} x)$ and 
	\[
		\begin{aligned}
			J_\e(\theta,u) := I_\e (T_\theta u) 
			&= \frac{e^{(N-2s)\theta}}{2} \|u\|_{\cDs{\RN}}^2 - e^{N \theta} \int_{\RN} F_+(u) \rd x 
			+ e^{N\theta} \int_{\RN} F_{\e,-} (u) \rd x 
			\\
			&\in C^1(\R \times \cDsr , \R).
		\end{aligned}
	\]
As pointed out in section \ref{section:2}, it is immediate to check that 
$X,I_\e,T_\theta, J_\e$ satisfy \eqref{T1} and \eqref{T2}. 
What remains to check is \PSPc \  and the symmetric mountain pass structure 
for $(X,I_\e,J_\e)$.

	\begin{lemma}\label{lemma:3.2}
		For each $\e \in (0,1]$, 
		$(X,I_\e)$ satisfies 
		the conditions \emph{(i)} and \emph{(ii)} in Theorem \ref{theorem:2.4}. 
	\end{lemma}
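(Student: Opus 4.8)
The plan is to verify the two conditions of the symmetric mountain pass geometry separately, exploiting the structure of $I_\e$ established just above. For condition (i), I would start from the decomposition $I_\e(u) = \frac{1}{2}\|u\|_{\cDs{\RN}}^2 - \int_{\RN} F_+(u)\,\rd x + \int_{\RN} F_{\e,-}(u)\,\rd x$ and discard the nonnegative term $\int F_{\e,-}(u)$, so that $I_\e(u) \geq \frac{1}{2}\|u\|^2 - \int F_+(u)$. From \eqref{eq:3.6}, namely $\lim_{t\to +0} f_+(t)/t^{2^\ast_s-1}=0$ together with (f2') (which forces $f_+(t)/t^{2^\ast_s-1}\to 0$ as $t\to\infty$ as well, since $f_+\le |f|$), one gets for every $\delta>0$ a constant $C_\delta$ with $F_+(t)\le \delta |t|^{2^\ast_s} + C_\delta |t|^{p+1}$ is \emph{not} quite what is needed in the zero-mass space; instead the clean estimate is $F_+(t)\le \delta|t|^{2^\ast_s}$ for $|t|$ small and $F_+(t)\le \delta|t|^{2^\ast_s}$ for $|t|$ large, with a bounded contribution on a compact $t$-interval. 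I would therefore split: $F_+(t)\le \delta|t|^{2^\ast_s} + C_\delta \chi_{\{|t|\le M\}}|t|^{2^\ast_s}$ and then use that on $\cDsr$ the radial Strauss-type embedding controls the intermediate term; more efficiently, since $F_+(t) \leq \e_1 |t|^{2_s^\ast}$ for all $t$ outside a fixed interval and $F_+(t)\le C|t|^{2_s^\ast}$ always, one bounds $\int_{\RN} F_+(u)\,\rd x \le \e_1 \|u\|_{L^{2_s^\ast}}^{2_s^\ast} + C|\{|u|\ge m\}|$, and on $\cDsr$ the measure of the super-level set is controlled by $\|u\|_{L^{2_s^\ast}}^{2_s^\ast}/m^{2_s^\ast}$. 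Choosing $\e_1$ small and using Sobolev's inequality $\|u\|_{L^{2_s^\ast}}\le S\|u\|_{\cDs{\RN}}$ yields $I_\e(u)\ge \frac{1}{4}\|u\|_{\cDs{\RN}}^2 - C\|u\|_{\cDs{\RN}}^{2_s^\ast}$, and since $2_s^\ast>2$ there is $\rho_0>0$ with $\inf_{\|u\|_X=\rho_0} I_\e(u)>0=I_\e(0)$.

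For condition (ii), I would build the odd maps $\gamma_j \in C(S^{j-1},X)$ by a finite-dimensional rescaling argument, exactly as in Berestycki--Lions and Struwe. By (f3) there is $\zeta_1>0$ with $F(\zeta_1)>0$, hence by \eqref{align:3.5} (the uniform convergence $f_\e\to f$) one has $F_+(\zeta_1) - F_{\e,-}(\zeta_1)=F_\e(\zeta_1)>0$ for $\e$ small; but in fact $F_+ - F_{\e,-}\ge F_+ - F_- = F$ since $f_{\e,-}\le f_-$, so $F_+(\zeta_1)-F_{\e,-}(\zeta_1)\ge F(\zeta_1)>0$ for \emph{every} $\e\in(0,1]$. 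Pick a radial bump $w_1\in C_0^\infty(\RN)$, $w_1\not\equiv 0$, and a large annular region $A_R$ on which $|w_1|\equiv \zeta_1$; scaling $u_R(x)=w_1(x/R)$ makes the nonlinear integral $\int(F_+-F_{\e,-})(u_R)\ge c_0 R^N$ for some $c_0>0$ while the Dirichlet seminorm scales as $\|u_R\|_{\cDs{\RN}}^2 = R^{N-2s}\|w_1\|_{\cDs{\RN}}^2$; since $N-2s<N$, $I_\e(u_R)\to -\infty$ as $R\to\infty$. To pass from one function going to $-\infty$ to a $j$-dimensional odd sphere, use $j$ disjointly-supported rescaled copies placed at well-separated centers (the radial constraint is handled by taking, instead, a family parametrized over $S^{j-1}$ via the standard construction of Berestycki--Lions, e.g. $\sigma\mapsto \sum_{i}\sigma_i w_1((x-R e_i)/r)$ suitably radialized, or more simply by invoking that $\cDsr$ is infinite-dimensional and applying the genus-theoretic existence of such maps as in \cite[section 9]{R}): one obtains an odd continuous $\gamma_j$ on $S^{j-1}$ with $\max_{\sigma\in S^{j-1}} I_\e(\gamma_j(\sigma))<0$, and after rescaling outward if necessary, $\gamma_j(S^{j-1})\subset X\setminus\ov{B_{\rho_0}(0)}$.

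The main obstacle is the verification of condition (i) on the \emph{homogeneous} space $\cDsr$: unlike the $H^s$ setting, there is no $L^2$-term to absorb a lower-order part of $F_+$, so the estimate $F_+(t)\le\delta|t|^{2_s^\ast}$ must be made to hold for \emph{all} $t$ up to a genuinely small defect, which is exactly why (f1) was strengthened to the two-sided limit $f_+(t)/t^{2_s^\ast-1}\to 0$ at both $0$ and $\infty$ via \eqref{eq:3.6} and (f2'); the compactly-supported "error" in $F_+$ on a bounded $t$-interval must then be controlled through the measure of super-level sets and the single Sobolev inequality, with no room for a subcritical interpolation. Condition (ii) is comparatively routine given the Berestycki--Lions scaling and the fact that $F_\e \ge F$, so $F(\zeta_1)>0$ transfers verbatim; I expect the write-up there to be short, citing \cite{BL-83-3} or \cite{Stru-82} for the construction of the odd maps on $S^{j-1}$.
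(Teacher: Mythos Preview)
Your argument for condition (i) is correct but needlessly elaborate. You worry that on the homogeneous space one must force the constant in front of $|t|^{2^\ast_s}$ to be small, and you introduce super-level sets and a splitting into small/intermediate/large $|t|$ to achieve this. None of that is needed: from \eqref{eq:3.6} and (f2') the ratio $f_+(t)/|t|^{2^\ast_s-1}$ tends to $0$ at both $t\to 0$ and $|t|\to\infty$, hence is \emph{bounded} on all of $\R$, so $F_+(t)\le C|t|^{2^\ast_s}$ with a single fixed constant $C$. Sobolev then gives $I_\e(u)\ge \tfrac{1}{2}\|u\|_{\cDs{\RN}}^2 - C\|u\|_{\cDs{\RN}}^{2^\ast_s}$, and since $2^\ast_s>2$ the constant $C$ need not be small for the mountain pass geometry to hold near $0$. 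This is exactly the paper's route, and your detour through measure estimates, while not wrong, obscures the point.

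For condition (ii) there is a genuine gap. Your proposed construction via translates $\sigma\mapsto\sum_i\sigma_i w_1((x-Re_i)/r)$ does not produce radial functions, so it does not land in $X=\cDsr$; the phrase ``suitably radialized'' hides the whole difficulty. The fallback to ``genus-theoretic existence of such maps as in \cite[section 9]{R}'' does not help either: that reference gives abstract genus properties, not odd maps $S^{j-1}\to\cDsr$ with $\int_{\RN}F(\gamma_j(\sigma))\,\rd x\ge 1$. The paper invokes the explicit Berestycki--Lions construction \cite[sections 9.2a and 9.2b]{BL-83-2}, which builds, for each $k$, odd continuous maps $\wt\gamma_k:S^{k-1}\to H^1_{\rm rad}(\RN)$ with compact support and $\int_{\RN}F(\wt\gamma_k(\sigma))\,\rd x\ge 1$; this uses piecewise-affine radial profiles and is specific to the radial setting. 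One then scales by $e^{-\theta_k}$ and uses $I_\e\le I_0$ from \eqref{eq:3.7} (your observation $F_\e\ge F$ is equivalent) to conclude. So the scaling idea you sketch for a single function is right, but the passage to $j$-dimensional odd spheres inside $\cDsr$ requires the specific Berestycki--Lions radial construction, which you should cite rather than improvise.
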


	\begin{proof}
We first show (i). Remark that 
	\[
		I_\e(u) \geq \frac{1}{2}\|u\|_{\cDs{\RN}}^2 - \int_{\RN} F_+(u) \rd x. 
	\]
From \eqref{eq:3.6} we can find a $C>0$ so that 
	\[
		|F_+(t) |  \leq C |t|^{2^\ast_s} \quad \text{for all $t \in \R$}.
	\]
Therefore, Sobolev's inequality yields 
	\[
		I_\e(u) \geq \frac{1}{2} \|u\|_{\cDs{\RN}}^2 - C \| u \|_{L^{2^\ast_s}(\RN)}^{2^\ast_s} 
		\geq \frac{1}{2} \|u\|_{\cDs{\RN}}^2 - C \|u\|_{\cDs{\RN}}^{2^\ast_s}.
	\]
Since $2^\ast_s>2$, there exists a $\rho_0>0$ such that 
	\begin{equation}\label{eq:3.8}
		\inf_{ \| u \|_{\cDs{\RN}} = \rho_0}  I_\e (u) \geq  \inf_{ \| u \|_{\cDs{\RN}} = \rho_0} 
		\rho_0^2 \left( \frac{1}{2} - C \rho_0^{2^\ast_s - 2} \right) > 0 , \quad 
		\| u \|_{\cDs{\RN}} \leq \rho_0 \quad \Rightarrow \quad 
		I_\e (u) \geq 0.
	\end{equation}

	Next we treat (ii). 
We first remark that following Berestycki and Lions \cite[sections 9.2a and 9.2b]{BL-83-2},
under (f0) and (f3), for each $k \geq 1$ 
we may find $R_k > 0$ and $\wt{\gamma}_k \in C( S^{k-1} , H^1_{\rm rad} (\RN) )$ such that 
	\[
		\wt{\gamma}_k(-\sigma) = - \wt{\gamma}_k( \sigma ), \quad 
		{\rm supp}\, \wt{\gamma}_k(\sigma) \subset B_{R_k}(0), \quad 
		1 \leq \int_{\RN} F( \wt{\gamma}_k(\sigma) ) \rd \sigma < \infty 
		\quad \text{for any $\sigma \in S^{k-1}$}.
	\]
By $H^1_{\rm rad}(\RN) \subset \cDsr$ and \eqref{eq:3.7}, one sees that 
	\[
		\begin{aligned}
			I_\e \left( \wt{\gamma}_k(\sigma) (e^{-\theta} \cdot ) \right) 
			\leq I_0 \left( \wt{\gamma}_k(\sigma) (e^{-\theta} \cdot ) \right) 
			&= \frac{e^{(N-2s)\theta}}{2} \left\|\wt{\gamma}_k(\sigma) \right\|^2_{\cDs{\RN}} 
			- e^{N\theta} \int_{\RN} F \left( \wt{\gamma}_k (\sigma) \right) \rd x
			\\
			& \leq \frac{e^{(N-2s)\theta}}{2} 
			\max_{\sigma \in S^{k-1}} \left\|\wt{\gamma}_k(\sigma) \right\|^2_{\cDs{\RN}}  
			- e^{N\theta} .
		\end{aligned}
	\]
Hence, choosing sufficiently large $\theta_k>0$ and setting 
$\gamma_k(\sigma) (x) := \wt{\gamma}_k (\sigma) ( e^{-\theta_k} x )$, 
we get
	\[
		\max_{\sigma \in S^{k-1}} I_\e (\gamma_k (\sigma) ) 
		\leq \max_{\sigma \in S^{k-1}} I_0 ( \gamma_k (\sigma) )\leq - 1.
	\]
Furthermore, from \eqref{eq:3.8} it follows that 
	\[
		\gamma_k(S^{k-1}) \subset \cDsr \setminus \ov{B_{\rho_0} (0) }.
	\]
Thus we complete the proof. 
	\end{proof}

	\begin{remark}\label{remark:3.3}
As a byproduct of the proofs of Theorem \ref{theorem:2.4} and Lemma \ref{lemma:3.2}, 
since ${\rm supp} \, \gamma_k(\sigma) \subset B_{e^{\theta_k} R_k } (0)$ for each $\sigma \in S^{k-1}$,  
we obtain an upper bound for each $c_{\e,k}$:
	\[
		c_{\e,k} \leq \ov{c}_k := \max_{\sigma \in D^k} 
		I \left( \gamma_{0,k} (\sigma) \right) < \infty
	\]
where $c_{\e,k}$ is defined though \eqref{eq:2.27} with $I_\e$ and 
$\gamma_{0,k}(\sigma) := |\sigma| \gamma_k(\sigma/ |\sigma)$ if $|\sigma|>0$ and 
$\gamma_{0,k} (0) := 0$. 
We also remark that $c_{\e,k}$ is monotone due to \eqref{eq:3.7}: 
$c_{\e_2, k} \leq c_{\e_1,k} \leq \ov{c}_k$ for each $0<\e_1 < \e_2 \leq 1$. 
	\end{remark}

	\begin{lemma}\label{lemma:3.4}
		For each $\e \in (0,1]$ and $ c \in \R$, $I_\e$ satisfies \PSPc. 
	\end{lemma}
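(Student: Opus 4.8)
The plan is to verify the Palais--Smale--Pohozaev condition \PSPc\ for the modified functional $I_\e$ on $X = \cDsr$ by following the now-standard three-step scheme: (a) establish that a \PSPc-sequence is bounded in $\cDs{\RN}$ using the Pohozaev-type information contained in $\rd_\theta J_\e(0,u_n)$; (b) pass to a weakly convergent subsequence and identify the limit as a solution of the modified equation; (c) upgrade weak convergence to strong convergence via a Brezis--Lieb / concentration-compactness argument, using the compact embedding $\cDsr \hookrightarrow L^p(\RN)$ for subcritical $p$ and the behavior of $f_\e$ near $0$ and $\infty$.

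First I would spell out what $\|\rd J_\e(0,u_n)\|_{(\R\times X)^\ast}\to0$ gives: writing $\mathcal{A}_n := \|u_n\|_{\cDs{\RN}}^2$ and $\mathcal{B}_n := \int_{\RN}\bigl(F_+(u_n)-F_{\e,-}(u_n)\bigr)\,\rd x$, the condition $\rd_u J_\e(0,u_n)\to0$ yields $\mathcal{A}_n - \int_{\RN} f_\e(u_n)u_n\,\rd x = o(\|u_n\|_{\cDs{\RN}})$, while $\rd_\theta J_\e(0,u_n)\to0$ yields the approximate Pohozaev identity $\tfrac{N-2s}{2}\mathcal{A}_n - N\mathcal{B}_n = o(1)$. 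Together with $I_\e(u_n) = \tfrac12\mathcal{A}_n - \mathcal{B}_n \to c$, the Pohozaev relation immediately forces $\tfrac{s}{N}\mathcal{A}_n = \tfrac12\mathcal{A}_n - \mathcal{B}_n + o(1) \to c$, so $\|u_n\|_{\cDs{\RN}}$ is bounded (and if $c\le 0$ one even gets $u_n\to0$, handling that case trivially). This is the key structural point and the reason the Pohozaev-augmented condition is natural here: boundedness is automatic, without any Ambrosetti--Rabinowitz hypothesis on $f$.

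Next I would extract $u_n \rightharpoonup u$ in $\cDsr$, hence (by radial compactness of Strauss type, valid for $0<s<1$, $N\ge2$) $u_n \to u$ strongly in $L^p(\RN)$ for every $p\in(2,2^\ast_s)$ and a.e. Using the growth of $f_\e$ — namely $f_\e(t) = f_+(t) - f_{\e,-}(t)$ with $f_+(t) = o(|t|^{2^\ast_s-1})$ as $t\to0$ and as $t\to\infty$ by \eqref{eq:3.6}, and $f_{\e,-}(t)\le \e^{-1}|t|^{2^\ast_s-1}$ near $0$ while $f_{\e,-}(t)\le C|t|^{2^\ast_s-1}$ globally — one shows $\rd I_\e(u)=0$ by passing to the limit in the weak formulation against test functions. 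The strong convergence $\|u_n-u\|_{\cDs{\RN}}\to0$ is then obtained by testing $\rd I_\e(u_n)-\rd I_\e(u)$ against $u_n-u$: the $\cDs{\RN}$-inner product term gives $\|u_n-u\|_{\cDs{\RN}}^2 + o(1)$, and one must show $\int_{\RN}\bigl(f_\e(u_n)-f_\e(u)\bigr)(u_n-u)\,\rd x \to 0$. For the "subcritical part" of $f_\e$ this follows from $L^p$ convergence; for the part that genuinely grows like $|t|^{2^\ast_s-1}$ (the $f_+$ tail and the truncated $f_{\e,-}$), one splits the integral over regions $\{|u_n|\le M\}$ and $\{|u_n|>M\}$ and uses (f2')-type smallness — $f_+(t)/|t|^{2^\ast_s-1}\to 0$ at $\infty$ — together with the boundedness of $u_n$ in $L^{2^\ast_s}$ to make the tail uniformly small, then lets $M\to\infty$.

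The main obstacle I anticipate is exactly this last step: the critical Sobolev exponent $2^\ast_s$ appears at both ends of the growth of $f_\e$, so one cannot simply invoke compact Sobolev embeddings to kill the nonlinear term, and there is potential loss of compactness at infinity and by concentration. The resolution relies crucially on the two-sided vanishing in \eqref{eq:3.6} (no exact critical growth survives, only a $o(|t|^{2^\ast_s-1})$ bound near $0$ and $\infty$ for $f_+$), which lets a Brezis--Lieb decomposition plus a vanishing-of-the-defect argument close the gap; the truncation $f_{\e,-}$ contributes with a favorable sign in $I_\e$ and only near the origin, so it does not spoil the estimate. I would carry this out essentially as in Ikoma \cite{I-17} and Ambrosio \cite{Amb-18-1}, adapted to the space $\cDsr$ rather than $\Hsr$.
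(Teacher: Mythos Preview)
Your boundedness argument in step~(a) is exactly what the paper does, and the identification of the weak limit as a critical point is fine. The real issue is in step~(c), where you invoke ``radial compactness of Strauss type'' to claim $u_n\to u$ strongly in $L^p(\RN)$ for every $p\in(2,2^\ast_s)$. This is false in $\cDsr$: the Strauss--Lions compactness theorem applies to $H^s_{\rm rad}(\RN)$, not to $\cDsr$. Functions in $\cDsr$ are only guaranteed to lie in $L^{2^\ast_s}(\RN)$, and in general do \emph{not} belong to any $L^p(\RN)$ with $p<2^\ast_s$, so the statement ``$u_n\to u$ in $L^p(\RN)$'' is not even meaningful. Consequently the mechanism you propose for killing the nonlinear defect---subcritical $L^p$ convergence for the bulk plus an $o(|t|^{2^\ast_s-1})$ tail estimate---does not get off the ground, and your later splitting into $\{|u_n|\le M\}$ and $\{|u_n|>M\}$ does not repair this: even on $\{|u_n|\le M\}$ you still have no $L^p$ control for $p<2^\ast_s$.

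The paper closes this gap by a two-sided truncation: it sets $v_n:=\chi_\eta(u_n)$ where $\chi_\eta$ vanishes both for $|t|\le t_\eta$ and for $|t|\ge 2T_\eta$. The lower cutoff is the crucial extra ingredient, since it yields $|v_n|^2\le C_\eta|u_n|^{2^\ast_s}$ and hence $(v_n)$ is bounded in $L^2$, therefore in $H^s_{\rm rad}(\RN)$, where the compact embedding into $L^p$ \emph{does} apply. This controls the ``middle range'' $2t_\eta\le|u_n|\le T_\eta$ on the exterior, while the small and large regimes are handled directly by the $L^{2^\ast_s}$ bound via \eqref{eq:3.6}. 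With this device the paper proves \eqref{eq:3.11}, and then---rather than testing $\rd I_\e(u_n)-\rd I_\e(u)$ against $u_n-u$ as you suggest---it uses Fatou's lemma on the sign-definite term $f_{\e,-}(u_n)u_n\ge0$ together with \eqref{eq:3.10} to squeeze $\|u_n\|_{\cDs{\RN}}^2\to\|u_0\|_{\cDs{\RN}}^2$. Your outline becomes correct once you insert this truncation step; without it the compactness claim is unfounded.
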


	\begin{proof}
Assume that $(u_n)_n \subset \cDsr$ satisfies 
$I_\e(u_n) \to c \in \R$ and $\| \rd J_\e(0,u_n) \|_{\R \times X^\ast} \to 0  $. From 
	\[
		N c + o(1) = N I_\e(u_n) - \rd_\theta J_\e (0,u_n) 
		= s \| u_n \|_{\cDs{\RN}}^2,  
	\]
we find that $(u_n)_n$ is bounded in $\cDs{\RN}$.

	Up to a subsequence, suppose $u_n \rightharpoonup u_0$ weakly in $\cDs{\RN}$ 
and choose a $C_0>0$ so that 
	\begin{equation}\label{eq:3.9}
		  \| u_0 \|_{L^{2^\ast_s} (\RN)} 
		 + \sup_{ n \geq 1 }  \| u_n \|_{L^{2^\ast_s} (\RN) } \leq C_0.
	\end{equation}
To prove a strong convergence, it suffices to verify $\|u_n\|_{\cDs{\RN}} \to \|u_0\|_{\cDs{\RN}}$. 
For this purpose, 
we first remark that $u_n \to u_0$ strongly in $L^p_{\rm loc} (\RN)$ for any $1 \leq p < 2^\ast_s$. 
By Strauss' lemma (see \cite{S-77,BL-83-1}) and (f2'), for all $\varphi \in C^\infty_0(\RN)$, 
it follows that
	\[
		\int_{\RN} f_{+} (u_n) \varphi \rd x \to \int_{\RN} f_+(u_0) \varphi \rd x, \quad 
		\int_{\RN} f_{\e,-} (u_n) \varphi \rd x \to \int_{\RN} f_{\e,-} (u_0) \varphi \rd x,
	\]
which implies $\rd I_\e (u_0) = 0$. In particular, 
	\begin{equation}\label{eq:3.10}
		\|u_0\|^2_{\cDs{\RN}} = \int_{\RN} f_+(u_0) u_0 \rd x - \int_{\RN} f_{\e,-} (u_0) u_0 \rd x.
	\end{equation}

	Next, we shall prove 
	\begin{equation}\label{eq:3.11}
		\lim_{n \to \infty} \int_{\RN} f_{+} (u_n) u_n \rd x = \int_{\RN} f_+(u_0) u_0 \rd x. 
	\end{equation}
By Strauss' lemma and (f2'), for any $R>0$, we get 
	\begin{equation}\label{eq:3.12}
		\lim_{n \to \infty} \int_{|x| \leq R}  f_+(u_n) u_n \rd x 
		= \int_{|x| \leq R} f_+(u_0) u_0 \rd x .
	\end{equation}

	Let $\eta > 0$. Since $f_+(u_0) u_0 \in L^1(\RN)$ thanks to \eqref{eq:3.6}, choose an $R_\eta > 0$ so that 
	\begin{equation}\label{eq:3.13}
		\int_{|x| > R_\eta} |f_+(u_0) u_0| \rd x < \eta_.
	\end{equation}
Next, from \eqref{eq:3.6}, there exist $ 0 < t_\eta < T_\eta < \infty$ so that 
	\[
		|f_+(t) t | \leq \eta t^{2^\ast_s} \quad \text{if $|t| \leq 2t_\eta$ or $|t| \geq T_\eta$  }.
	\]
Hence, \eqref{eq:3.9} implies 
	\begin{equation}\label{eq:3.14}
		\int_{[|u_0| \leq 2 t_\eta] \cap [ |u_0| \geq T_\eta ] } |f_+(u_0) u_0 | \rd x 
		+ \sup_{ n \geq 1 } 
			\int_{[|u_n| \leq 2 t_\eta] \cap [ |u_n| \geq T_\eta ] } |f_+(u_n) u_n | \rd x 
			\leq 2 C_0^{2^\ast_s} \eta .
	\end{equation}

	Set $v_n(x) := \chi_\eta (u_n(x))$ where 
$ \chi_\eta \in C^\infty_0(\R)$ with 
	\[
		\chi_\eta(-t) = - \chi_\eta(t),  \quad |\chi_\eta(t)| \leq |t|, \quad 
		\chi_\eta(t) = 0 \quad \text{if $|t| \leq t_\eta$ or $|t| \geq 2 T_\eta$}, 
		\quad \chi_\eta (t) = t \quad \text{if $2 t_\eta \leq |t| \leq  T_\eta$}.
	\]
Noting that there exists a $C_\eta > 0$ so that 
	\[
		| v_n(x) |^2 \leq C_\eta |u_n(x)|^{2^\ast_s}, \quad 
		| v_n(x) - v_n(y) | \leq \| \chi_\eta' \|_{L^\infty(\R)} | u_n(x) - u_n(y) |
	\]
for $x, y \in \RN$, one sees that 
$(v_n)_n$ is bounded in $H^s_{\rm rad}(\RN)$. 
From the pointwise convergence, we may assume that 
$v_n \rightharpoonup v_0(x) =: \chi_\eta (u_0(x))$ weakly in $H^s_{\rm rad} (\RN)$. 
Next, fix $p_0 \in (2,2^\ast_s)$ and $C_{\eta,p_0}$ such that 
	\[
		\left| f_+(t)t \right| \leq C_{\eta,p_0} |t|^{p_0} \quad 
		\text{for all $t_\eta \leq |t| \leq 2 T_\eta$}.
	\]
Choosing $R_{\eta,p_0} \geq R_\eta$, we obtain
	\[
		\int_{  |x| \geq R_{\eta,p_0}  } |v_0(x)|^{p_0} \rd x \leq \frac{\eta}{C_{\eta,p_0}}. 
	\]
Since $H^s_{\rm rad} (\RN) \subset L^p(\RN)$ is compact for $2 < p < 2^\ast_s$ 
due to Lions \cite{L-82}, one has 
	\begin{equation}\label{eq:3.15}
		\begin{aligned}
			\limsup_{n \to \infty} \int_{ [ |x| \geq R_{\eta,p_0} ] \cap [ 2t_\eta \leq |u_n| \leq T_\eta ] } 
			|f_+(u_n) u_n| \rd x 
			&\leq \limsup_{n \to \infty} \int_{ [ |x| \geq R_{\eta,p_0} ] } 
			|f_+ (v_n) v_n | \rd x 
			\\
			&\leq C_{\eta,p_0} \limsup_{n \to \infty} \| v_n \|_{L^{p_0}([ |x| \geq R_{\eta,p_0} ])}^{p_0}
			\\
			&= C_{\eta,p_0} \| v_0 \|_{L^{p_0}([ |x| \geq R_{\eta,p_0} ])}^{p_0} \leq \eta. 
		\end{aligned}
	\end{equation}

From \eqref{eq:3.13} through \eqref{eq:3.15}, we observe that 
	\[
		\limsup_{n \to \infty} \int_{|x| \geq R_{\eta,p_0}} 
		\left| f_+(u_n) u_n - f_+(u_0) u_0 \right| \rd x \leq C_1 \eta
	\]
for some $C_1>0$, which is independent of $\eta$. 
Hence, from \eqref{eq:3.12} we infer that 
	\[
		\begin{aligned}
			\limsup_{n \to \infty} \left| \int_{\RN} f_+(u_n) u_n - f_+(u_0) u_0 \rd x \right| 
			&\leq \limsup_{n \to \infty}\left( \int_{|x| \leq R_{\eta,p_0}} 
			+ \int_{|x| \geq R_{\eta,p_0}} \right) 
			\left| f_+(u_n) u_n - f_+(u_0) u_0  \right| \rd x 
			\\
			&\leq C_2 \eta
		\end{aligned}
	\]
for some $C_2$ which does not depend on $\eta$. 
Since $\eta > 0 $ is arbitrary, \eqref{eq:3.11} holds.

	Now, from $f_-(t) t \geq 0$ for each $ t \in \R$, 
Fatou's lemma, $\| \rd I_\e(u_n) \|_{\cDs{\RN}^\ast} \to 0$, \eqref{eq:3.10} and \eqref{eq:3.11}, 
it follows that
	\[
		\begin{aligned}
			\| u_0 \|_{\cDs{\RN}}^2 &\leq \liminf_{n \to \infty}  \| u_n \|_{\cDs{\RN}}^2 
			\\
			&\leq \limsup_{n \to \infty} \| u_n \|_{\cDs{\RN}}^2 
			\\
			&= \limsup_{n \to \infty} \left\{ 
			\rd I_\e (u_n) u_n + \int_{\RN} f_+(u_n) u_n \rd x - \int_{\RN} f_{\e,-}(u_n) u_n \rd x
			 \right\}
			 \\
			 &\leq \int_{\RN} f_+(u_0) u_0 \rd x - \int_{\RN} f_{\e,-} (u_0) u_0 \rd x 
			 = \| u_0 \|_{\cDs{\RN}}^2.
		\end{aligned}
	\]
Hence, $\| u_n \|_{\cDs{\RN}}^2 \to \| u_0 \|_{\cDs{\RN}}^2$ and 
$\| u_n - u_0 \|_{\cDs{\RN}} \to 0$ hold as $n \to \infty$. 
This completes the proof. 
	\end{proof}

	\begin{remark}\label{remark:3.5}
From the above argument, under \eqref{eq:3.6}, we may observe that the functionals 
	\[
		u \mapsto \int_{\RN} F_+(u) \rd x, \quad u \mapsto \int_{\RN} f_+(u)u \rd x : 
		\cDsr \to \R
	\]
are weakly continuous. 
	\end{remark}

Now we verify Theorem \ref{theorem:1.1}:

	\begin{proof}[Proof of Theorem \ref{theorem:1.1}]
By Theorem \ref{theorem:2.4} and Lemmas \ref{lemma:3.2} and \ref{lemma:3.4}, there exist 
$(u_{\e,k})_{0<\e \leq 1, 1 \leq k}$ such that 
	\[
		\begin{aligned}
			&I_\e( u_{\e,k}  ) = c_{\e,k} \to \infty, \quad 
			\rd I_\e ( u_{\e,k} ) = \rd_u J_\e(0,u_{\e,k}) = 0, 
			\\
			&
			P_\e (u_{\e,k}) := \rd_\theta J_\e (0, u_{\e,k}) 
			= \frac{N-2s}{2} \| u_{\e,k} \|_{\cDs{\RN}}^2 
			- N \int_{\RN} F_+(u_{\e,k}) - F_{\e,-} (u_{\e,k}) \rd x . 
		\end{aligned}
	\]
By Remark \ref{remark:3.3} and 
	\begin{equation}\label{eq:3.16}
		\frac{s}{N} \| u_{\e,k} \|_{\cDs{\RN}}^2 
		= I_\e(u_{\e,k}) - \frac{P_\e(u_{\e,k})}{N} = c_{\e,k} \leq \ov{c}_k < \infty 
		\quad {\rm for\ each}\ \e \in (0,1],
	\end{equation}
one sees that 
$(u_{\e,k})_{0<\e \leq 1}$ is bounded in $\cDsr$ and 
up to a subsequence, we may assume $u_{\e,k} \rightharpoonup u_{0,k}$ weakly in $\cDs{\RN}$.

	Let $\varphi \in C^\infty_0(\RN)$. By the weak convergence in $\cDsr$, $\rd I_\e (u_{\e,k}) = 0$, 
\eqref{align:3.5} and Strauss' lemma, one has  
	\begin{equation}\label{eq:3.17}
		\la u_{0,k} , \varphi \ra_{\cDs{\RN} } 
		= \lim_{\e \to 0} \la u_{\e,k} , \varphi \ra_{\cDs{\RN}} 
		= \lim_{\e \to 0} \int_{\RN} \left\{ f_+(u_{\e,k}) - f_{\e,-} (u_{\e,k})  \right\} \varphi \rd x 
		= \int_{\RN} f(u_{0,k}) \varphi \rd x. 
	\end{equation}
Thus, $u_{0,k}$ is a solution of \eqref{eq:1.1}.

	Next, from Remark \ref{remark:3.5}, it follows that 
	\begin{equation}\label{eq:3.18}
		\int_{\RN} F_+(u_{\e,k}) \rd x \to \int_{\RN} F_+(u_{0,k}) \rd x, \quad 
		\int_{\RN} f_+(u_{\e,k}) u_{\e,k} \rd x \to \int_{\RN} f_+(u_{0,k}) u_{0,k} \rd x.
	\end{equation}
By $\rd I_\e(u_{\e,k}) u_{\e,k} = 0$, $I_\e (u_{\e,k}) = c_{\e,k}$ and \eqref{eq:3.16}, we observe that 
	\begin{equation}\label{eq:3.19}
		\sup_{ 0 < \e \leq 1} \int_{\RN} F_{\e,-}(u_{\e,k}) \rd x 
		+ \sup_{ 0 < \e \leq 1} \int_{\RN} f_{\e,-}(u_{\e,k}) u_{\e,k} \rd x < \infty.
	\end{equation}
Hence, Fatou's lemma with \eqref{eq:3.19} gives 
	\begin{equation}\label{eq:3.20}
		\begin{aligned}
			\infty &> \liminf_{\e \to 0} \int_{\RN} F_{\e,-} (u_{\e,k}) \rd x \geq \int_{\RN} F_{-} (u_{0,k}) \rd x, 
			\\
			\infty &> \liminf_{\e \to 0} \int_{\RN} f_{\e,-} (u_{\e,k}) u_{\e,k} \rd x 
			\geq \int_{\RN} f_{-} (u_{0,k}) u_{0,k} \rd x .
		\end{aligned}
	\end{equation}
Thus, $| I_0(u_{0,k}) | < \infty$.

	Next, as in the proof of Lemma \ref{lemma:3.1}, \eqref{eq:3.17} implies 
	\[
		\la u_{0,k} , \varphi_R u_{0,k} \ra_{\cDs{\RN} } = \int_{\RN} f_+(u_{0,k}) u_{0,k} \varphi_R \rd x 
		- \int_{\RN} f_- (u_{0,k}) u_{0,k} \varphi_R \rd x
	\]
where $\varphi_R \in C^\infty_0(\RN)$ where $0 \leq \varphi_R \leq 1$, 
$\varphi_R \equiv 1$ on $B_R(0)$ and $\varphi_R= 0$ on $\RN \setminus B_{2R} (0)$. 
By \eqref{eq:3.20}, $f_\pm(t) t \geq 0$ for $t \in \R$ 
and the dominated convergence theorem, letting $R \to \infty$, we obtain 
	\begin{equation}\label{eq:3.21}
		\| u_{0,k} \|_{\cDs{\RN}}^2 = \int_{\RN} f_+(u_{0,k}) u_{0,k} \rd x 
		- \int_{\RN} f_-(u_{0,k}) u_{0,k} \rd x 
		= \int_{\RN} f(u_{0,k}) u_{0,k} \rd x. 
	\end{equation}

	Now \eqref{eq:3.18}, \eqref{eq:3.20} and \eqref{eq:3.21} yield 
	\[
		\begin{aligned}
			\| u_{0,k} \|_{\cDs{\RN}}^2 
			& \leq \liminf_{\e \to 0} \| u_{\e,k} \|_{\cDs{\RN}}^2 
			\\
			&\leq \limsup_{\e \to 0} \| u_{\e,k} \|_{\cDs{\RN}}^2 
			\\
			&= \limsup_{\e \to 0} 
			\left[ \rd I_\e (u_{\e,k}) u_{\e,k} + \int_{\RN} f_+(u_{\e,k}) u_{\e,k} 
			- \int_{\RN} f_{\e,-} (u_{\e,k}) u_{\e,k} \rd x \right]
			\\
			&\leq \int_{\RN} f_+(u_{0,k}) u_{0,k} \rd x - \int_{\RN} f_-(u_{0,k}) u_{0,k} \rd x 
			= \| u_{0,k} \|_{\cDs{\RN}}^2. 
		\end{aligned}
	\]
Therefore, we have $\| u_{\e,k} \|_{\cDs{\RN}} \to \| u_{0,k} \|_{\cDs{\RN}}$ and 
$\| u_{\e,k} - u_{0,k} \|_{\cDs{\RN}} \to 0$. 
From $c_{1,k} \to \infty$ and 
$c_{1,k} \leq c_{\e,k} = s \| u_{\e,k} \|_{D^s}^2 / N $ due to 
\eqref{eq:3.16}, we conclude that $c_{1,k} \leq s \| u_{0,k} \|_{\cDs{\RN} }^2 / N $ and 
$\| u_{0,k} \|_{\cDs{\RN}} \to \infty$. 
Thus, we complete the proof. 
\end{proof}

	\begin{remark}\label{remark:3.6}
(i) Regarding $I_{0} (u_{0,k}) \to \infty$ as $k \to \infty$, 
we may prove this claim provided either every solution of \eqref{eq:1.1} 
satisfies the Pohozaev identity or the conditions (f0), (f1), (f2') and (f3) with 
	\[
		- \infty < \liminf_{|t| \to 0} \frac{f(t)}{|t|^{2^\ast_s-2}t }.
	\]
Notice that in the latter case, $f_\e (t) = f$ for sufficiently small $\e>0$. 
Thus, in either case, we may prove $P_0(u_{0,k}) = 0$ in the above proof 
by $u_{\e,k} \to u_{0,k}$ strongly in $\cDs{\RN}$. 
This yields  $I_0(u_{0,k}) = I_0(u_{0,k}) - P_0(u_{0,k}) = s \| u_{0,k} \|_{\cDs{\RN}}^2 / N \to \infty$.

(ii) 
When $N \geq 3$ and $s = 1$, by changing 
$\cDs{\RN}$ and $\| u \|_{\cDs{\RN}}$ to $\cD^1(\RN)$ and $\| \nabla u \|_{L^2(\RN)}$, 
it is not difficult to see that all the arguments in the above work. 
Moreover, according to Berestycki and Lions \cite[Proposition 1]{BL-83-1}, 
the Pohozaev identity is satisfied for each solution $u$ of \eqref{eq:1.1} 
with $\int_{\RN} F(u) \rd x < \infty$. Hence, the solutions $u_{0,k}$ found in the above 
for the case $N \geq 3$ and $s=1$ enjoy $P_0(u_{0,k}) = 0$. 
Thus, we also get $I_0(u_{0,k}) \to \infty$ by (i) 
and we may provide another proof for 
the results of \cite{BL-83-3,Stru-82,Stru-86}. 
	\end{remark}

	Next, we prove Theorem \ref{theorem:1.3} via Theorem \ref{theorem:2.3}:

	\begin{proof}[Proof of Theorem \ref{theorem:1.3}]
We first remark that under the assumptions of Theorem \ref{theorem:1.3}, 
$I_0 \in C^1(\cDs{\RN}, \R)$, and 
$X:= \cDsr$ and $I_0$ satisfy \eqref{T1} and \eqref{T2}. 
Furthermore, we may observe that the proofs of Lemmas \ref{lemma:3.2} and \ref{lemma:3.4} 
still work for $(X,I_0)$ and 
$I_0$ satisfies the assumptions of Theorem \ref{theorem:2.3}. 
Hence, by Theorem \ref{theorem:2.3}, we may find a $u_0 \in \cDsr$ satisfying 
	\[
		\rd J(0,u_0) = 0, \quad 0< I(u_0) = c_{mp} 
		= \inf_{ \gamma \in \Gamma} \max_{0 \leq t \leq 1} I_0(\gamma(t))
	\]
where $\Gamma := \set{ \gamma \in  C([0,1] ,\cDsr ) | \gamma(0) = 0, \ I(\gamma(1)) < 0 }$. 
Since $I_0(u) \geq 0$ for $\| u \|_{\cDs{\RN}} \leq \rho_0$, we may omit the condition 
$\rho_0 < \| \gamma (1) \|_{\cDs{\RN}}$ from the definition of $\Gamma$. 
We also note that $P_0(u_0) = \rd_{\theta} J(0,u_0) = 0$.

	Finally, we shall show the last equality in \eqref{eq:1.4}. 
To this end, let $v \in \cDs{\RN}$ be a solution of \eqref{eq:1.1} with 
$P_0(v) = 0$. Our aim is to prove $c_{mp} \leq I_0(v)$. 
Denote by $v^\ast$ the Schwarz rearrangement of $v$. 
Then the following hold (see \cite{AL-89,LL-01}):
	\[
		\| v^\ast \|_{\cDs{\RN}} \leq \| v \|_{\cDs{\RN}}, \quad 
		\int_{  \RN } F(v^\ast ) \rd x = \int_{  \RN } F(v) \rd x.
	\]
From this, it follows that
	\[
		I_0 ( v^\ast ( \theta^{-1} \cdot ) ) 
		= \frac{\theta^{ N-2s }}{2} \| v^\ast \|_{\cDs{\RN}}^2 
		- \theta^{N} \int_{  \RN } F(v^\ast) \rd x 
		\leq 
		I_0( v ( \theta^{-1} \cdot ) ) 
		\quad \text{for all $\theta > 0$}.
	\]

	On the other hand, by $P_0(v) = 0$, we observe that 
	\[
		\int_{  \RN } F(v) \rd x > 0, \quad I_0( v ( \theta^{-1} \cdot ) ) \to - \infty 
		\ \text{as $\theta \to \infty$}, \quad 
		\max_{0 \leq \theta } I_0( v ( \theta^{-1} \cdot)  ) 
		= I_0(v).
	\]
By these facts, a path defined by 
$\gamma_v(0) := 0$, $\gamma_{v} (t) := v^\ast ( (t\theta_0)^{-1} \cdot )$ for sufficiently large $\theta_0>0$ 
satisfies $\gamma_{v} \in \Gamma$. Thus, 
	\[
		I_0(u_0) = c_{mp} \leq \max_{0 \leq t \leq 1} I_0( \gamma_v (t) ) \leq I_0(v)
	\]
and we complete the proof. 
	\end{proof}


\section{Proof of Theorem \ref{theorem:1.5}}\label{section:4}


	In this section, we prove Theorem \ref{theorem:1.5} via 
Theorems \ref{theorem:2.3} and \ref{theorem:2.4}. 
We recall the notation in Introduction: 
	\[
		\cO_1 := O(m_1) \times O(m_1) \times \{ \mathrm{id}_{\R^{N-2m_1}} \} \subset O(N), \quad 
		\cO_2 := O(m_2) \times O(m_2) \times O(N-2m_2) \subset O(N)
	\]
and 
	\[
		\begin{aligned}
		\cD_{\cO_1}^s &:= 
		\Set{ u \in \cDs{\RN} |
		u(x_1,x_2,x_3) = u(|x_1|,|x_2|,x_3), \ 
		u(x_2,x_1,x_3) = - u(x_1,x_2,x_3)  },
		\\ 
		\cD_{\cO_2}^s &:= 
			\Set{ u \in \cDs{\RN} |
			u(x_1,x_2,x_3) = u(|x_1|,|x_2|,|x_3|), \ 
			u(x_2,x_1,x_3) = - u(x_1,x_2,x_3)  },
		\\
		H^s_{\cO_1} &:= 
		\Set{ u \in H^s(\RN) | 
			u (x_1,x_2,x_3) = u(|x_1|,|x_2|,x_3), \ 
			u(x_2,x_1,x_3) = - u(x_1,x_2,x_3)},
		\\
		H^s_{\cO_2} &:= 
		\Set{ u \in H^s(\RN) | 
		u (x_1,x_2,x_3) = u(|x_1|,|x_2|,|x_3|), \ 
		u(x_2,x_1,x_3) = - u(x_1,x_2,x_3)}.
		 \end{aligned}
	\]
We shall find solutions in $\cD^s_{\cO_i}$ and $H^s_{\cO_i}$ which 
are nonradial and sign-changing. 
Furthermore, remark that $T_\theta u (x) := u(e^{-\theta} x)$ satisfies \eqref{T1} 
with $\cD^s_{\cO_i}$ and $H^s_{\cO_i}$.

	Before proving Theorem \ref{theorem:1.5}, we prepare one lemma and 
it is proved in \cite[Lemma 2.1]{M-19} when $s=1$.

	\begin{lemma}\label{lemma:4.1}
		Let $(u_n) \subset \cDs{\RN}$ be bounded and satisfy 
		\begin{equation}\label{eq:4.1}
			\lim_{n \to \infty} \sup_{z \in \Z^N} \| u_n \|_{L^2(z+Q)} = 0, \quad 
			Q := \left[-\frac{1}{2},\frac{1}{2}\right]^N.
		\end{equation}
	Then $\int_{  \RN } G(u_n) \rd x \to 0$ as $n \to \infty$ for every $G \in C(\R)$ 
	satisfying 
		\begin{equation}\label{eq:4.2}
			\lim_{t \to 0} \frac{G(t)}{|t|^{2^\ast_s}} = 0 
			= \lim_{|t| \to \infty} \frac{G(t)}{|t|^{2^\ast_s}}.
		\end{equation}
	\end{lemma}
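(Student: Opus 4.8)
The plan is to split the estimate according to the size of $u_n(x)$, controlling the ``small'' and ``large'' regimes by the growth hypothesis \eqref{eq:4.2} and the ``intermediate'' regime by the vanishing hypothesis \eqref{eq:4.1} together with compact Sobolev embeddings on the unit cube. Concretely, fix $\delta>0$. By \eqref{eq:4.2} there exist $0<\tau_\delta<T_\delta<\infty$ such that $|G(t)|\le\delta|t|^{2^\ast_s}$ whenever $|t|\le\tau_\delta$ or $|t|\ge T_\delta$. Since $(u_n)$ is bounded in $\cDs{\RN}$, Sobolev's inequality gives a uniform bound on $\|u_n\|_{L^{2^\ast_s}(\RN)}$, so
\[
	\int_{[|u_n|\le\tau_\delta]\cup[|u_n|\ge T_\delta]}|G(u_n)|\,\rd x
	\le\delta\int_{\RN}|u_n|^{2^\ast_s}\,\rd x\le C\delta
\]
with $C$ independent of $n$ and $\delta$. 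It remains to handle the integral over $A_n:=[\tau_\delta\le|u_n|\le T_\delta]$.

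On $A_n$ we have $|G(u_n)|\le M_\delta$ where $M_\delta:=\max_{\tau_\delta\le|t|\le T_\delta}|G(t)|$, and also $|u_n|\ge\tau_\delta$, so for any fixed exponent $p\in(2,2^\ast_s)$ we can bound $|G(u_n)|\mathbf{1}_{A_n}\le M_\delta\tau_\delta^{-p}\,|u_n|^p\,\mathbf{1}_{A_n}\le C_\delta|u_n|^p$. Thus it suffices to show $\int_{\RN}|u_n|^p\mathbf{1}_{A_n}\,\rd x\to0$; in fact I will show the stronger statement that \eqref{eq:4.1} forces $\int_{\RN}\min\{|u_n|^2,|u_n|^{2^\ast_s}\}\,\rd x\to0$ type control, but the clean route is the lattice-cube argument of Lions. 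Split $\RN=\bigcup_{z\in\Z^N}(z+Q)$. On each cube, by the Gagliardo--Nirenberg / interpolation inequality together with the fractional Sobolev embedding on $z+Q$ (or on a fixed dilate), one has
\[
	\int_{z+Q}|u_n|^p\,\rd x\le C\,\|u_n\|_{L^2(z+Q)}^{p-\theta p}\,\Big(\|u_n\|_{L^2(z+Q)}^2+[u_n]^2_{H^s(z+Q)}\Big)^{\theta p/2}
\]
for a suitable $\theta\in(0,1)$ with $\theta p\le 2$ (possible since $p<2^\ast_s$), where $[\cdot]_{H^s}$ is the Gagliardo seminorm restricted to the cube. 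Summing over $z\in\Z^N$, using that $\sum_z[u_n]^2_{H^s(z+Q)}\le C\|u_n\|^2_{\cDs{\RN}}$ is uniformly bounded (the nonlocal seminorm over all of $\RN$ dominates the sum of local ones up to a constant — this requires a short argument, see below), and that $\sum_z\|u_n\|^2_{L^2(z+Q)}\le$ boundedness is \emph{not} available under $\cDs{\RN}$ alone. Here is the fix: since $\theta p\le2$, write the right-hand side as $\big(\sup_z\|u_n\|_{L^2(z+Q)}\big)^{p-\theta p}$ times $\sum_z\|u_n\|_{L^2(z+Q)}^{\theta p}\big(1+[u_n]^2_{H^s(z+Q)}/\|u_n\|^2_{L^2(z+Q)}\big)^{\theta p/2}$, and bound the latter sum, via Hölder with exponents $2/(\theta p)$ and its conjugate, by $\big(\sum_z\|u_n\|^2_{L^2(z+Q)}+[u_n]^2_{H^s(z+Q)}\big)^{\theta p/2}$ times a counting factor — again not summable. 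The correct standard statement (Lions' lemma adapted to $\cD^s$) is: a bounded sequence in $\cDs{\RN}$ satisfying \eqref{eq:4.1} tends to zero in $L^p(\RN)$ for every $p\in(2,2^\ast_s]$ \emph{except} one must invoke it with care at the endpoint; for $p<2^\ast_s$ it is clean. I will therefore cite the fractional vanishing lemma (e.g. from \cite{FQT-12} or an analogue of \cite{L-82}) to conclude directly that $\|u_n\|_{L^p(\RN)}\to0$ for each fixed $p\in(2,2^\ast_s)$, whence $\int_{\RN}|u_n|^p\mathbf{1}_{A_n}\,\rd x\to0$ and so $\limsup_{n\to\infty}\int_{\RN}|G(u_n)|\,\rd x\le C\delta$. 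Letting $\delta\to0$ finishes the proof.

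The main obstacle is the intermediate regime: making precise that the vanishing condition \eqref{eq:4.1}, which only controls $L^2$ mass on unit cubes, upgrades to vanishing of the $L^p$ norm for $p\in(2,2^\ast_s)$ for a sequence bounded merely in the homogeneous space $\cDs{\RN}$ (not in $L^2$). The standard Lions argument uses a local Sobolev inequality on each cube $z+Q$ interpolating $L^p$ between $L^2$ and $W^{s,2}(z+Q)$, and then exploits $\theta p\le2$ to pull out $\sup_z\|u_n\|_{L^2(z+Q)}^{p-\theta p}\to0$ while the remaining sum stays bounded by $\|u_n\|^2_{\cDs{\RN}}$ up to the constant from $\sum_z[u_n]^2_{H^s(z+Q)}\lesssim[u_n]^2_{\cD^s(\RN)}$ (the latter holds because each pair $(x,y)$ with $x,y$ in the same or adjacent cubes contributes to only boundedly many local seminorms). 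I expect this bookkeeping — the local-to-global seminorm comparison and the Hölder exponent chase — to be the only genuinely technical point; everything else is a routine three-region decomposition.
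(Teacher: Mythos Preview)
There is a genuine gap in the intermediate regime. You correctly identify that the only nontrivial step is to show
\[
\int_{[\tau_\delta \leq |u_n| \leq T_\delta]} |u_n|^p \,\rd x \to 0
\]
for some $p \in (2,2^\ast_s)$, and you propose to deduce this from a Lions-type vanishing lemma applied directly to $(u_n)$. But the lemma you cite (\cite[Lemma~2.2]{FQT-12}) requires boundedness in $H^s(\RN)$, not merely in $\cDs{\RN}$, and in fact the conclusion $\|u_n\|_{L^p(\RN)} \to 0$ is \emph{false} in general for $\cDs{\RN}$-bounded sequences satisfying \eqref{eq:4.1}. A counterexample: take $u_n(x) := n^{-N/2^\ast_s} \varphi(x/n)$ for any nonzero $\varphi \in C_0^\infty(\RN)$. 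By scale invariance $\|u_n\|_{\cDs{\RN}}$ is constant, and $\sup_z \|u_n\|_{L^2(z+Q)} \leq n^{-N/2^\ast_s}\|\varphi\|_{L^\infty} \to 0$, yet $\|u_n\|_{L^p(\RN)}^p = n^{N(1-p/2^\ast_s)}\|\varphi\|_{L^p}^p \to \infty$ for every $p < 2^\ast_s$. So neither the cited lemma nor your lattice-cube bookkeeping can close as stated; the obstruction is exactly that $\sum_z \|u_n\|_{L^2(z+Q)}^2 = \|u_n\|_{L^2(\RN)}^2$ is uncontrolled, as you yourself noticed.

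The paper's fix is a one-line idea that you are missing: truncate \emph{before} invoking Lions. Choose $\chi_\eta \in C_0^\infty(\R)$ odd with $|\chi_\eta(t)| \leq |t|$, $\chi_\eta(t) = t$ on $[2\tau_\delta, T_\delta]$, and $\chi_\eta(t)=0$ for $|t| \leq \tau_\delta$ or $|t| \geq 2T_\delta$, and set $v_n := \chi_\eta(u_n)$. Then $(v_n)$ is bounded in $H^s(\RN)$: the seminorm is controlled because $|v_n(x)-v_n(y)| \leq \|\chi_\eta'\|_{L^\infty}|u_n(x)-u_n(y)|$, and the $L^2$ norm is controlled because $|v_n|^2 \leq C_\eta |u_n|^{2^\ast_s}$ (since $v_n$ vanishes where $|u_n| < \tau_\delta$). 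Moreover $|\chi_\eta(t)|\leq|t|$ ensures $(v_n)$ inherits \eqref{eq:4.1}. Now the standard $H^s$ vanishing lemma applies to $(v_n)$, yielding $\|v_n\|_{L^p(\RN)} \to 0$, and since $u_n = v_n$ on $[2\tau_\delta \leq |u_n| \leq T_\delta]$ the intermediate integral is handled. Your three-region decomposition and the small/large estimates are fine; only this truncation step was missing.
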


	\begin{proof}
Assume that $(u_n)$ is bounded and satisfies \eqref{eq:4.1}. 
We argue in a similar way to the proof of Lemma \ref{lemma:3.4}. 
Set 
	\[
		C_0 := \sup_{n \geq 1} \| u_n \|_{L^{2^\ast_s}(\RN)}  < \infty.
	\]
By \eqref{eq:4.2}, for any $\eta > 0$, we may choose $0<2t_\eta < T_\eta$ so that 
	\begin{equation}\label{eq:4.3}
		|G(t) | \leq \eta |t|^{2^\ast_s} \ \text{for $|t| \leq 2t_\eta$}, \quad 
		|G(t) | \leq \eta |t|^{2^\ast_s} \ \text{for $|t| \geq T_\eta$}.
	\end{equation}
Pick up $\chi_\eta \in C^\infty_0(\R)$ satisfying 
	\[
		\chi_\eta(-t) = - \chi_\eta(t), \quad 
		\left| \chi_\eta (t) \right| \leq |t|, \quad 
		\chi_\eta (t) = 0 \quad \text{if $|t| \leq t_\eta$ or $|t| \geq 2T_\eta$}, \quad 
		\chi_\eta (t) = t \quad \text{if $2 t_\eta \leq |t| \leq T_\eta$}.
	\]
Finally, set $v_n(x) := \chi_\eta (u_n(x))$. As in the proof of Lemma \ref{lemma:3.4}, 
we observe that $(v_n)$ is bounded in $H^s(\RN)$. In addition, by $|\chi_\eta(t)| \leq |t|$, 
$(v_n)$ also satisfies \eqref{eq:4.1}. Hence, by Felmer, Quaas and Tan \cite[Lemma 2.2]{FQT-12} (or \cite[Lemma 4.5]{I-17}), 
for any $ p \in (2,2^\ast_s)$, $\| v_n \|_{L^p(\RN)} \to 0$ as $n \to \infty$. Fix $p \in (2,2^\ast_s)$ and 
choose a $C_{p,\eta}$ so that $|G(t)| \leq C_{p,\eta} |t|^p$ for every $t_\eta \leq |t| \leq 2T_\eta$. 
Since 
	\[
		\int_{[ 2t_\eta \leq |u_n| \leq T_\eta ]} \left| G(u_n(x)) \right| \rd x 
		\leq \int_{ [ 2t_\eta \leq |v_n| \leq T_\eta ] } \left| G(v_n(x)) \right| \rd x 
		\leq C_{p,\eta} \| v_n \|_{L^p(\RN)}^p \to 0,
	\]
it follows from \eqref{eq:4.3} that 
	\[
		\limsup_{n \to \infty} \left| \int_{  \RN } G(u_n) \rd x \right| 
		\leq \limsup_{n \to \infty} 
		\int_{  [|u_n| \leq 2t_\eta] \cup [|u_n| \geq T_\eta]} |G(u_n(x))| \rd x 
		\leq 2 C_0^{2^\ast_s} \eta.
	\]
Noting that $\eta > 0$ is arbitrary, we have $\int_{  \RN } G(u_n) \rd x \to 0$. 
	\end{proof}

	We first prove Theorem \ref{theorem:1.5} (i) and (ii).

	\begin{proof}[Proof of Theorem \ref{theorem:1.5} (i) and (ii)]
(i) Suppose (f0), \eqref{eq:1.3} and (f3). Then $I_0 \in C^1( \cD^s_{\cO_1} , \R )$. 
Moreover, as in Lemma \ref{lemma:3.2}, there exists a $\rho_0>0$ such that 
	\[
		\inf_{ \| u \|_{\cDs{\RN}} \leq \rho_0} I_0(u) = 0 < \inf_{ \| u \|_{\cDs{\RN}} = \rho_0} I_0(u).
	\]

	Next, we shall show the existence of $w_0 \in \cD_{\cO_1}^s$ satisfying $I_0(w_0) < 0$. 
To this end, we borrow an idea from \cite[Lemmas 4.2 and 4.3]{JL-18}. 
For $R>0$, set 
	\[
		\omega_R(t) := \zeta_1 \ \text{if $|t| \leq R$}, \quad 
		\omega_R(t) := \zeta_1(R+1-|t|) \ \text{if $R < |t| \leq R+1$}, \quad 
		\omega_R(t) := 0 \ \text{if $|t| > R+1$}.
	\]
Fix a $\varphi_R \in C^1_0(\R)$ with $0 \leq \varphi_R(t) \leq 1$, 
$\varphi_R(t) \equiv 1$ if $|t| \leq R^2$ and $\varphi_R(t) = 0$ 
if $|t| \geq R^2 + 1$. We define 
	\[
		w_R(x_1,x_2,x_3) := \varphi_{R} (|x_1|) \varphi_{R}(|x_2|) \varphi_R (|x_3|) 
		\left\{ \omega_R(|x_1|) - \omega_R ( |x_2| ) \right\} \in H^1_{\cO_1} 
		\subset \cD_{\cO_1}^s.
	\]
Due to $F(-t) = F(t)$ and the (anti)symmetry for functions in $\cD_{\cO_1}^s$, we set 
	\[
		\begin{aligned}
			I_{R,1} &:= \Set{ (x_1,x_2,x_3) \in \RN |\  |x_1| \leq R, \ |x_2| \leq R+1, \ |x_3| \leq R^2+1},
			\\
			I_{R,2} &:= \Set{ (x_1,x_2,x_3) \in \RN |\  |x_1| \leq R, \ R + 1 \leq |x_2| \leq R^2, \ |x_3| \leq R^2 },
			\\
			I_{R,3} &:= \Set{ (x_1,x_2,x_3) \in \RN |\  |x_1| \leq R, \ R + 1 \leq |x_2| \leq R^2, \ 
				R^2 \leq |x_3| \leq R^2 +1 },
			\\
			I_{R,4} &:= \Set{ (x_1,x_2,x_3) \in \RN |\  |x_1| \leq R, \ R^2 \leq |x_2| \leq R^2 + 1, \ 
				 |x_3| \leq R^2 +1 },
			\\
			I_{R,5} &:= \Set{ (x_1,x_2,x_3) \in \RN |\  R \leq |x_1| \leq R+1, \  |x_2| \leq R^2+1, 
				\ |x_3| \leq R^2+1 }
		\end{aligned}
	\]
when $N-2m_1 \geq 1$. When $N-2m_1=0$, we ignore the third component in the above and 
define $I_{R,1}, I_{R,2}, I_{R,4}, I_{R,5}$ since $I_{R,2}$ and $I_{R,3}$ are same in this case.

	For large $R>1$,  it is easily seen from 
$ (R^\alpha + 1)^\beta - R^{\alpha \beta} = O( R^{\alpha(\beta-1)} ) $ that 
	\begin{equation}\label{eq:4.4}
		\begin{aligned}
			\left| \int_{I_{R,1} \cup I_{R,3} \cup I_{R,4} \cup I_{R,5}  } F(w_R) \rd x \right|
			&\leq C \left( R^{2N-2m_1} + R^{2N-m_1-2} + R^{2N-m_1-1} \right) 
			& &\text{if $N-2m_1 \geq 1$},
				\\
			\left| \int_{ I_{R,1} \cup I_{R,4} \cup I_{R,5} } F(w_R) \rd x \right|
			&\leq C \left( R^{2m_1} + R^{3m_1-2} + R^{3m_1-1} \right) & &\text{if $N-2m_1=0$}.
			\end{aligned}
	\end{equation}
On the other hand, since $F(w_R(x_1,x_2,x_3)) = F(\zeta_1) > 0$ for $x \in I_{R,2}$, we have 
	\begin{equation}\label{eq:4.5}
			\int_{ I_{R,2} } F(w_R) \rd x 
			\geq 
			\left\{\begin{aligned}
			& c_{N,m_1} R^{2N-m_1} 
			& &\text{if $N-2m_1 \geq 1$},
			\\
			& c_{N,m_1} R^{3m_1} 
			& &\text{if $N-2m_1 =0$},
		\end{aligned}\right.
	\end{equation}
for some $c_{N,m_1} > 0$.

	By $m_1 \geq 2$, \eqref{eq:4.4} and \eqref{eq:4.5}, 
for sufficiently large $R_0>0$, we obtain 
	\[
		\int_{  \RN } F( w_{R_0} ) \rd x \geq 1.
	\]
Now for sufficiently large $\theta_0>0$, it follows that 
	\[
		I_0(w_{R_0} (e^{-\theta_0} \cdot ) ) 
		\leq \frac{e^{(N-2s)\theta_0}}{2} \| w_{R_0} \|_{\cDs{\RN}}^2 
		- e^{N\theta_0} < 0.
	\]
Thus, $w_0(x) := w_{R_0} (e^{-\theta_0} x) \in \cD_{\cO_1}$ satisfies $I_0(w_0) < 0$.

	Applying Theorem \ref{theorem:2.3} for $(\cD_{\cO_1}, I_0)$ without \PSPc, 
we may find $(u_n) \subset \cD_{\cO_1}^s$ such that 
	\[
		I_0(u_n) = \frac{1}{2} \| u_n \|_{\cDs{\RN}}^2 - \int_{  \RN } F(u_n) \rd x \to c_{mp} > 0, \quad 
		\left\| \rd J(0,u_n) \right\|_{(\R \times \cD_{\cO_1}^s)^\ast} \to 0.
	\]
Moreover, as in Lemma \ref{lemma:3.4}, we can prove that $(u_n)$ is bounded in $\cDs{\RN}$. If 
	\[
		\sup_{ z \in \Z^N} \| u_n \|_{L^2(z+Q)} \to 0,
	\]
then applying Lemma \ref{lemma:4.1} for $f_+(t)t$, we infer from $\rd_u J(0,u_n) u_n \to 0$ that 
	\[
		\| u_n \|_{\cD^s(\RN)}^2 + \int_{  \RN } f_-(u_n) u_n \rd x = o(1) + \int_{  \RN } f_+(u_n)u_n \rd x 
		= o(1),
	\]
which implies $\| u_n \|_{\cDs{\RN}} \to 0$. 
Since it follows from \eqref{eq:1.3} that $|F_-(t)| \leq C_0|t|^{2^\ast_s}$ for every $t \in \R$, 
we have a contradiction:
	\[
		0< c_{mp} = \lim_{n \to \infty} I_0(u_n) = 0.
	\]

	Now choose $(z_n)_{n=1}^\infty \subset \Z^N$ so that 
$\| u_n \|_{L^2(z_n+Q)} \to c >0$. 
Since $m_1 \geq 2$ and $(u_n)_n \subset \cDs{\RN}$ is bounded, 
replacing $Q$ by $R_0Q$ for sufficiently large $R_0>1$, 
we may suppose $z_n = (0,0,x_{n,3})$  (cf. Willem \cite[Theorem 1.24]{W}).  
Let $v_n(x) := u_n(x+z_n) \in \cD_{\cO_1}^s $ and 
$v_n \rightharpoonup v_0 \not\equiv 0$ weakly in $\cD_{\cO_1}^s$. 
From $\| v_n \|_{L^2(Q)} \to c > 0$ and $\| \rd J(0,u_n) \|_{(\R \times \cD^s_{\cO_1})^\ast} \to 0$, 
we deduce that $v_0 \not\equiv 0$ and $I_0'(v_0) = 0$ 
due to \eqref{eq:1.3}, Strauss' lemma and 
the principle of symmetric criticality due to Palais \cite[Theorem 1.28]{W}. 
Thus, $v_0$ is a nontrivial solution of \eqref{eq:1.1} and the statement (i) holds.

(ii) 
As in the proof of Theorem \ref{theorem:1.1}, we may assume that 
$f$ satisfies (f0), (f1), (f2') and (f3). Moreover, let $f_{\e,-}$ and $f_\e$ be as in section \ref{section:3}. 
Notice that by the principle of symmetric criticality due to Palais \cite[Theorem 1.28]{W}, 
we shall find critical points of $I_\e$ in $\cD_{\cO_2}^s$ by applying Theorem \ref{theorem:2.4} for 
$\cD^s_{\cO_2}, I_\e, T_\theta u(x) = u( e^{-\theta} x )$. 
It is easily seen that \eqref{T1} and \eqref{T2} are satisfied. 
As in the proof of Lemma \ref{lemma:3.2}, we may show that the assumption (i) in Theorem \ref{theorem:2.4} holds. 
For (ii), in \cite[Lemma 4.2]{JL-18}, under (f0) and (f3), the following maps $\ov{\gamma}_k$ are constructed: 
for each $k \geq 1$ and $\sigma \in S^{k-1}$, 
	\begin{equation}\label{eq:4.6}
		\ov{\gamma}_k \in C( S^{k-1} , H^1_{\cO_2} ), \ 
		\ov{\gamma}_k(-\sigma) = - \ov{\gamma}_k (\sigma), \ 
		{\rm supp}\, (\ov{\gamma}_k (\sigma)) \subset B_{R_k}(0), 
		\int_{\RN} F ( \ov{\gamma} (\sigma) ) \rd x \geq  1.
	\end{equation}
Now, we observe that 
for sufficiently large $\theta_k$, $\gamma_k(\sigma) (x) := \ov{\gamma}_k(\sigma) (e^{-\theta_k}x)$ satisfies (ii).

	Next, exploiting the argument in \cite[Theorem 1.24]{W}, 
the embedding $H^s_{\cO_2} \subset L^p(\RN)$ is compact for $2 < p < 2^\ast_s$. 
Hence, we may verify that $I_\e$ satisfies \PSPc in $\cD_{\cO_2}^s$ as in Lemma \ref{lemma:3.4}.

	From Theorem \ref{theorem:2.4}, there exist $(u_{\e,k})_{0<\e \leq 1, 1 \leq k}$ such that 
	\[
		I_\e(u_{\e,k}) = c_{\e,k}, \quad \rd I_\e (u_{\e,k}) = 0, \quad 
		P_\e (u_{\e,k}) = 0, \quad 
		c_{1,k} \leq c_{\e,k} \leq \ov{c}_{k} := \max_{\sigma \in D^k} 
		I_0 \left( \gamma_{0,k} (\sigma) \right) < \infty
	\]
where $c_{1,k} \to \infty$ and $\gamma_{0,k}(\sigma) := |\sigma| \ov{\gamma}_{k} (\sigma/|\sigma|)$ if $|\sigma|>0$ 
and $\gamma_{0,k} (0) = 0$. 
Now the rest of the proof is identical to that of Theorem \ref{theorem:1.1} and 
we may show 
$u_{\e} \to u_{0,k}$ strongly in $\cDs{\RN}$ where $u_{0,k}$ is a solution of \eqref{eq:1.1}. 
Thus, Theorem \ref{theorem:1.5} (ii) holds. 
	\end{proof}

	Next, we treat Theorem \ref{theorem:1.5} (iii) and (iv). 
First we remark that when $a=0$ we may assume (f2') without loss of generality 
due to Lemma \ref{lemma:3.1}. 
When $a>0$, we prove

	\begin{lemma}\label{lemma:4.2}
		Assume $f$ satisfies \textup{(F0)--(F3)} with $a>0$ and there exists a $\zeta_2 > \zeta_1$ such that
		$f(t) = 0$ for all $ t \geq \zeta_2$. Let $u \in H^s(\RN)$ be any solution of \eqref{eq:1.2}. 
		Then $\| u \|_{L^\infty(\RN)} \leq \zeta_2$. 
	\end{lemma}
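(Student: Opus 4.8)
Following the pattern of Lemma \ref{lemma:3.1}, the plan is to test the equation against the truncation $v_0 := (u-\zeta_2)_+$, the only new ingredient being a convenient representation of the bilinear form $\la\cdot,\cdot\ra_{s,a}$ adapted to the operator $(-\Delta+a)^s$. I first record a preliminary reduction: by \textup{(F1)}, the hypothesis $f\equiv 0$ on $[\zeta_2,\infty)$ and continuity of $f$, the function $f$ is bounded and in fact $|f(t)|\le C|t|$ for all $t\in\R$; hence $f(u)\in L^2(\RN)$ for $u\in H^s(\RN)$, and by density of $C^\infty_0(\RN)$ in $H^s(\RN)$ the weak formulation of \eqref{eq:1.2} extends to test functions $\varphi\in H^s(\RN)$.

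Next, using the subordination identity $\lambda^s=\frac{s}{\Gamma(1-s)}\int_0^\infty(1-e^{-t\lambda})t^{-1-s}\,\rd t$ with $\lambda=4\pi^2|\xi|^2+a$, Plancherel's theorem, and the evenness of the Gaussian heat kernel $g_t(x):=(4\pi t)^{-N/2}e^{-|x|^2/(4t)}$, I would derive, for all real-valued $u,v\in H^s(\RN)$,
\[
	\la u,v\ra_{s,a}
	= a^s\int_{\RN}uv\,\rd x
	+ \frac{s}{2\Gamma(1-s)}\int_0^\infty\frac{e^{-ta}}{t^{1+s}}
	\left(\int_{\RN}\int_{\RN}g_t(x-y)\big(u(x)-u(y)\big)\big(v(x)-v(y)\big)\,\rd x\,\rd y\right)\rd t ,
\]
where the constant in front of $\int uv$ is $\frac{s}{\Gamma(1-s)}\int_0^\infty(1-e^{-ta})t^{-1-s}\,\rd t=a^s$. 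Specializing $v=v_0=(u-\zeta_2)_+$: since $t\mapsto(t-\zeta_2)_+$ is $1$-Lipschitz with $(t-\zeta_2)_+\le|t|$, one has $v_0\in H^s(\RN)$ and $\|\varphi_R v_0-v_0\|_{s,a}\to 0$ as $R\to\infty$ (exactly as in Lemma \ref{lemma:3.1}); moreover both terms on the right are nonnegative, because on $\{v_0>0\}=\{u>\zeta_2\}$ we have $uv_0=v_0^2+\zeta_2 v_0\ge v_0^2\ge 0$, and because $t\mapsto(t-\zeta_2)_+$ nondecreasing gives $\big(u(x)-u(y)\big)\big(v_0(x)-v_0(y)\big)\ge 0$ pointwise. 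Hence $\la u,v_0\ra_{s,a}\ge a^s\|v_0\|_{L^2(\RN)}^2$.

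Now I run the truncation argument. Testing the (extended) weak formulation of \eqref{eq:1.2} against $\varphi_R v_0$ gives $\la u,\varphi_R v_0\ra_{s,a}=\int_{\RN}f(u)\varphi_R v_0\,\rd x$; but $f(u(x))v_0(x)\equiv 0$ on $\RN$, since $v_0(x)>0$ forces $u(x)>\zeta_2$, where $f$ vanishes. Letting $R\to\infty$ and using $\varphi_R v_0\to v_0$ in $H^s(\RN)$ together with continuity of $\la\cdot,\cdot\ra_{s,a}$ yields $\la u,v_0\ra_{s,a}=0$, hence $a^s\|v_0\|_{L^2(\RN)}^2\le 0$; since $a>0$ this forces $v_0\equiv 0$, i.e.\ $u\le\zeta_2$ a.e.\ on $\RN$. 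Applying the same argument to $-u$ — which solves \eqref{eq:1.2} by \textup{(F0)}, with $f\equiv 0$ on $(-\infty,-\zeta_2]$ by oddness — gives $u\ge-\zeta_2$ a.e., so $\|u\|_{L^\infty(\RN)}\le\zeta_2$.

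The main obstacle is the derivation of the displayed representation of $\la\cdot,\cdot\ra_{s,a}$ and the justification of the interchange between the $t$-integral and the Fourier/real-space identities; this is where the technical bookkeeping lies, but after symmetrization all integrands in sight are nonnegative, so Tonelli's theorem makes the exchanges routine, and the bound $\int_0^\infty t^{-1-s}|1-e^{-t\lambda}|\,\rd t\le C_s\lambda^s$ furnishes the needed integrability. I also note that the positivity of $a$ is used essentially — it is precisely what makes the lower bound $a^s\|v_0\|_{L^2}^2$ nondegenerate; for $a=0$ one invokes Lemma \ref{lemma:3.1} instead, as already remarked before the statement.
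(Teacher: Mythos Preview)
Your proof is correct and follows the same overall scheme as the paper: write $\la u,v\ra_{s,a}$ as $a^s\int uv$ plus a nonlocal term with a nonnegative kernel, test against $v_0=(u-\zeta_2)_+$, and use the monotonicity of $t\mapsto(t-\zeta_2)_+$ together with $f(u)v_0\equiv 0$ to force $v_0\equiv 0$.

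The one genuine difference is the choice of kernel representation. The paper quotes Fall--Felli \cite[Proposition 6]{FF-15}, which gives
\[
\la u,v\ra_{s,a}=a^s\int_{\RN}uv\,\rd x+C_{N,s}\int_{\R^{2N}}\frac{(u(x)-u(y))(v(x)-v(y))}{|x-y|^{\frac{N+2s}{2}}}K_{\frac{N+2s}{2}}\bigl(\sqrt{a}\,|x-y|\bigr)\,\rd x\,\rd y,
\]
and uses the positivity of the modified Bessel function $K_\nu$ on $(0,\infty)$; it then obtains the slightly sharper inequality $\la u,v_0\ra_{s,a}\geq\|v_0\|_{s,a}^2$ (since $(u(x)-u(y))(v_0(x)-v_0(y))\geq|v_0(x)-v_0(y)|^2$). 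You instead derive the representation yourself from the subordination identity and the heat kernel, landing on the integrated form with kernel $\int_0^\infty t^{-1-s}e^{-ta}g_t(x-y)\,\rd t$, and retain only the $a^s\|v_0\|_{L^2}^2$ lower bound. The two kernels are of course the same object (performing the $t$-integral recovers the Bessel kernel), so neither route is more general; your version is more self-contained, while the paper's is shorter by outsourcing the computation to \cite{FF-15}. Either bound suffices, since $a>0$.
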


	\begin{proof}
From Fall and Felli \cite[Proposition 6]{FF-15}, it follows that 
	\[
		\begin{aligned}
			\la u , v \ra_{s,a} =  a^s \int_{\RN} uv \rd x + 
			C_{N,s} \int_{\R^{2N}} \frac{( u(x) - u(y) ) (v(x) - v(y) ) }{|x-y|^{\frac{N+2s}{2}} }
			K_{ \frac{N+2s}{2} } \left( \sqrt{a} |x-y| \right) \rd x \rd y
		\end{aligned}
	\]
where $C_{N,s} > 0$ and 
$K_\nu$ stands for the modified Bessel function of the second kind with the order $\nu$. 
As in Lemma \ref{lemma:3.1}, putting 
$v_0(x) := \max\{ \zeta_2 , u(x) \} - \zeta_2 =(u(x) - \zeta_2 )_+ \in H^s(\RN)$, 
we obtain 
	\[
		\begin{aligned}
			0 &= \int_{\RN} f(u) v_0 \rd x =  \la u ,  v_0 \ra_{s,a} 
		\end{aligned}
	\]
Noting $K_\nu (z) > 0$ for $z > 0$, as in Lemma \ref{lemma:3.1}, we have 
	\[
		\begin{aligned}
			0 = \la u , v_0 \ra_{s,a} 
			&= a^s \int_{\RN} u v_0 \rd x + C_{N,s} \int \frac{ (u(x) - u(y))  (v_0(x) - v_0(y ) ) }{|x-y|^{\frac{N+2s}{2}}} 
			K_{ \frac{N+2s}{2} } \left( \sqrt{a} |x-y| \right) \rd x \rd y
			\\
			& \geq a^s \int_{\RN} v_0^2 \rd x 
			+ C_{N,s} \int_{\R^{2N}}\frac{|v_0(x) -  v_0(y) |^2 }{|x-y|^{\frac{N+2s}{2}} }
			K_{ \frac{N+2s}{2} } \left( \sqrt{a} |x-y| \right) \rd x \rd y
			= \| v_0 \|_{s,a}^2.
		\end{aligned}
	\]
Hence, $v_0 \equiv 0$ in $\RN$ and $u \leq \zeta_2$ holds. 
Similarly we can prove $- \zeta_2 \leq u$ and Lemma \ref{lemma:4.2} holds. 
	\end{proof}

	From Lemma \ref{lemma:4.2}, for the case $a>0$, we may also assume (f2') without loss of generality 
as in section \ref{section:3}. 
Under (F0), (F1), (f2') and (F3), we define 
	\[
		I_a(u) := \frac{1}{2} \| u \|_{s,a}^2 - \int_{\RN} F(u) \rd x 
		\in C^1( H^s(\RN) , \R ).
	\]
Remark also that $T_\theta u(x) := u(e^{-\theta} x)$ and $J(\theta,u) :=I_a(T_\theta u)$ 
enjoy \eqref{T1} and \eqref{T2}. 
Next, we shall verify

	\begin{lemma}\label{lemma:4.3}
		\begin{enumerate}
			\item 
				For each $a \geq 0$, there exists a $w_{0,a} \in H^s_{\cO_1}$ such that 
				$I_a (w_{0,a}) < 0$. 
			\item
				For each $a \geq 0$, $I_a$ in $H^s_{\cO_2}$ satisfies the conditions \textup{(i)} and \textup{(ii)} in 
				Theorem \ref{theorem:2.4}. 
			\item
				For each $a \geq 0$ and $c \in \R$, $I_a$ in $H^s_{\cO_2}$ satisfies \PSPc. 
		\end{enumerate}
	\end{lemma}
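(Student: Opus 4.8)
The three assertions are, respectively, the $H^s_{\cO_i}$-analogues of the construction of the ``downhill'' direction in the proof of Theorem \ref{theorem:1.5} (i)--(ii), of the mountain-pass geometry in Lemma \ref{lemma:3.2}, and of the compactness statement Lemma \ref{lemma:3.4}. So the plan is to transcribe those arguments to the bilinear form $\la\cdot,\cdot\ra_{s,a}$ and the symmetry group $\cO_i$, the only genuinely new input being the dilation behaviour of $\|\cdot\|_{s,a}$.

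For (1), I would reuse the plateau functions from the proof of Theorem \ref{theorem:1.5} (i), but now with the level $\zeta_0$ coming from \textup{(F3)}: for large $R>1$ set $w_R\in H^1_{\cO_1}\subset H^s_{\cO_1}$ built from $\omega_R$ with $\omega_R\equiv\zeta_0$ on the bulk, so that the volume estimates \eqref{eq:4.4}--\eqref{eq:4.5}, now applied to $G(t):=F(t)-\tfrac{a^s}{2}t^2$ --- which is the positive constant $G(\zeta_0)$ on the bulk set $I_{R,2}$ and, by continuity, bounded on the transition sets whose volume is of strictly lower order since $m_1\geq 2$ --- give $\int_{\RN}G(w_R)\,\rd x\to+\infty$. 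The key scaling fact is: for $u$ of compact support and $u_\lambda(x):=u(x/\lambda)$, a change of variables on the Fourier side together with dominated convergence gives $\lambda^{-N}\|u_\lambda\|_{s,a}^2\to a^s\|u\|_{L^2(\RN)}^2$ as $\lambda\to\infty$, hence $\lambda^{-N}I_a(u_\lambda)\to-\int_{\RN}G(u)\,\rd x$. Choosing $R_0$ with $\int_{\RN}G(w_{R_0})\,\rd x>0$ and then $\lambda_0$ large, $w_{0,a}:=w_{R_0}(\cdot/\lambda_0)$ satisfies $I_a(w_{0,a})<0$; when $a=0$ this is exactly the dilation $w_{R_0}(e^{-\theta_0}\cdot)$ already used in Theorem \ref{theorem:1.5} (i).

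For (2)(i), from \textup{(F1)} and \textup{(f2')} one has $F(t)\leq\tfrac{a^s-\delta}{2}t^2+C|t|^{2^\ast_s}$ for all $t$ and some $\delta\in(0,a^s)$; since $(a+4\pi^2|\xi|^2)^s\geq a^s$ implies $(a+4\pi^2|\xi|^2)^s-(a^s-\delta)\geq\tfrac{\delta}{a^s}(a+4\pi^2|\xi|^2)^s$, this yields $\tfrac12\|u\|_{s,a}^2-\tfrac{a^s-\delta}{2}\|u\|_{L^2(\RN)}^2\geq\tfrac{\delta}{2a^s}\|u\|_{s,a}^2$, and with Sobolev's inequality and $\|u\|_{\cDs{\RN}}\leq\|u\|_{s,a}$ one gets $I_a(u)\geq\tfrac{\delta}{2a^s}\|u\|_{s,a}^2-C'\|u\|_{s,a}^{2^\ast_s}$, so a small $\rho_0>0$ works and moreover $I_a\geq 0$ on $\overline{B_{\rho_0}(0)}$. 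For (2)(ii), I would take the odd, compactly supported maps $\overline{\gamma}_k\in C(S^{k-1},H^1_{\cO_2})$ with $\int_{\RN}[F(\overline{\gamma}_k(\sigma))-\tfrac{a^s}{2}|\overline{\gamma}_k(\sigma)|^2]\,\rd x\geq 1$ (constructed from \textup{(F3)} exactly as the $w_R$ in part (1), or as in \cite[Lemma 4.2]{JL-18}), dilate $\gamma_k(\sigma):=\overline{\gamma}_k(\sigma)(\cdot/\lambda_k)$, and apply the asymptotic of part (1) uniformly over the compact set $\overline{\gamma}_k(S^{k-1})$ to arrange $\max_{\sigma\in S^{k-1}}I_a(\gamma_k(\sigma))<0$ for $\lambda_k$ large; combined with (i) this also gives $\gamma_k(S^{k-1})\subset H^s_{\cO_2}\setminus\overline{B_{\rho_0}(0)}$.

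For (3), the plan is to mimic Lemma \ref{lemma:3.4}. Given $(u_n)\subset H^s_{\cO_2}$ with $J(0,u_n)=I_a(u_n)\to c$ and $\|\rd J(0,u_n)\|_{\R\times X^\ast}\to0$, one has $\rd_\theta J(0,u_n)=P_a(u_n)\to0$, hence $NI_a(u_n)-P_a(u_n)=4\pi^2 s\int_{\RN}|\xi|^2(a+4\pi^2|\xi|^2)^{s-1}|\scrF u_n(\xi)|^2\,\rd\xi\to Nc$, which controls the homogeneous part of the norm; the remaining (low-frequency) $L^2$-mass must then be bounded using \textup{(F1)} and \textup{(f2')} (and the reduction via Lemma \ref{lemma:4.2} when $a>0$). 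Once $(u_n)$ is bounded I would pass to a weak limit $u_n\rightharpoonup u_0$ in $H^s_{\cO_2}$, use the compactness of the embedding $H^s_{\cO_2}\hookrightarrow L^p(\RN)$ for $2<p<2^\ast_s$ (as in \cite[Theorem 1.24]{W}), Strauss-type convergence and, for the critical tails, Lemma \ref{lemma:4.1}, to pass to the limit in $\int_{\RN}f(u_n)\varphi\,\rd x$ and obtain $\rd I_a(u_0)=0$; finally $\|u_n\|_{s,a}\to\|u_0\|_{s,a}$ (hence strong convergence) follows by Fatou's lemma applied to the signed part of the nonlinearity together with weak continuity of the remaining terms, the quadratic-at-the-origin contribution being absorbed by the coercivity estimate $\|u\|_{s,a}^2-(a^s-\delta)\|u\|_{L^2(\RN)}^2\geq\tfrac{\delta}{a^s}\|u\|_{s,a}^2$ from (2)(i). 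When $a=0$ the sequence is bounded directly as in Lemma \ref{lemma:3.4}. The main obstacle is precisely the boundedness/compactness step in (3): unlike in Lemma \ref{lemma:3.4}, the Pohozaev balance alone does not control the frequencies near $0$, so one must genuinely exploit the behaviour of $f$ near the origin and its subcritical growth; parts (1) and (2) are routine adaptations of arguments already present in the paper.
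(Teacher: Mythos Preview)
Your treatment of parts (i) and (ii) is essentially the paper's own: the same plateau construction with $G(t)=F(t)-\tfrac{a^s}{2}t^2$, the same dilation asymptotics $\lambda^{-N}\|u_\lambda\|_{s,a}^2\to a^s\|u\|_{L^2}^2$ (the paper invokes monotone convergence, you dominated convergence---either works), and the same $\ov{\gamma}_k$ from \cite[Lemma 4.2]{JL-18}. One cosmetic point: your coercivity inequality in (2)(i) is written for $a>0$; when $a=0$ condition (F1) gives $F(t)\le -\tfrac{\delta}{2}t^2+C|t|^{2^\ast_s}$ and the bound $I_0(u)\ge\tfrac12\|u\|_{\cDs{\RN}}^2+\tfrac{\delta}{2}\|u\|_{L^2}^2-C'\|u\|_{H^s}^{2^\ast_s}$ follows directly.

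The genuine gap is the boundedness step in (iii). You correctly observe that $NI_a(u_n)-P_a(u_n)=4\pi^2 s\int_{\RN}|\xi|^2(a+4\pi^2|\xi|^2)^{s-1}|\scrF u_n|^2\,\rd\xi$ stays bounded, but for $0<s<1$ this quantity does \emph{not} control $\|u_n\|_{\cDs{\RN}}^2$ on low frequencies (the quotient $(4\pi^2|\xi|^2)^s\big/[4\pi^2|\xi|^2(a+4\pi^2|\xi|^2)^{s-1}]$ blows up as $|\xi|\to 0$), so you lack a Sobolev or interpolation handle to close; the sentence ``the remaining $L^2$-mass must then be bounded using (F1) and (f2')'' is not an argument. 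The same issue arises for $a=0$: the Pohozaev identity bounds $\|u_n\|_{\cDs{\RN}}$ but not $\|u_n\|_{L^2}$, so ``directly as in Lemma \ref{lemma:3.4}'' is not enough in $H^s_{\cO_2}$. The paper does not attempt a direct bound here; it outsources the work to \cite[Theorem 7]{Amb-18-1} for $a=0$ and to \cite[Propositions 3.1--3.2]{I-17} for $a>0$. The latter is a rescaling contradiction: assume $\|u_n\|_{H^s}\to\infty$, show $\tau_n:=\|u_n\|_{L^2}^{-2/N}\to 0$, set $v_n(x):=u_n(\tau_n x)$, prove $(v_n)$ is bounded in $H^s$, and derive a contradiction via (F1) and (f2') after showing $\int(f(v_n)v_n-(1-\delta_0)a^s v_n^2)_+\,\rd x\to 0$. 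The paper then records the two modifications needed when replacing $H^s_{\rm rad}$ by $H^s_{\cO_2}$: a Schwarz-rearrangement argument to kill a possible two-dimensional limiting profile, and the convergence just mentioned via the compact embedding $H^s_{\cO_2}\hookrightarrow L^p(\RN)$. Once boundedness is in hand, your compactness argument (weak limit, compact embedding, Fatou) matches the paper's.
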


	\begin{proof}
(i) Write $G(t) := F(t) - a^s t^2/2$. By (F0) and (F3), as in the proof of Theorem \ref{theorem:1.5} (i), 
we may find $w_{R_0} \in H^s_{\cO_1}$ such that 
	\[
		\int_{  \RN } G( w_{R_0} ) \rd x \geq 1. 
	\]
Next, notice that 
	\[
		\begin{aligned}
			I_a ( w_{R_0} ( \theta^{-1} \cdot )  ) 
			&= \frac{1}{2} \| w_{R_0} ( \theta^{-1} \cdot ) \|_{s,a}^2 - \int_{  \RN } F(w_{R_0} ( \theta^{-1} x ) ) \rd x 
			\\
			&= \frac{1}{2} \left[ \| w_{R_0} ( \theta^{-1} \cdot ) \|_{s,a}^2 
			- a^s \| w_{R_0} ( \theta^{-1} \cdot ) \|_{L^2(\RN)}^2  \right] 
			- \theta^N \int_{  \RN } G(w_{R_0}) \rd x
			\\
			&\leq \frac{1}{2} \left[ \| w_{R_0} ( \theta^{-1} \cdot ) \|_{s,a}^2 
			- a^s \| w_{R_0} ( \theta^{-1} \cdot ) \|_{L^2(\RN)}^2  \right] 
			- \theta^N 
		\end{aligned}
	\]
and that 
	\[
		\begin{aligned}
			\| w_{R_0} ( \theta^{-1} \cdot ) \|_{s,a}^2 
			- a^s \| w_{R_0} ( \theta^{-1} \cdot ) \|_{L^2(\RN)}^2 
			&= \theta^{2N} \int_{  \RN } \left\{ \left( 4\pi^2 |\xi|^2 + a \right)^s - a^s \right\} 
			\left| \scrF w_{R_0} (  \theta \xi) \right|^2 \rd \xi
			\\
			&= \theta^N 
			\int_{  \RN } \left\{ \left( 4\pi^2 \theta^{-2} |\xi|^2 + a \right)^s - a^s \right\} 
			\left| \scrF w_{R_0} ( \xi ) \right|^2 \rd \xi.
		\end{aligned}
	\]
By the monotone convergence theorem, 
	\[
		\lim_{\theta \to \infty} \int_{  \RN } \left\{ \left( 4\pi^2 \theta^{-2} |\xi|^2 + a \right)^s - a^s \right\} 
		\left| \scrF w_{R_0} ( \xi ) \right|^2 \rd \xi = 0.
	\]
Hence, for sufficiently large $\theta_0>0$, we have $I_{a} ( w_{R_0} (\theta_0^{-1} \cdot )  ) < 0$.

(ii) When $a=0$, the claim follows from the existence of $\ov{\gamma}_k$ with \eqref{eq:4.6} and 
the argument in \cite[section 3]{Amb-18-1}. 
Similarly, when $a>0$, the claim may be verified by \eqref{eq:4.6} for $G(t) = F(t) - a^s t^2 /2$ and 
the proof of \cite[Lemma 2.3 (iii)]{I-17}.

(iii) When $a=0$, the assertion is essentially proved in \cite[Theorem 7]{Amb-18-1}. 
Indeed, the argument in \cite[Theorem 7]{Amb-18-1} works even though we replace $\Hsr (\RN)$ 
by $H^s_{\cO_2}$ since $H^s_{\cO_2} \subset L^p(\RN)$ is compact for $2 < p < 2^\ast_s$.

	When $a>0$, for $\Hsr (\RN)$, the claim is shown in \cite[Propositions 3.1 and 3.2]{I-17}. 
For $H^s_{\cO_2}$, we need a slight modification in 
\cite[Steps 2 and 3 in Proposition 3.1 and Proposition 3.2]{I-17}. 
For Step 2, 
arguing as in \cite[Steps 1 and 2 in Proposition 3.1]{I-17}, we may prove that 
$w_0 \in H^1_{\cO_2}$ satisfies 
	\[
		f(w_0) - a^s w_0 \equiv 0 \quad {\rm in} \ \R^2.
	\]

	Our aim is to show $w_0 \equiv 0$. 
By (F1), there exists an $t_1>0$ such that 
$f(t)t -a^s t^2 < 0$ if $0< |t| < t_1$. Hence, 
$\cL^2( [ t_0 < |w_0| < t_1  ] ) = 0$ for all $t_0 \in (0,t_1)$ 
where $\cL^2$ denotes the $2$-dimensional Lebesgue measure. 
Writing $w_0^\ast$ for a Schwarz rearrangement of $w_0$, 
we obtain $\cL^2 ( [t_0 < |w_0| < t_1] ) = \cL^2 ( [t_0 < w_0^\ast < t_1 ] ) = 0$ 
for each $t_0 \in (0,t_1)$. 
Since $w_0^\ast \in H^1_{\rm rad} (\R^2) \subset C(\R^2 \setminus \{0\})$ 
and $w_0^\ast (x) \to 0$ as $|x| \to \infty$, we infer that $w_0^\ast \equiv 0$ and 
$w_0 \equiv 0$ in $\R^2$.

	For Step 3 and \cite[Proposition 3.2]{I-17}, 
we need to show 
	\[
		\int_{  \RN } \left( f(u_n) u_n - (1-\delta_0) a^s u_n^2 \right)_+ \rd x \to 0
	\]
for each $(u_n) \subset H^s_{\cO_2}$ with $u_n \rightharpoonup 0$ weakly in $H^s_{\cO_2}$. 
This assertion may be checked by noting 
the compact embedding $H^s_{\cO_2} \subset L^p(\RN) $ for $2 < p < 2^\ast_s$ and 
that 
$( f(t) t - (1-\delta_0) a^s t^2 )_+ \leq C_\e |t|^{p_0} + \e |t|^{2^\ast_s}$ 
for each $t \in \R$ and $\e>0$ holds due to (F1) and (f2') 
where $p_0 \in (2,2^\ast)$ and $\delta_0>0$ is a small number.
Thus,  we can verify that \PSPc holds when $a > 0$. 
	\end{proof}

Finally, we prove Theorem \ref{theorem:1.5} (iii) and (iv). 

	\begin{proof}[Proof of Theorem \ref{theorem:1.5} (iii) and (iv)]
(iii) By Lemma \ref{lemma:4.3} (i) and Theorem \ref{theorem:2.3}, 
we may find $(u_n) \subset H^s_{\cO_1}$ such that 
$I_a(u_n) \to c_{mp} > 0$ and 
$\| \rd J(0,u_n) \|_{(\R \times H^s_{\cO_1})^\ast} \to 0$. 
When $a = 0$, from the proof of \cite[Theorem 7]{Amb-18-1}, 
we may prove that $(u_n)$ is bounded in $H^s(\RN)$.

	On the other hand, when $a>0$, we again borrow the argument 
from \cite[Proposition 3.1]{I-17} for the boundedness of $(u_n)$ with modifications. 
First, as in \cite[Proposition 3.1]{I-17}, assume $\| u_n \|_{H^s(\RN)} \to \infty$ 
for contradiction. Then we may show that $\tau_n := \| u_n \|_{L^2(\RN)}^{-2/N} \to 0$ 
and $v_n(x) := u_n( \tau_n x)$ is bounded in $H^s(\RN)$. 
Our next aim is to show $v_n(x+x_n) \rightharpoonup 0$ weakly in $H^s(\RN)$ 
for any $(x_n)_n \subset \{ 0 \} \times \{ 0 \} \times \R^{N-2m_1}$. 
This can be verified by the arguments in \cite[Steps 1 and 2 in Proposition 3.1]{I-17} 
together with the modification in the proof of Lemma \ref{lemma:4.3} (iii).

	By noting $m_1 \geq 2$, $v_n \in H^s_{\cO_1}$ and 
$v_n(x+x_n) \rightharpoonup 0$ weakly in $H^s(\RN)$ 
for each $(x_n)_n \subset \{0\} \times \{0\} \times \R^{N-2m_1}$, 
it follows that 
	\[
		\sup_{ z \in \Z^N} \| v_n \|_{L^2(z+Q)} \to 0. 
	\]
Hence, $ \| v_n \|_{L^p(\RN)} \to 0$ for each $2<p<2^\ast_s$. 
Since 
$( f(t)t - (1-\delta_0) a^s t^2 )_+ \leq C_\e |t|^{p_0} + \e |t|^{2^\ast_s}$ for all $t\in \R$ 
due to (F1) and (f2') for sufficiently small $\delta_0 > 0$, we obtain a contradiction:
	\[
		\begin{aligned}
			0< \delta_0  \leq \tau_n^N \| u_n \|_{s,a}^2 - a^s (1-\delta_0) \| v_n \|_{L^2(\RN)}^2 
			&= o(1) + \int_{ \RN} \left[ f(v_n) v_n - a^s (1-\delta_0) v_n^2 \right] \rd x 
			\\
			& \leq o(1) + \int_{ \RN} \left[ f(v_n) v_n - a^s (1-\delta_0) v_n^2 \right]_+ \rd x \to 0.
		\end{aligned}
	\]
Thus $(u_n)_n$ is also bounded in $H^s(\RN)$ when $a > 0$.

	Now, if  $\sup_{z \in \Z^N} \| u_n \|_{L^2(z+Q)} \to 0$, then again we have 
$\| u_n \|_{L^p(\RN)} \to 0$ for each $2 < p < 2^\ast_s$. 
By a similar argument and $\rd_u J(0,u_n)u_n \to 0$, we observe that 
for sufficiently small $\delta_0 > 0$, 
	\[
		\| u_n \|_{s,a}^2 -  a^s (1-\delta_0) \| u_n \|_{L^2(\RN)}^2 
		\leq \int_{  \RN } \left[ f(u_n) u_n - a^s (1-\delta_0) u_n^2 \right]_+ \rd x  + o(1)
		\to 0.
	\]
This yields $I_a(u_n) \to 0 = c_{mp}$, which is a contradiction. 
The rest of the argument is identical to that of Theorem \ref{theorem:1.5} (i) and 
we omit the details. 

(iv) 
By Lemma \ref{lemma:4.3}, we may apply Theorem \ref{theorem:2.4} for $I_a$ in $H^s_{\cO_2}$ 
to obtain $(u_k)_k \subset H^s_{\cO_2} $ such that 
	\[
		\begin{aligned}
			& I_a(u_k) \to \infty, \quad 
			\rd I_a(u_k) = 0, 
			\\
			& 0=P_a(u_k) = \frac{N-2s}{2} \| u_k \|_{s,a}^2 
			+ as \int_{\RN} (a+4\pi^2|\xi|^2)^{s-1} |\scrF u_k(\xi)|^2 \rd \xi - N \int_{\RN} F(u_k) \rd x.
		\end{aligned}
	\]
Remark that by $I_a(u_k) \to \infty$, we also have $\| u_k \|_{H^s} \to \infty$ 
since $I_a$ is a bounded map. 
Thus, we complete the proof. 
	\end{proof}

\subsection*{Acknowledgement}
The author would like to express his gratitude to Kazunaga Tanaka 
on fruitful discussion of the abstract results in section \ref{section:2}. 
This work was supported by JSPS KAKENHI Grant Number JP16K17623 
and JP17H02851.

\end{document}